\numberwithin{equation}{section}
\newtheorem{definition}{Definition}[section]
\newtheorem{rem}{Remark}[section]
\newtheorem{theorem}{Theorem}[section]
\newtheorem{lemma}{Lemma}[section]
\newtheorem{prop}{Proposition}[section]
\newcommand{\f}{\mbox{\boldmath$f$}}
\newcommand{\n}{\mbox{\boldmath$n$}}
\newcommand{\hf}{\frac{1}{2}}
\newcommand{\nrm}[1]{\left\| #1 \right\|}
\newcommand\bu{\boldsymbol{u}}
\newcommand\bv{\boldsymbol{v}}
\title{\Large Convergence analysis of a positivity-preserving 
numerical scheme for the Cahn-Hilliard-Stokes system with Flory-Huggins energy potential}       
\author{
Yunzhuo Guo \thanks{School of Mathematical Sciences, Beijing Normal University, Beijing 100875, P.R. China (
yunzguo@mail.bnu.edu.cn)}
\and
Cheng Wang\thanks{Department of Mathematics, The University of Massachusetts, North Dartmouth, MA  02747, USA (Corresponding Author: cwang1@umassd.edu)}
	\and
Steven M. Wise\thanks{Department of Mathematics, The University of Tennessee, Knoxville, TN 37996, USA (swise1@utk.edu)}
\and	
Zhengru Zhang\thanks{Laboratory of Mathematics and Complex Systems, Beijing Normal University, Beijing 100875, P.R. China (zrzhang@bnu.edu.cn)}
}
\date{ }
\begin{document}

\maketitle
\begin{abstract}
A finite difference numerical scheme is proposed and analyzed for the Cahn-Hilliard-Stokes system with Flory-Huggins energy functional. A convex splitting is applied to the chemical potential, which in turns leads to the implicit treatment for the singular logarithmic terms and the surface diffusion term, and an explicit update for the expansive concave term. The convective term for the phase variable, as well as the coupled term in the Stokes equation, are approximated in a semi-implicit manner. In the spatial discretization, the marker and cell (MAC) difference method is applied, which evaluates the velocity components, the pressure and the phase variable at different cell locations. Such an approach ensures the divergence-free feature of the discrete velocity, and this property plays an important role in the analysis. The positivity-preserving property and the unique solvability of the proposed numerical scheme are theoretically justified, utilizing the singular nature of the logarithmic term as the phase variable approaches the singular limit values. An unconditional energy stability analysis is standard, as an outcome of the convex-concave decomposition technique. A convergence analysis with accompanying error estimate is provided for the proposed numerical scheme. In particular, a higher order consistency analysis, accomplished by supplementary functions, is performed to ensure the separation properties of numerical solution. In turn, using the approach of rough and refined error (RRE) estimates, we are able to derive an optimal rate convergence. To conclude, several numerical experiments are presented to validate the theoretical analysis.

\bigskip

\noindent
{\bf Key words and phrases}:\,\, Cahn-Hilliard-Stokes system, logarithmic energy potential, convex splitting, positivity-preserving, energy stability, optimal rate convergence analysis

\noindent
{\bf AMS subject classification}:\,\, 35K35, 35K55, 49J40, 65M06, 65M12	
\end{abstract}


\section{Introduction}
The Cahn-Hilliard-Stokes (CHS) system, a gradient flow equation coupled with incompressible fluid motion, can be used to describe the phase separation and flow of a very viscous binary fluid\cite{diegel15a}. Let~$\Omega \in \mathbb{R}^d$, $d=2, 3$, be an open domain. The following CHS system with Flory-Huggins potential is considered:
	\begin{eqnarray} 
&&  \partial_t \phi + \nabla \cdot ( \phi \bu ) = \Delta \mu , 
	\label{equation-CHS-1} 
	\\
&& - \Delta \bu + \bu = - \nabla p - \gamma  \phi \nabla \mu , 
    \label{equation-CHS-2}   
	\\
&& \nabla \cdot \bu = 0 ,  
	\label{equation-CHS-3}   
	\\
&& \mu  = \delta_\phi E = \ln ( 1 + \phi ) - \ln ( 1 - \phi ) - \theta_0 \phi 
   - \varepsilon^2 \Delta \phi ,  
   	\label{equation-CHS-4}     
	\end{eqnarray}   
with no-flux and no-penetration free-slip boundary conditions,
	\[
\partial_{n} \phi=\partial_{n} \mu=0, \quad \boldsymbol{u} \cdot \boldsymbol{n} = \partial_n(\bu\cdot \boldsymbol{\tau}) = 0  \quad\quad\mbox{on} \ \partial \Omega \times(0, T].
	\]
In this system, $\phi$ is a binary fluid concentration, $\mu$, $p$ and $\bu$ describe the chemical potential, pressure and fluid velocity vector, respectively. The parameter $\gamma>0$ is related to surface tension.  Observe that equations \eqref{equation-CHS-1} -- \eqref{equation-CHS-4} correspond to a simplified version of a model studied by others, obtained by assuming that two fluids have the same densities, and the gravity effects may be ignored~\cite{Della_nonlocalCHHS_2018, lee02a}. 

For the fluid part of the physical system, the no-penetration boundary condition, $\boldsymbol{u} \cdot \boldsymbol{n} =0$ on $\partial \Omega$, is natural. Meanwhile, both the no-slip boundary condition, $\bu\cdot \boldsymbol{\tau} =0$, and free-slip boundary condition, $\partial_n(\bu\cdot \boldsymbol{\tau}) = 0$ (on $\partial \Omega$), are physically reasonable. On the other hand, the Stokes operator with the free-slip boundary condition is symmetric, due to the homogeneous Neumann boundary condition for the pressure field induced by this boundary condition. As a result, the analysis with free-slip boundary condition becomes simpler than the one with no-slip boundary condition. For simplicity of presentation, we only focus on the free-slip boundary condition for the velocity vector in this article, although the analysis could be similarly extended the one with no-slip boundary condition; the technical details are left to interested readers. 

For any $\phi \in H^1 (\Omega)$, with the point-wise bound $-1 < \phi < 1$, the Flory-Huggins free energy functional is given by 
	\begin{equation}
	\label{CH energy}
F(\phi)=\int_{\Omega}\left( ( 1+ \phi) \ln (1+\phi) + (1-\phi) \ln (1-\phi) - \frac{\theta_0}{2} \phi^2 +\frac{\varepsilon^2}{2}|\nabla \phi|^2\right) d {\bf x} ,
	\end{equation}
in which $\varepsilon$, $\theta_0$ are positive constants associated with the diffuse interface width.  The following dissipation property is valid for the energy functional \eqref{CH energy}: 
	\begin{equation}
\partial_t F(\phi)= -\|\nabla \mu\|^2-\frac{1}{\gamma}\left( \|\bu\|^2+\|\nabla \bu\|^2  \right). 
	\label{dissipation property}
	\end{equation}
It is clear that the logarithmic free energy functional has a singularity near the values of $\pm 1$, which poses a great challenge in the numerical design. As an alternate approach, a non-singular polynomial energy has also been widely used
	\begin{equation}
F(\phi) =  \int_{\Omega}\left(\frac{1}{4}\left(\phi^{2}-1\right)^{2}+\frac{\varepsilon^{2}}{2}|\nabla \phi|^{2}\right) d {\bf x} . 
	\label{polynomial energy-1}
	\end{equation}
Similar to \eqref{CH energy}, this model has a double-well potential, which can be regarded as a polynomial approximation to the original one, with a larger error in the actual physical situation~\cite{chen19a}. A finite element analysis of \eqref{equation-CHS-1}-\eqref{equation-CHS-2}, with an added time derivative in the Stokes equation and polynomial energy \eqref{polynomial energy-1} was reported in a recent paper~\cite{diegel15a}.

One can show that, for these particular flow boundary conditions, if the fields are sufficiently regular, it follows that
	\begin{align}
-\Delta p = \gamma\nabla\cdot\left(\phi\nabla\mu \right) \quad &\mbox{in} \ \Omega,
	\label{pressure-problem-1}
	\\
-\partial_n p = \gamma\phi \partial_n \mu = 0 \quad &\mbox{on} \ \partial\Omega.
	\label{pressure-problem-2}
	\end{align}
In short, one can separate the pressure and velocity calculations. Taking advantage of this property, let us define a Helmholtz-type projection as follows:
	\begin{equation}
{\cal P}_H:\left\{\f\in\left[H^1(\Omega)\right]^3 \ \middle| \ \f\cdot\n = 0 \ \mbox{on} \ \partial\Omega \right\} \to    \Big\{\bv\in \left[H^1(\Omega)\right]^3 \ \Big| \ \nabla\cdot \bv =  0 \ \mbox{in}  \ \Omega, \  \bv\cdot\n =  0 \ \mbox{on}  \ \partial\Omega\Big\},
	\end{equation}
where ${\cal P}_H(\f) := \f+\nabla p$, where $p\in \mathring{H}^2_N(\Omega)\cap H^1(\Omega)$ is the unique solution to  $-\Delta p = \nabla\cdot\f$ in $\Omega$, as in \eqref{pressure-problem-1} -- \eqref{pressure-problem-2}. Here
	\[
H_N^2(\Omega) := \left\{\phi\in H^2(\Omega) \ \middle| \ \partial_n\phi = 0 \ \mbox{on} \ \partial\Omega \ \right\} \quad \mbox{and} \quad  \mathring{H}_N^2(\Omega) := \left\{\phi\in H_N^2(\Omega) \ \middle| \ (\phi,1) = 0  \right\}. 
	\]
Clearly, 
	\[
\left({\cal P}_H(\f), \f-{\cal P}_H(\f)\right)_{L^2} = 0.
	\]
From this we can prove the $L^2$ stability of the projection. Of course, sufficiently regular solutions to the CHS system~\eqref{equation-CHS-1} -- \eqref{equation-CHS-4} satisfy 
	\[
- \Delta \bu + \bu = -\gamma{\cal P}_H\left(\phi\nabla\mu\right) , 
	\]
assuming the no flux, no-penetration, and free-slip boundary conditions. Thus, the Cahn-Hilliard-Stokes system can be reformulated to, effectively, remove the velocity:  
	\begin{equation}
	\begin{aligned}
& \phi_t + \nabla \cdot ( \bu \phi ) = \Delta \mu  ,      
	\\
& \bu = - {\cal S}^{-1} {\cal P}_H  \gamma  ( \phi \nabla \mu ),
   	\end{aligned}
	\label{equation-CHS-reformulate} 
	\end{equation} 
where
	\[
{\cal S} :=  - \Delta + I , 
	\]
with the appropriate boundary conditions. One can observe that equation~\eqref{equation-CHS-reformulate} is, in essence, a Cahn-Hilliard-type equation, a modified gradient flow.

For this PDE system, a positivity-preserving property, that is, $1 + \phi >0$ and $1 - \phi>0$, can be theoretically justified, due to the logarithmic terms appearing in $\mu$. Of course, the numerical analysis of  the Cahn-Hilliard equation, by itself,  is an interesting topic, and recent works have been devoted to that equation with an assumed Flory-Huggins potential: for example, the finite difference method~\cite{chen19a} and the finite element approach~\cite{barrett99, yang_CH_2017}. 

The question of energy stability has always been an essential issue for any numerical approximation to a gradient flow coupled with fluid motion, and some existing works has been reported~\cite{diegel17, han15, shen2015}. Meanwhile, most existing numerical efforts have been based on the polynomial approximation in the energy potential, so that singularities can be avoided with respect to the phase variable. For the Flory-Huggins energy potential~\eqref{CH energy} and the corresponding CHS system~\eqref{equation-CHS-1} -- \eqref{equation-CHS-4}, the preservation of both the point-wise positivity (for the logarithmic arguments) and the energy stability turns out to be a very challenging issue. This  comes from the highly nonlinear, singular, and coupled nature of the PDE system. In this work, a fully discrete finite difference scheme is proposed and analyzed for solving the CHS system with logarithmic Flory-Huggins potential. Four theoretical properties will be justified for the numerical scheme: positivity-preserving, unique solvability, unconditional energy stability (in the physical free energy), and optimal rate convergence. 

In more details, the numerical approximation to the chemical potential is based on the convex-concave decomposition of the Flory-Huggins energy functional, which dates back to Eyre~\cite{eyre98}. This approach ensures a theoretical justification of its positivity-preserving property, because of an implicit treatment of the nonlinear singular logarithmic term. In particular, the singular and convex nature of the logarithmic term prevents the numerical solution reaching the singular limit values, so that a point-wise positivity is preserved for the phase variable. See the related works~\cite{chen22a, chen19b, Dong2020b, dong19b, dong20a, duan22a, LiuC2021b, LiuC2021a, LiuC2022a, QianWangZhou_JCP20, Yuan2021a, ZhangJ2021} of the positivity-preserving analysis for various gradient flow models with singular energy potential. The linear expansive term is explicitly updated, for the sake of unique solvability, due to the negative eigenvalues involved. The surface diffusion term is implicitly treated, which comes from its convexity. Meanwhile, the other parts of the CHS system have to be handled very carefully, to ensure the desired theoretical properties. The convective term in the phase field dynamic equation is discretized in a semi-implicit way: explicit treatment for the phase variable and implicit treatment for the velocity vector. The static Stokes equation equation is implicitly computed, with the chemical potential determined by the convex splitting approach. The full numerical system turns out to be the gradient of a strictly convex energy functional, which in turn guarantees the unique solvability of the numerical solution. This symmetric feature represents a key difference between the current work and the related work in~\cite{chen22c}, in which the discretization of the Cahn-Hilliard-Navier-Stokes system leads to a non-symmetric numerical system, due to the fluid convection terms. As a result of the unique solvability and positivity-preserving property, the energy dissipation of the numerical scheme could be derived by a standard energy estimate.

In the present paper, an optimal rate convergence analysis and error estimate of the proposed numerical scheme are provided, which will be the first such result for the singular energy potential phase field model coupled with fluid motion. As illustrated by a few related existing works~\cite{chen19a, chen22b, chen16,  diegel17, liuY17} for the fluid-phase field coupled system with a polynomial approximation energy potential, the standard $\ell^\infty (0,T; \ell^2) \cap \ell^2 (0,T; H_h^2)$ error estimate does not work for the CHS system~\eqref{equation-CHS-1} -- \eqref{equation-CHS-4}, due to the lack of control for the highly nonlinear convection term. Instead, we  have to perform an $\ell^\infty (0,T; H_h^1) \cap \ell^2 (0,T; H_h^3 )$ error estimate, and such an estimate in a higher order Sobolev norm is necessary to make the error term associated with the nonlinear convection term have a non-positive inner product with the appropriate error test function. 

In addition to the positivity-preserving property, the  separation property of the numerical solution, i.e., a uniform distance between the numerical solution and the singular limit values (-1 and 1) is needed in the nonlinear error estimate. However, such a uniform bound is not directly available in any global-in-time analysis. To overcome this difficulty, a combination of rough and refined error (RRE) estimates must be applied. This RRE technique has been successfully applied to various nonlinear PDEs~\cite{duan22b, duan22a, duan20a, LiX2022b, LiX2022a, LiuC2021a}. In more details, a higher order asymptotic expansion, up to the second order temporal accuracy, is performed with a careful linearization technique. Such a higher order asymptotic expansion enables one to obtain a rough error estimate, so that the $\ell^\infty$ bound for the phase variable could be derived. This bound then plays a crucial role in the subsequent analysis. Namely, the refined error estimate is carried out to accomplish the desired convergence result. 

The rest of the paper is organized as follows. In Section 2, the standard finite difference spatial approximation is recalled. In Section 3, we propose the fully discrete finite difference scheme and establish the positivity-preserving property, unique solvability and unconditional energy stability. The convergence analysis of the numerical scheme, with first order temporal accuracy and second order spatial accuracy, is provided in Section 4. Some numerical experiments are presented in Section 5. Finally, some concluding remarks are given in Section 6.

	\section{The spatial discretization}

The standard centered finite difference spatial approximation is applied. We present the numerical approximation on the computational domain~$\Omega=(0, L_{x}) \times (0, L_{y})\times (0, L_{z})$~. The notation of two-dimensional domain could be naturally extended.  More relevant details and descriptions can be found in the related reference works~\cite{chen16, hu09, shen12, wise09a}. 

	\subsection{Basic definitions}

For simplicity, we consider $\Omega=(0, L_{x}) \times (0, L_{y})\times (0, L_{z})$, and assume that $h=L_{x}/N_{x}=L_{y}/N_{y}=L_{z}/N_{z}$, where $h$ is the spatial size, and  $N_x$, $N_y$, $N_z$ are given integers. We define the following:
	\begin{definition}
For any positive integer $N$, the following point sets are defined:
	\[
E_{N}:=\{i \cdot h | i=0, \ldots, N \}, \quad C_{N}:=\{(i-1 / 2) \cdot h | i=1, \ldots, N \},
	\]
	\[
C_{\bar{N}}:=\{(i-1 / 2) \cdot h | i=0, \ldots, N+1\}.
	\]
The two points belonging to $C_{\bar{N}} \setminus C_{N}$ are the so-called ghost points.

Define the function spaces
	\[
\mathcal{C}_{\Omega}:=\{\phi: C_{\bar{N}_{x}} \times C_{\bar{N}_{y}} \times C_{\bar{N}_{z}} \rightarrow \mathbb{R}\}, 
	\]
	\[
\mathcal{E}_{\Omega}^{x}:=\{\phi: E_{N_{x}} \times C_{N_{y}} \times C_{{N}_{z}} \rightarrow \mathbb{R}\},\quad
\mathcal{E}_{\Omega}^{y}:=\{\phi: C_{N_{x}} \times E_{N_{y}} \times E_{{N}_{z}} \rightarrow \mathbb{R}\}.
	\]
	\[
\mathcal{E}_{\Omega}^{z}:=\{\phi: C_{N_{x}} \times C_{N_{y}} \times E_{{N}_{z}} \rightarrow \mathbb{R}\}, \quad
\mathcal{E}_{\Omega}:=\mathcal{E}_{\Omega}^{x} \times \mathcal{E}_{\Omega}^{y}\times \mathcal{E}_{\Omega}^{z}.
	\]
	\end{definition}
The functions of $\mathcal{C}_{\Omega}$ are called cell-centered functions. In the component form, cell-centered functions are identified via~$\ \phi_{i,j,k}:=\phi(\xi_i, \xi_j, \xi_k)$, where~$\xi_i:=(i-\frac12)\cdot h$. The functions of $\mathcal{E}_{\Omega}^{x}$, etc., are called face-centered functions. In the component form, face-centered functions are identified via~$f_{i+\frac{1}{2},j,k}:= f(\xi_{i+\frac{1}{2}},\xi_j,\xi_k)$, etc.

The discrete boundary conditions, associated with cell-centered function and edge-centered function,  respectively, are proposed in following definition.
	\begin{definition}
A discrete function $\phi \in \mathcal{C}_{\Omega}$ is said to satisfy homogeneous Neumann boundary conditions, and we write $\boldsymbol{n} \cdot \nabla_{h} \phi=0$, iff $\phi$ satisfies
	\[
	\begin{array}{ll}
{\phi_{0,j,k}=\phi_{1,j,k},} & {\phi_{N_x,j,k}=\phi_{N_x+1,j,k},} 
	\\
{\phi_{i,0,k}=\phi_{i,1,k},} & {\phi_{i,N_y,k}=\phi_{i,N_y+1,k},} 
	\\
{\phi_{i,j,0}=\phi_{i,j,0},} & {\phi_{i,j,N_z}=\phi_{i,j,N_z+1}.}
	\end{array}
	\]
A discrete function $\boldsymbol{f}=(f^{x}, f^{y}, f^{z})^{T} \in \mathcal{E}_{\Omega}$ is said to satisfy no-penetration boundary conditions, $\boldsymbol{n}\cdot\boldsymbol{f}=0$, iff we have
	\[
	\begin{array}{ll}
{f_{1 / 2, j, k}^{x}=0,} & {f_{N_{x}+1 / 2, j,k}^{x}=0,} 
	\\
{f_{i, 1 / 2, k}^{y}=0,} & {f_{i, N_{y}+1 / 2,k}^{y}=0,} 
	\\
{f_{i, j, 1 / 2}^{z}=0,} & {f_{i, j,N_{k}+1 / 2}^{z}=0.} 
	\end{array}
	\]
\end{definition}

	\begin{definition}
	\label{free slip}
A discrete function $\boldsymbol{f}=(f^{x}, f^{y}, f^{z})^{T} \in \mathcal{E}_{\Omega}$ is said to satisfy free-slip boundary conditions iff we have
	\[
	\begin{array}{ll}
{f_{i+1 / 2, 0, k}^{x}=f_{i+1 / 2, 1, k}^{x},} & {f_{i+1 / 2, N_y+1, k}^{x}=f_{i+1 / 2, N_y, k}^{x},} \\
{f_{i+1 / 2, j, 0}^{x}=f_{i+1 / 2, j, 1}^{x},} & {f_{i+1 / 2, j, N_z+1}^{x}=f_{i+1 / 2, j, N_z}^{x},} \\
{f_{0, j+1/2, k}^{y}=f_{1, j+1/2, k}^{y},} & {f_{N_x+1, j+1/2, k}^{y}=f_{N_x, j+1/2, k}^{y},} \\
{f_{i, j+1/2, 0}^{y}=f_{i, j+1/2, 1}^{y},} & {f_{i, j+1/2, N_z+1}^{y}=f_{i, j+1/2, N_z}^{y},}\\
{f_{0, j, k+1/2}^{z}=f_{1, j, k+1/2}^{z},} & {f_{N_x+1, j, k+1/2}^{z}=f_{N_x, j, k+1/2}^{z},}\\
{f_{i, 0, k+1/2}^{z}=f_{i, 1, k+1/2}^{z},} & {f_{i, N_y+1, k+1/2}^{z}=f_{i, N_y, k+1/2}^{z}.}
	\end{array}
	\]
The two-dimensional notation is similar:
    \[
	\begin{array}{ll}
{f_{i+1/2, 0}^{x}=f_{i+1/2, 1}^{x},} & {f_{i+1/2, N_y+1}^{x}=f_{i+1/2, N_y}^{x},} \\
{f_{0, j+1/2}^{y}=f_{1, j+1/2}^{y},} & {f_{N_x+1, j+1/2}^{y}=f_{N_x, j+1/2}^{y}.}
	\end{array}
	\]
\end{definition}

	\subsection{Discrete operators, inner products, and norms}
The standard center difference operators are defined as follows:
	\begin{definition}
Define $d_{x}: \mathcal{E}_{\Omega}^{x} \rightarrow \mathcal{C}_{\Omega}$ component-wise via
	\[
d_{x} f_{i, j,k}:=\frac{1}{h}\left(f_{i+\frac{1}{2}, j,k}-f_{i-\frac{1}{2}, j,k}\right),
	\]
with $d_{y}: \mathcal{E}_{\Omega}^{y} \rightarrow \mathcal{C}_{\Omega} $ and $d_{z}: \mathcal{E}_{\Omega}^{z} \rightarrow \mathcal{C}_{\Omega} $ defined analogously. Then we have the discrete  divergence: 
	\[
\nabla_{h} \cdot: \mathcal{E}_{\Omega} \rightarrow \mathcal{C}_{\Omega}, \quad\quad \nabla_{h} \cdot \boldsymbol{f}:=d_{x} f^{x}+d_{y} f^{y}+d_{z} f^{z},
	\]
where $\boldsymbol{f}=(f^x, f^y,f^z)^T \in \mathcal{E}_{\Omega}$.

Define $A_{x}: \mathcal{C}_{\Omega} \rightarrow \mathcal{E}_{\Omega}^{x}$ component-wise via
	\[
A_{x} \phi_{i+\frac{1}{2}, j,k}:=\frac{1}{2}\left(\phi_{i+1, j,k}+\phi_{i, j,k}\right),
	\]
while $A_{y}: \mathcal{C}_{\Omega} \rightarrow \mathcal{E}_{\Omega}^{y}$ and $A_{z}: \mathcal{C}_{\Omega} \rightarrow \mathcal{E}_{\Omega}^{z}$ could be analogously introduced. Then we have a discrete average: 
	\[
A_{h}: \mathcal{C}_{\Omega} \rightarrow \mathcal{E}_{\Omega},\quad A_{h} \phi:=\left(A_{x} \phi, A_{y} \phi, A_{z} \phi \right)^{T} .
	\]

We define $D_{x}: \mathcal{C}_{\Omega} \rightarrow \mathcal{E}_{\Omega}^{x}$ component-wise via
	\[
D_{x} \phi_{i+\frac{1}{2}, j,k}:=\frac{1}{h}\left(\phi_{i+1, j,k}-\phi_{i, j,k}\right),
	\]
while $D_{y}: \mathcal{C}_{\Omega} \rightarrow \mathcal{E}_{\Omega}^{y}$ and $D_{z}: \mathcal{C}_{\Omega} \rightarrow \mathcal{E}_{\Omega}^{z}$ could be similarly introduced. The discrete gradient becomes 
	\[
\nabla_{h}: \mathcal{C}_{\Omega} \rightarrow \mathcal{E}_{\Omega},\quad\quad \nabla_{h} \phi:=\left(D_{x} \phi, D_{y} \phi, D_{z} \phi \right)^{T} .
	\]
The standard discrete Laplace operator is defined as
	\[
\Delta_{h}: \mathcal{C}_{\Omega} \rightarrow \mathcal{C}_{\Omega}, \quad\quad \Delta_{h} \phi:=\nabla_{h} \cdot \nabla_{h} \phi.
	\]	
	\end{definition}
	
	\begin{rem}
We can also define, in a straightforward way, the discrete Laplacian for face centered functions, $\Delta_h \mathcal{E}_\Omega^x \to \mathcal{E}_\Omega^x$, et cetera. For instance, if $g\in \mathcal{E}_\Omega^x$, then
	\[
\Delta_h g_{i+\hf,j,k} = \frac{g_{i+\frac{3}{2},j,k}+g_{i-\hf,j,k}+g_{i+\hf,j+1,k}+g_{i+\hf,j-1,k}+g_{i+\hf,j,k+1}+g_{i+\hf,j,k-1}  -6 g_{i+\hf,j,k} }{h^2}	,
	\]
and likewise for functions in $\mathcal{E}_\Omega^y$ and $\mathcal{E}_\Omega^z$.
	\end{rem}

Now we are ready to introduce the following grid inner products and norms.

	\begin{definition}
Define
	\[
(\phi, \psi):=h^3 \sum_{i=1}^{N_x} \sum_{j=1}^{N_y} \sum_{k=1}^{N_z} \phi_{i,j,k} \psi_{i,j,k}, \quad \forall \, \phi, \psi \in \mathcal{C}_{\Omega} , 
	\]
and
	\[
[f, g]_{x}:=\frac{1}{2} h^3 \sum_{i=1}^{N_x} \sum_{j=1}^{N_y} \sum_{k=1}^{N_z} (f_{i+\frac{1}{2}, j,k} g_{i+\frac{1}{2}, j,k}+f_{i-\frac{1}{2}, j,k} g_{i-\frac{1}{2}, j,k}), \quad \forall \, f,g \in \mathcal{E}_{\Omega}^x ,
	\]
with $[\cdot,\cdot]_y$ and $[\cdot,\cdot]_z$ formulated analogously.

For any $\boldsymbol{f}=(f^x,f^y,f^z)^T, \ \boldsymbol{g}=(g^x,g^y,g^z)^T \in \mathcal{E}_{\Omega}$, the discrete inner product becomes 
	\[
(\boldsymbol{f}, \boldsymbol{g}):=[f^{x}, g^{x}]_{x}+[f^{y}, g^{y}]_{y}+[f^{z}, g^{z}]_{z}.
	\]
	\end{definition}
	
	\begin{definition} 
For any $\boldsymbol{f}\in \mathcal{E}_{\Omega}$, we define the norm
	\[
\|\boldsymbol{f}\|_2 := \sqrt{(\boldsymbol{f},\boldsymbol{f})}.
	\]
In addition, for $\phi\in\mathcal{C}_{\Omega}$ we introduce the following norms:
	\[
\|\phi\|_{\infty}:=\max _{i, j,k}|\phi_{i, j,k}|,
	\]
	\[
\|\phi\|_p:=(|\phi|^p,1)^{\frac{1}{p}}, \quad 1\leq p < \infty,
	\]
	\[
\left\|\nabla_{h} \phi\right\|_{p}:=\left(\left[\left|D_{x} \phi\right|^{p}, 1\right]_{x}+\left[\left|D_{y} \phi\right|^{p}, 1\right]_{y}+\left[\left|D_{z} \phi\right|^{p}, 1\right]_{z}\right)^{\frac{1}{p}} , \quad 1\leq p < \infty.
	\]
	\end{definition}

Observe that $(\nabla_{h} \phi,\nabla_{h} \phi)=\|\nabla_{h} \phi\|_2^2$, for the case  $p=2$.

In addition, an $( \cdot , \cdot )_{-1,h}$ inner product and $\| \cdot \|_{-1, h}$ norm need to be introduced to facilitate the analysis in later sections. For any $\varphi  \in\mathring{\mathcal C}_{\Omega} := \left\{ f \in {\mathcal C}_{\Omega} \ \middle| \ ( f , 1 ) = 0 \right\}$, we define 
	\begin{equation} 
( \varphi_1, \varphi_2  )_{-1,h} = ( \varphi_1 ,  (-\Delta_h )^{-1} \varphi_2 ) , \quad \| \varphi  \|_{-1, h } = \sqrt{ ( \varphi ,  ( - \Delta_h )^{-1} (\varphi) ) } ,
	\end{equation} 
where the operator $\Delta_h$ is paired with discrete homogeneous Neumann boundary conditions.

We have the following Poincar\'{e}-type inequality:
	\begin{prop}
Suppose that $\Omega = (0,L)^3$, for simplicity. There is a constant $C>0$, independent of $h>0$, such that
	\[
\nrm{\phi}_2 \le C\nrm{\nabla_h\phi}_2,
	\]
for all $\phi\in \mathring{\mathcal C}_{\Omega} := \left\{ f \in {\mathcal C}_{\Omega} \ \middle| \ ( f , 1 ) = 0 \right\}$.
	\end{prop}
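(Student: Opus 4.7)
The plan is to prove the estimate via spectral analysis of the discrete Neumann Laplacian $-\Delta_h$, which is self-adjoint and positive semi-definite on $\mathcal{C}_{\Omega}$ with kernel equal to the constants. First I would verify the summation-by-parts identity
\[
(-\Delta_h \phi, \phi) = \|\nabla_h \phi\|_2^2,
\]
which follows by rearranging the sums over cell faces, with all boundary flux terms vanishing because $\phi$ is coupled with homogeneous Neumann ghost-cell extensions $\phi_{0,j,k} = \phi_{1,j,k}$, etc. It then suffices to show $(-\Delta_h \phi, \phi) \geq c_0 \|\phi\|_2^2$ with $c_0 > 0$ independent of $h$, for all mean-zero $\phi$.

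For this I would diagonalize $-\Delta_h$ explicitly. On the cube with $N = L/h$ cells per direction, the tensor-product discrete cosine modes
\[
\psi_{\mathbf{k}}(i,j,l) = \cos\frac{\pi k_1 (i-1/2)}{N}\cos\frac{\pi k_2 (j-1/2)}{N}\cos\frac{\pi k_3 (l-1/2)}{N}, \quad \mathbf{k} = (k_1,k_2,k_3) \in \{0,\ldots,N-1\}^3,
\]
form an orthogonal eigenbasis under $(\cdot,\cdot)$ that automatically respects the Neumann extension, with eigenvalues
\[
\lambda_{\mathbf{k}} = \frac{4}{h^2}\left(\sin^2 \frac{k_1 \pi}{2N} + \sin^2 \frac{k_2 \pi}{2N} + \sin^2 \frac{k_3 \pi}{2N}\right).
\]
The kernel consists of the $\mathbf{k} = \mathbf{0}$ mode, and the smallest nontrivial eigenvalue is $\lambda_{\ast} = \frac{4}{h^2}\sin^2\frac{\pi}{2N}$. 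Applying the elementary bound $\sin x \geq 2x/\pi$ on $[0,\pi/2]$ yields $\lambda_{\ast} \geq 4/(hN)^2 = 4/L^2$, which is independent of $h$.

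Finally, since $\phi \in \mathring{\mathcal{C}}_{\Omega}$ has zero mean, its expansion $\phi = \sum_{\mathbf{k} \neq \mathbf{0}} c_{\mathbf{k}} \psi_{\mathbf{k}}$ omits the constant mode, and Parseval's identity combined with the previous two steps gives
\[
\|\nabla_h \phi\|_2^2 = \sum_{\mathbf{k} \neq \mathbf{0}} \lambda_{\mathbf{k}} c_{\mathbf{k}}^2 \|\psi_{\mathbf{k}}\|_2^2 \geq \lambda_{\ast} \|\phi\|_2^2,
\]
so that $\|\phi\|_2 \leq (L/2)\|\nabla_h \phi\|_2$. The main obstacle is the bookkeeping for the cosine diagonalization, namely confirming $(\cdot,\cdot)$-orthogonality with the correct normalization for cell-centered functions and pinning down the uniform-in-$h$ lower bound on $\lambda_{\ast}$. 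A more elementary alternative is a telescoping/averaging argument: bound $\|\phi - \bar\phi^x\|_2$ by $C\|D_x \phi\|_2$ via the one-dimensional discrete Poincar\'e inequality applied line by line, then iterate the procedure in $y$ and $z$ on the successive mean-zero pieces, assembling the estimate dimension by dimension.
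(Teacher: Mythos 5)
The paper itself offers no proof of this proposition: it is quoted as a standard discrete Poincar\'e-type inequality (of the kind established in the cited finite-difference references), so there is no in-paper argument to compare against. Your spectral proof is correct and self-contained: the cell-centered cosine modes $\cos\bigl(\pi k (i-\tfrac12)/N\bigr)$ do satisfy the reflection (ghost-cell) identities $v_0=v_1$, $v_{N+1}=v_N$ automatically, they are $(\cdot,\cdot)$-orthogonal (DCT-II), the 1D eigenvalues $\tfrac{4}{h^2}\sin^2\tfrac{k\pi}{2N}$ tensorize as you state, and $\sin\tfrac{\pi}{2N}\ge \tfrac1N$ gives $\lambda_\ast \ge 4/L^2$, hence $\nrm{\phi}_2\le (L/2)\nrm{\nabla_h\phi}_2$ uniformly in $h$. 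The one point to make explicit is the role of the ghost values: the norm $\nrm{\nabla_h\phi}_2$ as defined in the paper includes the boundary faces $i=\tfrac12$, $i=N+\tfrac12$, etc., and the summation-by-parts identity $(-\Delta_h\phi,\phi)=\nrm{\nabla_h\phi}_2^2$ holds when $\phi$ carries the homogeneous Neumann extension; for arbitrary ghost values the boundary-face contributions are nonnegative, so the inequality only gets stronger, and your argument covers the general case once you note this. Your alternative dimension-by-dimension telescoping argument is also a legitimate, more elementary route (it needs the observation that differencing commutes with line-averaging so that $\|D_y\bar\phi^x\|\le \|D_y\phi\|$ when iterating), but the spectral version you carried out is already complete and gives the explicit constant.
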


	\subsection{Summation by parts formulas and a discrete Sobolev embedding}
	
For $\phi,\psi \in \mathcal{C}_{\Omega}$ and a velocity vector field $\boldsymbol{u} \in \mathcal{E}_{\Omega}$, the following summation by parts formulas can be derived through standard calculations.
	\begin{lemma}
Suppose $\phi,\psi \in \mathcal{C}_{\Omega}$ and velocity vector field $\boldsymbol{u} \in \mathcal{E}_{\Omega}$.  If $\psi$ satisfies the homogeneous Neumann boundary conditions $\boldsymbol{n} \cdot \nabla_{h} \phi=0$, then 
	\[
(\phi,\Delta_h\psi)=-(\nabla_h\phi,\nabla_h\psi).
	\]
If $\boldsymbol{u}\cdot\boldsymbol{n}=0$ on the boundary, we have 
	\[
(\phi, \nabla_h\cdot\boldsymbol{u})=-(\nabla_h\phi, \boldsymbol{u}).
	\]
	\end{lemma}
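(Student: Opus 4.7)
The plan is to reduce both identities to the same one-dimensional discrete Abel summation identity applied in each coordinate direction, with the boundary terms killed by the stated boundary conditions. The main point is to carefully track the factor of $\frac{1}{2}$ built into the edge-centered inner product $[\cdot,\cdot]_x$ and use it to absorb the boundary contributions cleanly.

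First I would prove the following core component-wise lemma: for $\phi\in\mathcal{C}_\Omega$ and $g\in\mathcal{E}_\Omega^x$ with $g_{1/2,j,k}=g_{N_x+1/2,j,k}=0$ for all $j,k$, we have $(\phi, d_x g) = -[D_x \phi, g]_x$. Expanding the definition,
\[
(\phi, d_x g) = h^2\sum_{j,k}\sum_{i=1}^{N_x}\phi_{i,j,k}\bigl(g_{i+\frac12,j,k}-g_{i-\frac12,j,k}\bigr),
\]
and applying Abel summation in $i$ yields the telescoping boundary terms $\phi_{N_x,j,k}\,g_{N_x+\frac12,j,k}-\phi_{1,j,k}\,g_{\frac12,j,k}$, which vanish by hypothesis on $g$, together with an interior sum
\[
-h^3\sum_{j,k}\sum_{i=1}^{N_x-1} g_{i+\frac12,j,k}\, D_x\phi_{i+\frac12,j,k}.
\]
On the other hand, directly reindexing in the definition of $[D_x\phi,g]_x$ produces
\[
[D_x\phi,g]_x = \tfrac{h^3}{2}\sum_{j,k}\!\bigl(D_x\phi_{\frac12,j,k}g_{\frac12,j,k}+D_x\phi_{N_x+\frac12,j,k}g_{N_x+\frac12,j,k}\bigr)+h^3\sum_{j,k}\sum_{i=1}^{N_x-1}D_x\phi_{i+\frac12,j,k}g_{i+\frac12,j,k},
\]
and again the edge terms drop because $g$ vanishes on the boundary edges. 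Matching the two expressions gives the claim. Analogous statements hold in $y$ and $z$.

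For the first identity, I apply the core lemma with $g=D_x\psi,\,D_y\psi,\,D_z\psi$ in turn. The hypothesis $\boldsymbol{n}\cdot\nabla_h\psi=0$ is precisely what makes $D_x\psi_{1/2,j,k}=\frac{1}{h}(\psi_{1,j,k}-\psi_{0,j,k})=0$, together with the analogous conditions at the other five boundary faces, so the boundary hypothesis of the core lemma is satisfied. Summing the three coordinate identities and invoking the definition of $\Delta_h=d_xD_x+d_yD_y+d_zD_z$ and of the edge inner product $(\nabla_h\phi,\nabla_h\psi)=[D_x\phi,D_x\psi]_x+[D_y\phi,D_y\psi]_y+[D_z\phi,D_z\psi]_z$ gives $(\phi,\Delta_h\psi)=-(\nabla_h\phi,\nabla_h\psi)$.

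For the second identity I again invoke the core lemma, this time with $g=u^x,\,u^y,\,u^z$; the no-penetration condition $\boldsymbol{u}\cdot\boldsymbol{n}=0$ directly supplies $u^x_{1/2,j,k}=u^x_{N_x+1/2,j,k}=0$ and its $y,z$ analogues. Summing yields $(\phi,\nabla_h\cdot\boldsymbol{u})=-(\nabla_h\phi,\boldsymbol{u})$. The only real obstacle is bookkeeping: one must be careful that the factor of $\frac{1}{2}$ in the definition of $[\cdot,\cdot]_x$ combined with the double-counting of interior edges produces a clean unit coefficient on the interior sum, so that the Abel summation identity matches exactly without a spurious factor. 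Once that accounting is done, the identities follow immediately.
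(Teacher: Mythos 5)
Your proposal is correct: the component-wise Abel summation identity, the cancellation of the boundary terms under the discrete Neumann (for $g=D_x\psi$, etc.) and no-penetration (for $g=u^x$, etc.) conditions, and the bookkeeping of the $\tfrac12$ factor in $[\cdot,\cdot]_x$ all check out. The paper itself omits the proof, asserting it follows from ``standard calculations,'' and your argument is precisely that standard summation-by-parts computation, so there is nothing to reconcile.
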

	
The following discrete Sobolev inequality has been derived in the existing works~\cite{guan17a, guan14a, wang11a}, for the discrete grid function with periodic boundary condition; an extension to the discrete homogeneous Neumann boundary condition can be made in a similar fashion.

\begin{lemma} \cite{guan17a, guan14a, wang11a} \label{lem: Sobolev-1}
	For a grid function $ f \in \mathcal{C}_{\Omega}$ satisfying the discrete homogeneous Neumann boundary condition,  we have the following discrete Sobolev inequality:
	\begin{align}
	\| f \|_4 \le  C	\| f \|_{H_h^1} , \quad \mbox{with} \, \, \, 
	\| f \|_{H_h^1}^2 : = \| f \|_2^2 + \| \nabla_h f \|_2^2 , \label{Sobolev-1}
	\end{align}
in which the positive constant $C$ only depends on the domain $\Omega$. 
\end{lemma}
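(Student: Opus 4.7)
My strategy is to reduce the Neumann case to the periodic case, which is exactly the content cited from \cite{guan17a, guan14a, wang11a}, by means of an even reflection across the boundary faces of $\Omega$.

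Concretely, I would construct an extended grid function $\tilde f$ on the doubled domain $\tilde \Omega = (0, 2L_x) \times (0, 2L_y) \times (0, 2L_z)$, subdivided into $2N_x \times 2N_y \times 2N_z$ cells with the same mesh size $h$, by reflecting $f$ evenly through each boundary face: for $1 \le i \le N_x$, set $\tilde f_{i,j,k} := f_{i,j,k}$, and for $N_x + 1 \le i \le 2N_x$, set $\tilde f_{i,j,k} := f_{2N_x + 1 - i, j, k}$, with analogous reflections in $y$ and $z$. One checks directly that $\tilde f$ satisfies periodic boundary conditions on $\tilde \Omega$ (this part is purely combinatorial and uses only that reflection is involutive, not the Neumann condition).

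Next, I would verify the scaling identities
\[
\|\tilde f\|_p^p = 2^3 \|f\|_p^p, \qquad \|\nabla_h \tilde f\|_2^2 = 2^3 \|\nabla_h f\|_2^2.
\]
The first is immediate. The second requires careful bookkeeping of weights: in the periodic norm on $\tilde \Omega$ each $x$-face is given full weight $h^3$, while in the Neumann norm on $\Omega$ the two boundary $x$-faces are given half-weight $\tfrac{1}{2} h^3$ by the definition of $[\cdot , \cdot]_x$. The homogeneous Neumann condition $f_{0,j,k} = f_{1,j,k}$ and $f_{N_x, j, k} = f_{N_x+1, j, k}$ makes the boundary face differences $D_x f_{1/2, j, k}$ and $D_x f_{N_x + 1/2, j, k}$ vanish, so those half-weighted terms contribute nothing; meanwhile the two reflection-fold $x$-faces introduced in $\tilde \Omega$ (at $x = L_x$ and at the periodic interface $x = 2L_x \equiv 0$) also have vanishing differences, one by construction of the reflection and the other by the same Neumann condition. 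Each remaining interior $x$-face of $\Omega$ appears once in the original copy and once in the reflected copy, producing the factor of $2$ in the $x$-direction; combined with the analogous $y$- and $z$-reflections this yields the full $2^3$ factor, and similarly $\|\tilde f\|_{H_h^1}^2 = 2^3 \|f\|_{H_h^1}^2$.

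Applying the periodic-case inequality to $\tilde f$ on $\tilde \Omega$ then gives $\|\tilde f\|_4 \le \tilde C \|\tilde f\|_{H_h^1}$ with $\tilde C > 0$ depending only on $\tilde \Omega$ (hence only on $\Omega$) and independent of $h$; substituting the scalings gives $2^{3/4} \|f\|_4 \le 2^{3/2} \tilde C \|f\|_{H_h^1}$, so the desired inequality holds with $C := 2^{3/4} \tilde C$. The two-dimensional version follows by the same argument on a doubled rectangle. The main technical obstacle, and the only place where the Neumann hypothesis is really used, is the weight bookkeeping for $\|\nabla_h \tilde f\|_2^2$ described above; everything else is a routine symmetry check.
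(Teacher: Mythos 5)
Your proposal is correct, and it takes a different route from the paper: the paper does not actually prove the lemma at all, but simply cites the periodic-case result of the referenced works and remarks that the extension to homogeneous Neumann boundary conditions ``can be made in a similar fashion,'' i.e.\ by re-running the argument of those references in the Neumann setting. You instead reduce the Neumann case to the cited periodic statement itself, by even reflection onto the doubled box, which yields a genuine, self-contained proof of the extension (modulo the periodic result) with explicit constant tracking $C = 2^{3/4}\tilde C$. Your weight bookkeeping is the right place to be careful and it checks out: with the paper's definition of $[\cdot,\cdot]_x$ the two boundary $x$-faces carry weight $\tfrac12 h^3$ but contribute nothing since $D_x f_{1/2,j,k}=D_x f_{N_x+1/2,j,k}=0$ under the Neumann condition, each interior face of $\Omega$ appears with full weight $h^3$ in both copies, and the fold and wrap faces of $\tilde\Omega$ have zero difference, giving $\|\nabla_h\tilde f\|_2^2 = 2^3\|\nabla_h f\|_2^2$ exactly. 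One small misattribution: the vanishing of the difference across the periodic interface $x=2L_x\equiv 0$ follows from the reflection itself (both neighboring values equal $f_{1,j,k}$), not from the Neumann condition; the Neumann condition is only needed to kill the half-weighted boundary-face terms in $\|\nabla_h f\|_2^2$. In fact, even without it one has the one-sided bound $\|\nabla_h\tilde f\|_2^2\le 2^3\|\nabla_h f\|_2^2$, which is all the argument requires, so your proof is if anything more robust than stated. The trade-off between the two approaches is clear: the paper's remark presumes the reader will adapt the interior estimates of the cited works to the Neumann grid, while your reflection argument avoids reopening those proofs at the cost of the (correctly handled) norm-scaling verification.
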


\section{The fully discrete numerical scheme}
For simplicity, we consider the cuboid $\Omega=(0,L)^3$ with $h=L/N$, for some $h>0$. Let $s=\frac{T}{M}>0$ be the time step size. The fully discrete scheme is proposed as follows: for $0\leq n \leq M-1$, given $\phi^n \in \mathcal{C}_{\Omega}$, find functions $(\phi^{n+1}, \mu^{n+1}, p^{n+1})\in [\mathcal{C}_{\Omega}]^3$, each satisfying the discrete homogeneous Neumann boundary conditions, and $\bu^{n+1}\in \mathcal{E}_\Omega$, satisfying discrete no-penetration and free-slip boundary conditions, such that
	\begin{eqnarray} 
  && 
  \phi^{n+1}-\phi^{n}=s \Delta_{h} \mu^{n+1}-s \nabla_{h} \cdot( A_h\phi^{n} \boldsymbol{u}^{n+1}),
  \label{discrete-CHS-0.1} 
\\
  && 
  \mu^{n+1}=\ln(1+\phi^{n+1})-\ln(1-\phi^{n+1})-\theta_0\phi^n-\varepsilon^{2} \Delta_{h} \phi^{n+1},
    \label{discrete-CHS-0.2}   
\\
  && 
  (-\Delta_h + I)\boldsymbol{u}^{n+1}+\nabla_h p^{n+1}+\gamma A_h\phi^n \nabla_h \mu^{n+1}=0 ,  
  \label{discrete-CHS-0.3}    \\
  &&  
  \nabla_h \cdot \boldsymbol{u}^{n+1}=0.  \label{discrete-CHS-0.4}
	\end{eqnarray}

	\subsection{Positivity-preserving property and unique solvability}
	
We begin this subsection with some preliminary definitions and results for the discrete version of the Stokes problem with no-penetration, free-slip boundary conditions.
	
	\begin{definition}
Suppose that $\Omega = (0,L)^3$ and $\f\in\mathcal{E}_\Omega$ satisfies discrete no-penetration boundary condition on $\partial\Omega$. Let $p\in\mathring{\mathcal{C}}_\Omega :=\{\phi \in \mathcal{C}_{\Omega} \ |\  (\phi,1 )=0\}$ be the unique solution to the problem
	\[
-\Delta_h p = \nabla_h\cdot \f 	,
	\]
subject to the discrete homogeneous Neumann boundary condition $\n \cdot \nabla_n p =0$. The discrete Helmholtz projection $\mathcal{P}_{H}^h:\left\{ \f\in \mathcal{E}_\Omega \ \middle| \ \f\cdot\n =0 \ \mbox{on} \ \partial\Omega \right\}\to \mathcal{E}_\Omega$ is defined as follows:
	\[
\mathcal{P}_{H}^h (\f) := \f+\nabla_h p .
	\]
	\end{definition}
	
The proof of the following facts are straightforward:
	\begin{lemma}
With the same assumptions as in the last definition, it follows that
	\[
\nabla_h \cdot 	\mathcal{P}_{H}^h (\f) = 0.
	\]
	\end{lemma}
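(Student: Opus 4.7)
My plan is to verify the statement by direct algebraic manipulation using the defining equation for $p$. Unfolding the definition $\mathcal{P}_H^h(\f) := \f + \nabla_h p$ and applying the discrete divergence operator gives, by linearity,
$$\nabla_h \cdot \mathcal{P}_H^h(\f) = \nabla_h \cdot \f + \nabla_h \cdot \nabla_h p.$$
Since the discrete Laplacian acting on cell-centered functions is, by the definition given in Section~2.2, precisely the composition $\Delta_h = \nabla_h \cdot \nabla_h$, and since the homogeneous Neumann boundary condition $\n \cdot \nabla_h p = 0$ forces $\nabla_h p$ to have vanishing normal component on $\partial \Omega$, the composition $\nabla_h \cdot \nabla_h p$ agrees with $\Delta_h p$ on every interior cell without any ghost-point ambiguity. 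Substituting the defining Poisson equation $-\Delta_h p = \nabla_h \cdot \f$ then produces immediate cancellation, so that $\nabla_h \cdot \mathcal{P}_H^h(\f) = 0$.

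Before invoking this computation I would briefly check the well-posedness of the pressure-type Poisson problem used to construct $p$. Using the summation-by-parts formula stated in Section~2.3 with $\phi \equiv 1$, together with the no-penetration hypothesis $\f \cdot \n = 0$ on $\partial \Omega$, one obtains $(\nabla_h \cdot \f, 1) = -(\nabla_h 1, \f) = 0$ (equivalently, a direct telescoping argument using the vanishing boundary face values of $\f$). This is exactly the discrete solvability (Fredholm) condition for the pure Neumann Laplace problem on the mean-zero subspace $\mathring{\mathcal C}_\Omega$, so a unique $p \in \mathring{\mathcal C}_\Omega$ satisfying $-\Delta_h p = \nabla_h \cdot \f$ with $\n \cdot \nabla_h p = 0$ exists, and hence $\mathcal{P}_H^h(\f)$ is legitimately defined.

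Honestly, I do not expect any genuine obstacle here: the projection $\mathcal{P}_H^h$ has been engineered exactly so that the divergence of $\f$ is subtracted off by the gradient correction $\nabla_h p$, and the identity being claimed is tautological once the defining Poisson problem is solvable. The only care needed is to ensure that the boundary treatment of $\nabla_h p$ at $\partial \Omega$ matches the Neumann ghost-point convention built into $\Delta_h$, which is automatic from the stated boundary condition on $p$.
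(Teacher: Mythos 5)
Your proof is correct and is exactly the direct computation the paper leaves as ``straightforward'': unfold $\mathcal{P}_H^h(\f)=\f+\nabla_h p$, use the definition $\Delta_h=\nabla_h\cdot\nabla_h$ (with the Neumann ghost-point convention), and cancel against $-\Delta_h p=\nabla_h\cdot\f$. Your additional verification of the discrete compatibility condition $(\nabla_h\cdot\f,1)=0$ via summation by parts, which justifies the solvability the paper simply asserts, is a correct and welcome bit of extra care.
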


	\begin{lemma}
Suppose that $\Omega = (0,L)^3$ and $\bu\in\mathcal{E}_\Omega$. Then
	\[
\nabla_h\cdot\left( \Delta_h\bu \right) = \Delta_h\left( \nabla_h \cdot \bu \right),
	\]
where the symbol $\Delta_h$ on the left is the discrete Laplacian whose domain is face-centered functions ($\mathcal{E}_\Omega^x$, $\mathcal{E}_\Omega^y$, and $\mathcal{E}_\Omega^z$) and the symbol $\Delta_h$ on the right is the discrete Laplacian whose domain is cell-centered functions ($\mathcal{C}_\Omega$).
	\end{lemma}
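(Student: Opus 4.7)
The plan is to reduce the identity to a componentwise verification. Writing $\bu = (u^x, u^y, u^z)^T$ and using the linearity of the cell-centered Laplacian, we have
\[
\nabla_h\cdot(\Delta_h \bu) = d_x(\Delta_h u^x) + d_y(\Delta_h u^y) + d_z(\Delta_h u^z), \qquad \Delta_h(\nabla_h\cdot\bu) = \Delta_h(d_x u^x) + \Delta_h(d_y u^y) + \Delta_h(d_z u^z),
\]
so it suffices to show the one-directional identity $d_\alpha(\Delta_h u^\alpha) = \Delta_h(d_\alpha u^\alpha)$ for each $\alpha \in \{x,y,z\}$ and then sum.

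For interior points, the identity follows by a direct computation. Each side can be expanded as a linear combination of shift operators acting on $u^\alpha$; since shifts commute, expanding $d_x(\Delta_h u^x)_{i,j,k}$ via the seven-point stencil from the remark and the two-point formula for $d_x$ produces exactly the same combination of face-centered values of $u^x$ as $\Delta_h(d_x u^x)_{i,j,k}$. I would carry out this expansion once for a generic interior index $(i,j,k)$ and observe the term-by-term cancellation/identification.

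The main obstacle is consistency at the boundary. The left-hand side requires ghost values of the face-centered function $u^x$ that enter into $\Delta_h u^x$ near $\partial\Omega$ (supplied via the free-slip conditions of Definition \ref{free slip} together with no-penetration), while the right-hand side requires ghost cell-centered values of $d_x u^x$ for the cell-centered $\Delta_h$ (supplied via homogeneous Neumann). I would verify face by face that these two ways of filling ghosts produce the same numerical value; for instance, on the face $x=0$, no-penetration gives $u^x_{1/2,j,k}=0$, so the term in $d_x(\Delta_h u^x)_{1,j,k}$ requiring $\Delta_h u^x_{1/2,j,k}$ reduces appropriately, and one checks that the Neumann extension $(d_x u^x)_{0,j,k} = (d_x u^x)_{1,j,k}$ gives the matching value on the other side. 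The tangential faces for $u^x$ are handled using the free-slip condition $u^x_{i+1/2,0,k}=u^x_{i+1/2,1,k}$, which reproduces exactly the Neumann extension of $d_x u^x$ in the corresponding tangential direction.

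Once the one-directional identity is established for all three components, summation yields the stated identity. The whole argument is essentially algebraic, with the only nontrivial content being the boundary-condition bookkeeping described above.
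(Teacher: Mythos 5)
The paper never actually proves this lemma --- it is listed among the facts whose proofs are declared ``straightforward'' --- so there is no argument to compare against line by line. Your reduction to the one-directional identity $d_\alpha(\Delta_h u^\alpha)=\Delta_h(d_\alpha u^\alpha)$ and the shift-commuting expansion at interior cells is exactly the standard computation the authors are appealing to, and that part is correct. Note, however, that the lemma as stated imposes no boundary conditions on $\bu$ at all, so everything you say about boundary-adjacent cells is really a statement about ghost-value conventions rather than a consequence of the lemma's hypotheses.

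That is also where your sketch does not quite close. At the cell $(1,j,k)$ the left-hand side needs $(\Delta_h u^x)_{1/2,j,k}$, whose $x$-part contains the ghost value $u^x_{-1/2,j,k}$; this is fixed by neither no-penetration ($u^x_{1/2,j,k}=0$) nor free-slip, which supplies only the tangential ghosts of $u^x$. Writing out the $x$-parts at $i=1$ with $u^x_{1/2,j,k}=0$, the left side is $h^{-3}\bigl(u^x_{5/2,j,k}-3u^x_{3/2,j,k}-u^x_{-1/2,j,k}\bigr)$, while the right side, with the Neumann extension $(d_xu^x)_{0,j,k}=(d_xu^x)_{1,j,k}$, is $h^{-3}\bigl(u^x_{5/2,j,k}-2u^x_{3/2,j,k}\bigr)$; these agree if and only if $u^x_{-1/2,j,k}=-u^x_{3/2,j,k}$, i.e.\ the odd reflection of the normal velocity across the boundary, which is precisely equivalent to the Neumann extension you invoke on the other side --- so ``reduces appropriately'' quietly assumes the very convention that needs to be verified or declared. (Your tangential checks are fine: the free-slip even extension of $u^y$ in $x$ does reproduce the Neumann extension of $d_y u^y$.) To make the boundary bookkeeping rigorous, either state the ghost conventions up front (odd extension of the normal component, free-slip even extension of tangential components, Neumann extension of the cell-centered argument of $\Delta_h$) and then run the face-by-face check, including edges and corners, or read the lemma as the paper evidently intends it: a pure stencil-commuting identity, valid wherever both sides are evaluated with a common, consistent filling of ghost values.
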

	
The proof of the following lemma uses standard facts about the MAC mesh points and  the previous few results.

	\begin{lemma}
Suppose that $\Omega = (0,L)^3$ and $\f\in\mathcal{E}_\Omega$ satisfies discrete no-penetration boundary conditions on $\partial\Omega$. Then the following two discrete problems are uniquely solvable and equivalent:
	\begin{enumerate}
	\item 
Find $\bu\in\mathcal{E}_\Omega$ that satisfies discrete no-penetration and discrete free-slip boundary conditions and $p\in\mathcal{C}_\Omega$ such that
	\begin{align*}
-\Delta_h \bu + \bu + \nabla_h p & = -\f ,
	\\
\nabla_h \cdot \bu & = 0.
	\end{align*}
	\item
Find $\bu\in\mathcal{E}_\Omega$ that satisfies discrete no-penetration and discrete free-slip boundary conditions such that
	\[
-\Delta_h \bu + \bu = -\mathcal{P}_H^h(\f).
	\]
	\end{enumerate}
	\end{lemma}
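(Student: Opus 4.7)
My plan is to prove that Problem 2 is uniquely solvable, then show its unique solution is automatically divergence-free and, together with an appropriate pressure, also solves Problem 1; the converse direction is then an application of the discrete Helmholtz projection.

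First I would establish solvability of Problem 2. The operator $-\Delta_h + I$, acting on face-centered fields satisfying discrete no-penetration and free-slip boundary conditions, is symmetric and positive definite: using face-centered summation by parts one has $((-\Delta_h + I)\bv,\bv) = \|\nabla_h \bv\|_2^2 + \|\bv\|_2^2 \ge \|\bv\|_2^2$. Hence, for every right-hand side, including $-\mathcal{P}_H^h(\f)$, there exists a unique $\bu \in \mathcal{E}_\Omega$ satisfying the free-slip/no-penetration boundary conditions.

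Next I would check the divergence-free property. Applying $\nabla_h \cdot$ to both sides of the equation in Problem 2, and invoking the commutation lemma $\nabla_h \cdot \Delta_h \bu = \Delta_h (\nabla_h \cdot \bu)$ together with $\nabla_h \cdot \mathcal{P}_H^h(\f) = 0$ proved just above, I obtain
\[
-\Delta_h (\nabla_h \cdot \bu) + \nabla_h \cdot \bu = 0.
\]
Standard MAC-mesh properties ensure that the cell-centered function $\nabla_h \cdot \bu$, induced by a field with no-penetration boundary conditions, satisfies discrete homogeneous Neumann boundary conditions. Testing the displayed equation against $\nabla_h \cdot \bu$ and using the summation-by-parts formula of the previous section yields $\|\nabla_h(\nabla_h \cdot \bu)\|_2^2 + \|\nabla_h \cdot \bu\|_2^2 = 0$, so $\nabla_h \cdot \bu \equiv 0$.

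To recover Problem 1, I would let $p \in \mathring{\mathcal{C}}_\Omega$ be the unique mean-zero cell-centered function (with homogeneous Neumann boundary condition) solving $-\Delta_h p = \nabla_h \cdot \f$; by the definition of $\mathcal{P}_H^h$ this gives $\mathcal{P}_H^h(\f) = \f + \nabla_h p$. Substituting into Problem 2 yields $-\Delta_h \bu + \bu + \nabla_h p = -\f$, which, combined with the divergence-free property, is Problem 1. For the converse, starting from any solution $(\bu,p)$ of Problem 1, taking $\nabla_h \cdot$ of the momentum equation and using the commutation lemma together with $\nabla_h \cdot \bu = 0$ gives $-\Delta_h p = \nabla_h \cdot \f$; this identifies $\nabla_h p = \mathcal{P}_H^h(\f) - \f$ up to an irrelevant additive constant in $p$, so Problem 1 collapses to Problem 2, and uniqueness of $\bu$ transfers back from Step 1.

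The main obstacle I expect is the boundary-condition bookkeeping in the divergence-free step: I must confirm that the MAC stencil is compatible enough with the free-slip/no-penetration conditions for $\bu$ that (i) the $\nabla_h\cdot$/$\Delta_h$ commutation genuinely holds across face- and cell-centered spaces, and (ii) the cell-centered field $\nabla_h\cdot\bu$ inherits a discrete homogeneous Neumann boundary condition, so that the Poincar\'e/energy argument can be closed. Both are standard consequences of the staggered MAC placement stated in Section 2, but they are precisely what make the equivalence of the two formulations work, and the rest of the argument is then direct.
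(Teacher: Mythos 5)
Your proposal is correct and follows essentially the same route the paper intends: the paper gives no detailed argument, stating only that the lemma follows from ``standard facts about the MAC mesh points and the previous few results,'' and those previous results (positive definiteness of $-\Delta_h+I$, the identity $\nabla_h\cdot\mathcal{P}_H^h(\f)=0$, and the commutation $\nabla_h\cdot\Delta_h\bu=\Delta_h(\nabla_h\cdot\bu)$) are exactly the ingredients you combine. The MAC boundary bookkeeping you flag (the divergence of a no-penetration, free-slip field inheriting discrete Neumann conditions) is precisely what the paper sweeps into ``standard facts,'' so your write-up is, if anything, more explicit than the original.
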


	\begin{lemma} 
	\label{lem: 3-1} 
For any $\phi^n \in \mathcal{C}_{\Omega}$, define a linear operator $\mathcal{L}_h: \mathring{\mathcal{C}}_{\Omega} \rightarrow \mathring{\mathcal{C}}_{\Omega}:=\{\phi \in \mathcal{C}_{\Omega} \ |\  (\phi,1 )=0\}$ via
	\begin{equation}
\mathcal{L}_h(\mu)=s \nabla_{h} \cdot( A_h\phi^{n} \boldsymbol{u}_\mu)-s \Delta_{h} \mu , 
	\label{Positivity-operator}
	\end{equation}
where $\boldsymbol{u}_\mu\in\mathcal{E}_\Omega$ is the unique vector grid function that satisfies discrete no-penetration and free-slip boundary conditions and the equation
	\begin{equation}
\mathcal{S}_h\bu_\mu = -\gamma \mathcal{P}_H^h(A_h\phi^{n} \nabla_h \mu),
	\label{discrete Stokes solver}
	\end{equation}
where $\mathcal{S}_h := -\Delta_h+I$. Then the following conclusions are valid: (i) For any $\phi \in \mathring{\mathcal{C}}_{\Omega}$, there is a unique $\mu \in \mathring{\mathcal{C}}_{\Omega}$ that satisfies $\mathcal{L}_h(\mu)=\phi$, and (ii) for any $\mu \in \mathring{\mathcal{C}}_{\Omega}$, we have $\|\mathcal{L}_h^{-1}(\mu)\|_\infty \leq Cs^{-1}h^{-3/2}\|\mu\|_\infty$. 
\end{lemma}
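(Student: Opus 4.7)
The strategy is to establish that $\mathcal{L}_h$ is a symmetric, positive-semidefinite operator on the finite-dimensional space $\mathring{\mathcal{C}}_{\Omega}$, which is in fact positive-definite. Since the domain and range are finite-dimensional vector spaces of the same dimension, injectivity will yield bijectivity, proving part (i). Then a standard inverse inequality will convert an $\ell^2$-type bound on $\mathcal{L}_h^{-1}(\mu)$ into the $\ell^\infty$ estimate required in part (ii).

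\textbf{Step 1: $\mathcal{L}_h$ maps $\mathring{\mathcal{C}}_\Omega$ into itself.} I would first verify that for $\mu\in\mathring{\mathcal{C}}_\Omega$ with homogeneous Neumann boundary condition, the function $\mathcal{L}_h(\mu)$ has zero mean. Pairing with $1$, the term $s(\Delta_h\mu,1)$ vanishes by summation by parts, and $(\nabla_h\cdot(A_h\phi^n\bu_\mu),1)$ vanishes because $\bu_\mu\cdot\boldsymbol{n}=0$ on $\partial\Omega$.

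\textbf{Step 2: Coercivity identity.} The key algebraic identity is obtained by testing the Stokes problem \eqref{discrete Stokes solver} against $\bu_\mu$ itself:
\[
\|\nabla_h\bu_\mu\|_2^2 + \|\bu_\mu\|_2^2 = -\gamma\bigl(\mathcal{P}_H^h(A_h\phi^n\nabla_h\mu),\bu_\mu\bigr).
\]
Since $\bu_\mu$ is divergence-free with $\bu_\mu\cdot\boldsymbol{n}=0$, summation by parts gives $(\nabla_h q,\bu_\mu)=0$ for any cell-centered $q$, so the Helmholtz projection drops out and the right-hand side equals $-\gamma(A_h\phi^n\nabla_h\mu,\bu_\mu)$. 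Viewing this as an inner product of face-centered fields, this equals $-\gamma(A_h\phi^n\bu_\mu,\nabla_h\mu)$. Substituting into
\[
(\mathcal{L}_h(\mu),\mu) = -s(A_h\phi^n\bu_\mu,\nabla_h\mu) + s\|\nabla_h\mu\|_2^2
\]
(obtained by summation by parts on both terms, using the boundary conditions) yields
\[
(\mathcal{L}_h(\mu),\mu) = s\|\nabla_h\mu\|_2^2 + \tfrac{s}{\gamma}\bigl(\|\nabla_h\bu_\mu\|_2^2 + \|\bu_\mu\|_2^2\bigr) \ge 0.
\]
If $(\mathcal{L}_h(\mu),\mu)=0$, then $\nabla_h\mu=\boldsymbol{0}$, which forces $\mu$ to be constant, and the zero-mean condition gives $\mu=0$. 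Hence $\mathcal{L}_h$ is injective, and by finite-dimensionality it is bijective, proving (i).

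\textbf{Step 3: Quantitative bound.} For (ii), given $\eta=\mathcal{L}_h^{-1}(\mu)$, the identity from Step 2 combined with Cauchy--Schwarz and the discrete Poincar\'e inequality yields
\[
s\|\nabla_h\eta\|_2^2 \le (\mathcal{L}_h(\eta),\eta) = (\mu,\eta) \le \|\mu\|_2\|\eta\|_2 \le C\|\mu\|_2\|\nabla_h\eta\|_2,
\]
so $\|\nabla_h\eta\|_2\le Cs^{-1}\|\mu\|_2$, and applying Poincar\'e once more gives $\|\eta\|_2\le Cs^{-1}\|\mu\|_2$. Finally, the standard inverse estimate $\|\eta\|_\infty\le Ch^{-3/2}\|\eta\|_2$ on the $N^3$-point cell-centered grid, together with the trivial bound $\|\mu\|_2\le|\Omega|^{1/2}\|\mu\|_\infty$, completes the proof:
\[
\|\mathcal{L}_h^{-1}(\mu)\|_\infty \le Ch^{-3/2}\|\eta\|_2 \le Cs^{-1}h^{-3/2}\|\mu\|_\infty.
\]

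\textbf{Anticipated obstacle.} The only non-routine step is Step 2, specifically justifying the cancellation of the Helmholtz projection inside the inner product. This requires checking compatibility of the face-centered inner product structure with the discrete divergence theorem and verifying that $A_h\phi^n\nabla_h\mu - \mathcal{P}_H^h(A_h\phi^n\nabla_h\mu) = -\nabla_h q$ for some cell-centered $q$, so that $(\nabla_h q,\bu_\mu)=-(q,\nabla_h\cdot\bu_\mu)=0$. Everything else is a routine application of summation by parts, Poincar\'e, and inverse inequality.
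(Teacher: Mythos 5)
Your proposal is correct and follows essentially the same route as the paper: the central coercivity identity $(\mathcal{L}_h(\mu),\mu)=s\|\nabla_h\mu\|_2^2+\tfrac{s}{\gamma}\bigl(\|\bu_\mu\|_2^2+\|\nabla_h\bu_\mu\|_2^2\bigr)$ (obtained by testing the discrete Stokes solve against $\bu_\mu$ and using that the pressure-gradient part of the Helmholtz projection is orthogonal to the divergence-free $\bu_\mu$), followed by the discrete Poincar\'e inequality to get $\|\mathcal{L}_h^{-1}(\mu)\|_2\le Cs^{-1}\|\mu\|_2$ and the 3-D inverse inequality plus $\|\mu\|_2\le|\Omega|^{1/2}\|\mu\|_\infty$ to conclude. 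The only cosmetic difference is that the paper additionally records the symmetry of $\mathcal{L}_h$ and phrases the $\ell^2$ bound through its smallest eigenvalue, whereas you argue invertibility from injectivity in finite dimensions and obtain the bound directly by Cauchy--Schwarz; both are equivalent.
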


	\begin{proof}
Clearly $\mathcal{L}_h$ is linear. Given $\mu_1, \mu_2 \in \mathring{\mathcal{C}}_{\Omega},$ a careful calculation reveals that
	\begin{equation}
	\begin{aligned}
(\mu_1, \mathcal{L}_h(\mu_2))&=(\mu_1, s \nabla_{h} \cdot( A_h\phi^{n} \boldsymbol{u}_{\mu_2})-s \Delta_{h} \mu_2) 
	\\
& =s(\nabla_h \mu_1, \nabla_h \mu_2)-s(A_h \phi^n \nabla_h \mu_1, \boldsymbol{u}_{\mu_2}) 
	\\
& =s(\nabla_h \mu_1, \nabla_h \mu_2)+\frac{s}{\gamma}({\cal S}_h \boldsymbol{u}_{\mu_1} ,\boldsymbol{u}_{\mu_2})
	\\
& =s(\nabla_h \mu_1, \nabla_h \mu_2)+\frac{s}{\gamma}(\boldsymbol{u}_{\mu_1}, \boldsymbol{u}_{\mu_2})+\frac{s}{\gamma}(\nabla_h \boldsymbol{u}_{\mu_1}, \nabla_h \boldsymbol{u}_{\mu_2}) , 
	\end{aligned}
	\label{positivity-1-1}
	\end{equation}
where summation by parts formulas have been repeatedly applied. We conclude that the operator is symmetric:
	\[
(\mu_1, \mathcal{L}_h(\mu_2))=(\mathcal{L}_h(\mu_1), \mu_2).
	\]
The expansion \eqref{positivity-1-1} implies that 
	\begin{equation}
(\mu, \mathcal{L}_h(\mu))=s\|\nabla_h \mu\|_2^2+\frac{s}{\gamma}\|\boldsymbol{u}_ \mu\|_2^2+\frac{s}{\gamma}\|\nabla_h \boldsymbol{u}_ \mu\|_2^2 \geq s\|\nabla_h \mu\|_2^2 \geq sC_1^2 \|\mu\|_2^2,
	\label{positivity-1-2}
	\end{equation}
where $C_1$ is the constant associated with the discrete Poincar\'e inequality. Thus $\mathcal{L}_h$ is SPD on the space $\mathring{\mathcal{C}}_\Omega$ and is, therefore, invertible.

Furthermore, equation \eqref{positivity-1-2} reveals that
	\[
\lambda_{min}(\mathcal{L}_h) \geq sC_1^2 \quad \mbox{and} \quad \lambda_{max}(\mathcal{L}^{-1}_h) \leq \frac{1}{sC_1^2},
	\]
where $\lambda_{min}, \lambda_{max}$ refers to the smallest and largest positive eigenvalues of a symmetric, positive definite operator. Then we get 
$$
  \|\mathcal{L}^{-1}_h(\mu)\|_2 \leq \frac{1}{sC_1^2} \|\mu\|_2, 
$$ 
for any $\mu \in \mathring{\mathcal{C}}_{\Omega}$. By the 3-D inverse inequality, the following result is obtained: 
	\[
\left\|\mathcal{L}_{h}^{-1}(\mu)\right\|_{\infty} \leq \frac{C_2\left\|\mathcal{L}_{h}^{-1}(\mu)\right\|_{2}}{h^{3/2}} \leq C_2s^{-1}C_{1}^{-2} h^{-3/2}\|\mu\|_{2} \leq C_2 s^{-1} C_{1}^{-2}|\Omega|^{\frac{1}{2}} h^{-3/2}\|\mu\|_{\infty}, 
	\]
where the last step comes from an obvious fact, $\|f\|_{2} \leq|\Omega|^{\frac{1}{2}}\|f\|_{\infty}$. The proof is complete.
\end{proof}

The positivity-preserving and unique solvability properties are established in the following theorem. 

	\begin{theorem}
Assume that  $\phi^n\in\mathcal{C}_\Omega$ is given, with $\| \phi^n \|_\infty \leq M$ and $-1< \overline{\phi^n} =:\beta < 1$. There exists a unique  solution $\phi^{n+1} \in \mathcal{C}_{\Omega}$ to \eqref{discrete-CHS-0.1} -- \eqref{discrete-CHS-0.4}, with $(\phi^{n+1}-\phi_0,1)=0$ and $\left\|\phi^{n+1}\right\|_{\infty}<1$.
	\end{theorem}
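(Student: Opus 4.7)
The plan is to reformulate the scheme \eqref{discrete-CHS-0.1}--\eqref{discrete-CHS-0.4} as the Euler--Lagrange equation of a strictly convex functional on a mass-constrained admissible set, and then exploit the singular nature of the logarithm to force the minimizer strictly into the interior. Using Lemma~\ref{lem: 3-1}, the Stokes subproblem \eqref{discrete-CHS-0.3}--\eqref{discrete-CHS-0.4} determines $\bu^{n+1}$ (and $p^{n+1}$, up to an additive constant) from $\mu^{n+1}$ via the discrete Helmholtz projection, so \eqref{discrete-CHS-0.1} collapses to $\mathcal{L}_h(\mu^{n+1}) = \phi^n - \phi^{n+1}$. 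Testing \eqref{discrete-CHS-0.1} against the constant $1$ and applying summation by parts with the stated boundary conditions yields mass conservation, so $\phi^{n+1} - \phi^n \in \mathring{\mathcal{C}}_\Omega$ and $\mu^{n+1} = -\mathcal{L}_h^{-1}(\phi^{n+1} - \phi^n)$ is well-defined up to an additive constant. Substituting this back into \eqref{discrete-CHS-0.2} exhibits the full scheme as the Euler--Lagrange equation, on
\[
A_h^\beta := \bigl\{ \phi \in \mathcal{C}_\Omega \ \big|\ (\phi - \phi^n, 1) = 0, \ -1 < \phi_{i,j,k} < 1 \bigr\},
\]
of the functional
\begin{align*}
\mathcal{J}(\phi) &:= \tfrac{1}{2}\bigl(\phi - \phi^n, \mathcal{L}_h^{-1}(\phi - \phi^n)\bigr) + \bigl((1+\phi)\ln(1+\phi) + (1-\phi)\ln(1-\phi), 1\bigr) \\
&\quad + \tfrac{\varepsilon^2}{2}\|\nabla_h \phi\|_2^2 - \theta_0 (\phi^n, \phi).
\end{align*}

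I would then establish existence and uniqueness of a minimizer. Since $x\ln x \to 0$ as $x \to 0^+$, $\mathcal{J}$ extends continuously to the compact closure $\overline{A_h^\beta}$ and so attains its infimum there. Strict convexity is verified term-by-term: the quadratic form $(\varphi, \mathcal{L}_h^{-1}\varphi)$ is SPD on $\mathring{\mathcal{C}}_\Omega$ by the eigenvalue estimate \eqref{positivity-1-2}; the entropy $(1+\phi)\ln(1+\phi) + (1-\phi)\ln(1-\phi)$ has strictly positive second derivative on $(-1,1)$; $\|\nabla_h \phi\|_2^2$ is convex; and the remaining contributions are linear in $\phi$. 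Strict convexity on the affine set $A_h^\beta$ yields uniqueness of the minimizer.

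The main obstacle is to exclude a minimizer on $\partial A_h^\beta$, i.e., to prove $\|\phi^{n+1}\|_\infty < 1$. The strategy is a contradiction argument driven by the blow-up of a one-sided directional derivative. Suppose the minimizer $\phi^\star$ satisfies $\phi^\star_{\alpha_0} = -1$ at some grid point $\alpha_0$. Since $\overline{\phi^\star} = \beta \in (-1,1)$, there exists a grid point $\alpha_1$ with $\phi^\star_{\alpha_1}$ strictly inside $(-1,1)$. Perturb along the mass-preserving direction $\psi := \mathbf{1}_{\alpha_0} - \mathbf{1}_{\alpha_1}$ and examine $\left.\tfrac{d}{dt}\mathcal{J}(\phi^\star + t\psi)\right|_{t=0^+}$. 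At $\alpha_0$ the entropy term contributes $h^3 \tfrac{d}{dt}[t\ln t]|_{t=0^+} = -\infty$, while every other contribution is finite: the $\mathcal{L}_h^{-1}$ piece contributes $(\psi, \mathcal{L}_h^{-1}(\phi^\star - \phi^n))$, which is bounded via the estimate $\|\mathcal{L}_h^{-1}(\cdot)\|_\infty \leq C s^{-1} h^{-3/2}\|\cdot\|_\infty$ of Lemma~\ref{lem: 3-1} together with $\|\phi^n\|_\infty \leq M$; the entropy at $\alpha_1$ is differentiable because $\phi^\star_{\alpha_1}$ is separated from $\pm 1$; and the gradient and linear terms are polynomial in $t$. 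Thus the one-sided derivative equals $-\infty$, so $\mathcal{J}$ strictly decreases along $\psi$, contradicting minimality. The symmetric case $\phi^\star_{\alpha_0} = 1$ is handled identically with the roles of the two logarithmic factors swapped. Here the sharpness of the logarithmic singularity, measured against a fixed (albeit $s, h$-dependent) bound on the $\mathcal{L}_h^{-1}$ piece, is essential.

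Finally, once the interior minimizer $\phi^{n+1}$ is obtained, $\mu^{n+1}$ is recovered from \eqref{discrete-CHS-0.2}, while $(\bu^{n+1}, p^{n+1})$ are reconstructed uniquely (modulo a constant in $p^{n+1}$) from the discrete MAC Stokes solvability result established above. Iterating mass conservation back to $n=0$ gives $(\phi^{n+1} - \phi_0, 1) = 0$, completing the proof.
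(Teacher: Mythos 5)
Your proposal follows essentially the same route as the paper: reduce the scheme through Lemma~\ref{lem: 3-1} to the minimization of a strictly convex functional over the mass-constrained admissible set, use the singular nature of the logarithm to exclude boundary minimizers, and get uniqueness from strict convexity. The only substantive difference is the device used at the boundary: the paper minimizes over the truncated compact set $\mathring{A}_{h,\delta}$ and shows, using the quantitative bound $\|\mathcal{L}_h^{-1}(\mu)\|_\infty \le Cs^{-1}h^{-3/2}\|\mu\|_\infty$ of Lemma~\ref{lem: 3-1}, that the directional derivative \eqref{positivity-2-3} becomes negative once $\delta$ is small enough, whereas you minimize over the full closure (extending the entropy by $0\ln 0=0$) and argue that a minimizer touching $\pm 1$ would have one-sided directional derivative $-\infty$. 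Both devices work, and your normalization of the functional (with the factor $\tfrac12$ on the $\mathcal{L}_h^{-1}$ quadratic term) is the one consistent with the Euler--Lagrange identification of the scheme; the paper's $\mathcal{J}$ omits that factor but uses the correct coefficient in \eqref{positivity-2-2}.

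One step, as written, is not justified: from $\overline{\phi^\star}=\beta\in(-1,1)$ you cannot conclude that some grid value lies strictly inside $(-1,1)$ --- a grid function taking only the values $\pm 1$ can have any mean in $(-1,1)$. The repair is easy. Choose $\alpha_1$ with $\phi^\star_{\alpha_1}>-1$ (such a point exists because the mean exceeds $-1$, and decreasing the value there keeps the perturbed function admissible for small $t$); if it happens that $\phi^\star_{\alpha_1}=1$, the entropy contribution at $\alpha_1$ coming from $(1-\phi)\ln(1-\phi)$ is $\frac{d}{dt}\left[t\ln t\right]\big|_{t=0^+}=-\infty$, which only strengthens, rather than spoils, the contradiction. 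Equivalently, take $\alpha_1$ to be a point where $\phi^\star$ attains its maximum, as the paper does, and treat the two singular cases symmetrically. With that patch the argument is complete.
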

	\begin{proof}
For any $\phi \in A_h:=\left\{\phi\in\mathcal{C}_\Omega\ \middle| \ \left\| \phi \right\|_\infty \leq 1, \ (\phi-\beta,1)=0\right\}$, define
	\[
\mathcal{J}(\phi):=(\mathcal{L}^{-1}_h(\phi-\phi^n),\phi-\phi^n)+(1+\phi,\ln(1+\phi))+(1-\phi,\ln(1-\phi))+\frac{\varepsilon^{2}}{2}\|\nabla_{h}\phi\|_2^2-\theta_0(\phi,\phi^n).
	\]
The solution of the numerical scheme is a minimizer of this discrete functional. Subsequently, we define	 
	\[
\mathcal{F}(\psi):=\mathcal{J}(\psi+\beta), \quad \forall \, \psi\in \mathring{A}_h:=\left\{ \psi\in\mathcal{C}_\Omega \ \middle| \ (\phi,1)=0, \ -1-\beta \leq \psi \leq 1-\beta \right\}.
	\]
It is clear that, if $\psi \in \mathring{A}_h$ minimizes $\mathcal{F}$, then $\psi+\beta \in A_h$ minimizes $\mathcal{J}$.

Next, let us define the following closed domain:
	\[
\mathring{A}_{h,\delta} := \left\{ \psi\in \mathcal{C}_\Omega \ \middle| \  (\psi,1)=0, \ -1-\beta+\delta \leq \psi \leq 1-\beta-\delta \right\},
	\]
where $\delta \in (0,1/2)$ and is sufficiently small. Since $\mathring{A}_{h,\delta}$ is a bounded, compact and convex set in $\mathring{\mathcal{C}}_{\Omega}$, there exists a (not necessarily unique) minimizer of $\mathcal{F}$ over $\mathring{A}_{h,\delta}$. The key point of the positivity analysis is that such a minimizer could not occur on the boundary of $\mathring{A}_{h,\delta}$, if $\delta$ is sufficiently small. To be more explicit, by the boundary of $\mathring{A}_{h,\delta}$, we mean the locus of points $\psi \in \mathring{A}_{h,\delta}$ such that $\left\|\psi+\beta\right\|_{\infty}=1-\delta$.

To get a contradiction, suppose that the minimizer of $\mathcal{F}$, call it $\phi^*$, occurs at a boundary point of $\mathring{A}_{h,\delta}$. There is at least one grid point $\vec{\alpha}_{0}=\left(i_{0}, j_{0}, k_{0}\right)$ such that $|\phi^*_{\vec{\alpha}_{0}}+\beta|=1-\delta$. First, we assume that $\phi^*_{\vec{\alpha}_{0}}+\beta=\delta-1$, so that the grid function $\phi^*$ has a global minimum at $\vec{\alpha}_{0}$. Suppose that $\phi^*$ achieves its maximum at $\vec{\alpha}_{1}=\left(i_{1}, j_{1}, k_{1}\right)$. By the fact that $\bar{\phi^*}=0$, we have $\phi^*_{\vec{\alpha}_{1}} \geq 0$ and then 
\begin{equation}
1-\delta \geq \varphi_{\vec{\alpha}_{1}}^{\star}+\beta \geq \beta.\label{positivity-2-1}
\end{equation}
Since $\mathcal{F}$ is smooth over $\mathring{A}_{h,\delta}$, for all $\psi \in \mathring{\mathcal{C}}_{\Omega}$, the directional derivative turns out to be 
\begin{equation}
\begin{aligned}
d_s\mathcal{F}(\phi^*+s\psi)|_{s=0}&=(\ln(1+\phi^*+\beta)-\ln(1-\phi^*-\beta),\psi)-(\theta_0\phi^n+\varepsilon^{2}\Delta_h\phi^*,\psi)\\
&+(\mathcal{L}^{-1}_h(\phi^*-\phi^n+\beta),\psi).
\end{aligned}
\label{positivity-2-2}
\end{equation}
Pick the direction $\psi$ as
$$
\psi_{i,j,k}=\delta_{i, i_{0}} \delta_{j, j_{0}}\delta_{k, k_{0}}-\delta_{i, i_{1}} \delta_{j, j_{1}}\delta_{k, k_{1}}.
$$
Then the derivative can be expressed as
	\begin{equation}
	\begin{aligned}
\frac{1}{h^3} d_s\mathcal{F}(\phi^*+s\psi)|_{s=0}=&\ln(1+\phi^*_{\vec{\alpha}_0}+\beta)-\ln(1-\phi^*_{\vec{\alpha}_0}-\beta)-\ln(1+\phi^*_{\vec{\alpha}_1}+\beta)+\ln(1-\phi^*_{\vec{\alpha}_1}-\beta)
	\\
&-\theta_0(\phi^n_{\vec{\alpha}_0}-\phi^n_{\vec{\alpha}_1})-\varepsilon^{2}(\Delta_h \phi^*_{\vec{\alpha}_0}-\Delta \phi^*_{\vec{\alpha}_1})
	\\
&+\mathcal{L}^{-1}_h(\phi^*-\phi^n+\beta)_{\vec{\alpha}_0}-\mathcal{L}^{-1}_h(\phi^*-\phi^n+\beta)_{\vec{\alpha}_1}.
	\end{aligned}
	\label{positivity-2-3}
	\end{equation}
By the fact that $\beta+\phi^*_{\vec{\alpha}_0}=-1+\delta$ and \eqref{positivity-2-1}, we have
	\begin{equation}
\ln(1+\phi^*_{\vec{\alpha}_0}+\beta)-\ln(1-\phi^*_{\vec{\alpha}_0}-\beta)-\ln(1+\phi^*_{\vec{\alpha}_1}+\beta)+\ln(1-\phi^*_{\vec{\alpha}_1}-\beta) \leq \ln\frac{\delta}{2-\delta}-\ln\frac{1+\beta}{1-\beta}.
	\label{positivity-2-4}
	\end{equation}
Since $\phi^*$ takes a minimum at the grid point $\vec{\alpha}_0$ and a maximum at the grid point $\vec{\alpha}_1$, it is obvious that 
	\begin{equation}
\Delta_h \phi^*_{\vec{\alpha}_0} \geq 0, \quad \Delta_h \phi^*_{\vec{\alpha}_1} \leq 0, \ \Longrightarrow -\varepsilon^{2}(\Delta_h \phi^*_{\vec{\alpha}_0}-\Delta \phi^*_{\vec{\alpha}_1}) \leq 0.
	\label{positivity-2-5}
	\end{equation}
By the assumption that $\| \phi^n \| \leq M$, the following inequality is straightforward: 
	\begin{equation}
-2M\leq \phi^n_{\vec{\alpha}_0}-\phi^n_{\vec{\alpha}_1} \leq 2M.\label{positivity-2-6}
	\end{equation}
Setting $\mu^*=\mathcal{L}^{-1}_h(\phi^*-\phi^n+\beta)$, we obtain 
	\begin{equation}
	\begin{aligned}
\| \mathcal{L}_h(\mu^*) \|_\infty &= s\|  \nabla_{h} \cdot ((1+\gamma(A_h\phi^n)^2) \nabla_{h} \mu^*)+ \nabla_{h} \cdot ( A_h\phi^n \nabla_{h} p_{\mu^*} ) \|_\infty
	\\
&=\| \phi^*-\phi^n+\beta \|_\infty 
	\\
& \leq M+1.
	\end{aligned}
	\label{positivity-2-7}
	\end{equation}
Therefore, an application of Lemma~\ref{lem: 3-1} implies that 
	\begin{equation}
\| \mathcal{L}^{-1}_h(\phi^*-\phi^n+\beta) \|_\infty=\| \mu^* \|_\infty \leq Cs^{-1}h^{-3/2}\|\mathcal{L}_h(\mu^*)\|_\infty \leq Cs^{-1}h^{-3/2}(M+1).
	\label{positivity-2-8}
	\end{equation}
A combination of \eqref{positivity-2-3} to \eqref{positivity-2-8} leads to  
	\begin{equation}
\frac{1}{h^3} d_s\mathcal{F}(\phi^*+s\psi)|_{s=0} \leq \ln\frac{\delta}{2-\delta}-\ln\frac{1+\beta}{1-\beta}+2\theta_0 M+2Cs^{-1}h^{-3/2}(M+1).
	\label{positivity-2-9}
	\end{equation}
Notice that right hand side of \eqref{positivity-2-9} is singular as $s,h \ \rightarrow 0$. Meanwhile, for any fixed $s, h>0$, we may choose $\delta \in (0, 1/2)$ sufficiently small so that
	\begin{equation}
d_s\mathcal{F}(\phi^*+s\psi)|_{s=0} < 0 . 
	\end{equation}
This contradicts the assumption that $\mathcal{F}$ has a minimum at $\phi^*$, since the directional derivative is negative in a direction pointing into the interior of $\mathring{A}_{h,\delta}$.

Using similar arguments, we can also prove that the global minimum of $\mathcal{F}$ over $\mathring{A}_{h,\delta}$ could not occur at a boudary point $\phi^*$ such that $\phi^*_{\vec{\alpha}_0}+\beta=1-\delta$. The details are left to interested readers.

A combination of these two facts reveals that the global minimum of $\mathcal{F}$ over $\mathring{A}_{h,\delta}$ could only possibly occur at interior point if $\delta$ sufficiently small. Therefore, there must be a solution $\phi+\beta \in A_h$ that minimizes $\mathcal{J}$ over $A_h$, which is equivalent to the numerical solution of \eqref{discrete-CHS-0.1} -- \eqref{discrete-CHS-0.4}. The existence of the numerical solution is established.

Finally, since $\mathcal{J}$ is a strictly convex function over $A_h$, the uniqueness analysis of numerical solution is straightforward.
	\end{proof}

	\subsection{Unconditional energy stability}
Now we establish an unconditional energy stability of the proposed numerical scheme. For any $\phi \in \mathcal{C}_{\Omega}$, its discrete energy is defined as
	\[
F_h(\phi) =\left( (1+\phi)\ln(1+\phi)+(1-\phi)\ln(1-\phi)-\frac{\theta_0}{2}\phi^2,1 \right)+\frac{\varepsilon^{2}}{2}\|\nabla_{h}\phi\|_2^2.
	\]
The following discrete energy dissipation result is valid.
	\begin{theorem} 
	\label{thm: ener stab} 
Numerical solutions of \eqref{discrete-CHS-0.1} -- \eqref{discrete-CHS-0.4} are unconditionally energy stable in the sense that  
	\begin{equation}
F_h(\phi^{n+1})-F_h(\phi^n)\leq -\frac{\varepsilon^{2}}{2}\|\nabla_{h}(\phi^{n+1}-\phi^n)\|_2^2-s\|\nabla_{h}\mu^{n+1}\|_2^2-\frac{s}{\gamma}(\|\boldsymbol{u}^{n+1}\|_2^2+\|\nabla_h \boldsymbol{u}^{n+1}\|_2^2) . \label{energy-0}
	\end{equation}
	\end{theorem}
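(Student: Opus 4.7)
The plan is to use the convex splitting structure of the energy together with the PDE-level dissipation identity, mimicking the continuous computation in \eqref{dissipation property}. I would split the discrete energy as $F_h = F_h^{c} + F_h^{e}$, where $F_h^{c}(\phi) := ((1+\phi)\ln(1+\phi)+(1-\phi)\ln(1-\phi),1)$ is convex (treated implicitly) and $F_h^{e}(\phi):=-\frac{\theta_0}{2}(\phi,\phi)$ is concave (treated explicitly), with the gradient part handled separately via an algebraic identity.

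First I would apply the one-sided convexity/concavity inequalities
\begin{align*}
F_h^{c}(\phi^{n+1})-F_h^{c}(\phi^{n}) &\leq \bigl(\ln(1+\phi^{n+1})-\ln(1-\phi^{n+1}),\, \phi^{n+1}-\phi^{n}\bigr), \\
F_h^{e}(\phi^{n+1})-F_h^{e}(\phi^{n}) &\leq \bigl(-\theta_0\phi^{n},\, \phi^{n+1}-\phi^{n}\bigr),
\end{align*}
and combine them with the identity
\[
\tfrac{\varepsilon^2}{2}\|\nabla_h\phi^{n+1}\|_2^2-\tfrac{\varepsilon^2}{2}\|\nabla_h\phi^{n}\|_2^2 = -\varepsilon^2(\Delta_h\phi^{n+1},\phi^{n+1}-\phi^{n}) - \tfrac{\varepsilon^2}{2}\|\nabla_h(\phi^{n+1}-\phi^{n})\|_2^2,
\]
obtained from summation by parts (the discrete Neumann boundary condition on $\phi^{n+1}$ justifies this). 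Recognizing the chemical potential \eqref{discrete-CHS-0.2} in the resulting right-hand side yields
\[
F_h(\phi^{n+1})-F_h(\phi^{n}) \leq (\mu^{n+1},\phi^{n+1}-\phi^{n}) - \tfrac{\varepsilon^2}{2}\|\nabla_h(\phi^{n+1}-\phi^{n})\|_2^2.
\]

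Next, I would test \eqref{discrete-CHS-0.1} against $\mu^{n+1}$. The diffusion term produces $-s\|\nabla_h\mu^{n+1}\|_2^2$ after summation by parts (the discrete Neumann condition on $\mu^{n+1}$ is used here), and the transport term becomes $s(A_h\phi^{n}\nabla_h\mu^{n+1},\bu^{n+1})$, using the no-penetration boundary condition on $\bu^{n+1}$ so that $A_h\phi^n\bu^{n+1}$ satisfies $\bn\cdot(\cdot)=0$ on $\partial\Omega$. Then I would test \eqref{discrete-CHS-0.3} against $\bu^{n+1}$: the Stokes operator generates $\|\bu^{n+1}\|_2^2+\|\nabla_h\bu^{n+1}\|_2^2$, and the pressure term $(\nabla_h p^{n+1},\bu^{n+1})=-(p^{n+1},\nabla_h\cdot\bu^{n+1})=0$ is eliminated by the discrete incompressibility \eqref{discrete-CHS-0.4} together with $\bu^{n+1}\cdot\bn=0$. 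This yields
\[
s(A_h\phi^{n}\nabla_h\mu^{n+1},\bu^{n+1}) = -\tfrac{s}{\gamma}\bigl(\|\bu^{n+1}\|_2^2+\|\nabla_h\bu^{n+1}\|_2^2\bigr).
\]
Substituting back gives \eqref{energy-0}.

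The main (and only) nontrivial point is the cancellation between the discrete convective flux in \eqref{discrete-CHS-0.1} and the coupling force in \eqref{discrete-CHS-0.3}. The common factor $A_h\phi^{n}$, together with the use of the MAC-aligned discrete gradient/divergence pair (so that the summation-by-parts identity $(\phi,\nabla_h\cdot\bu)=-(\nabla_h\phi,\bu)$ holds without boundary terms) and the exact discrete divergence-free property $\nabla_h\cdot\bu^{n+1}=0$, is what makes this cancellation exact. Verifying these three discrete structural ingredients is the only place where care is required; the remaining steps are standard convex-splitting manipulations.
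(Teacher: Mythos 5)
Your proposal is correct and follows essentially the same route as the paper: the convex--concave splitting inequalities combined with testing the phase equation against $\mu^{n+1}$ and eliminating the pressure via discrete incompressibility, which is exactly the paper's substitution of the Stokes relation $\gamma A_h\phi^n\nabla_h\mu^{n+1}=-({\cal S}_h\boldsymbol{u}^{n+1}+\nabla_h p^{n+1})$ tested against $\boldsymbol{u}^{n+1}$. The only cosmetic difference is that you plug the convexity bounds in before recognizing $\mu^{n+1}$, whereas the paper first tests \eqref{discrete-CHS-0.2} with $\phi^{n+1}-\phi^n$ and then invokes convexity; the content is identical.
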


	\begin{proof}
The following definitions are introduced for simplicity of presentation: 
	\begin{equation}
	\begin{aligned}
&
G(\phi) :=F_1(\phi)-F_2(\phi) , 
	\\
&
F_1(\phi):=(1+\phi)\ln(1+\phi)+(1-\phi)\ln(1-\phi),\quad F_2(\phi):=\frac{\theta_0}{2}\phi^2 , 
	\\
&
f_1 (\phi) := F_1' (\phi) =\ln(1+\phi)-\ln(1-\phi),\quad f_2 (\phi) := F_2' (\phi) =\theta_0\phi . 
	\end{aligned}
	\end{equation}
Taking an inner production with \eqref{discrete-CHS-0.1} by the chemical potential $\mu^{n+1}$ gives 
\begin{equation}
\begin{aligned}
\frac{1}{s}(\phi^{n+1}-\phi^{n},\mu^{n+1})&=(\Delta_h\mu^{n+1},\mu^{n+1})-(\nabla_{h}\cdot(A_h\phi^n\boldsymbol{u}^{n+1}),\mu^{n+1}) \\
& =-\|\nabla_{h}\mu^{n+1}\|_2^2-(\nabla_{h}\cdot(A_h\phi^n\boldsymbol{u}^{n+1}),\mu^{n+1}).
\end{aligned}\label{energy-1}
\end{equation}
Meanwhile, taking an inner production with \eqref{discrete-CHS-0.2} by $\phi^{n+1}-\phi^{n}$ yields 
\begin{equation}
\begin{aligned}
&(\phi^{n+1}-\phi^n,\mu^{n+1})\\
&=(f_1(\phi^{n+1})-f_2(\phi^n),\phi^{n+1}-\phi^n)-(\varepsilon^{2} \Delta_{h} \phi^{n+1},\phi^{n+1}-\phi^n) \\
&=(f_1(\phi^{n+1})-f_2(\phi^n),\phi^{n+1}-\phi^n)+\varepsilon^{2}(\nabla_{h}\phi^{n+1},\nabla_{h}(\phi^{n+1}-\phi^n))\\
&=(f_1(\phi^{n+1})-f_2(\phi^n),\phi^{n+1}-\phi^n)+\frac{\varepsilon^{2}}{2}(\|\nabla_{h}(\phi^{n+1}-\phi^n)\|_2^2+\|\nabla_{h}\phi^{n+1}\|_2^2-\|\nabla_{h}\phi^n\|_2^2).
\end{aligned}
\label{energy-2}
\end{equation}
A combination of \eqref{energy-1} and \eqref{energy-2} results in 
\begin{equation}
\begin{aligned}
(f_1(\phi^{n+1})-&f_2(\phi^n),\phi^{n+1}-\phi^n)+\frac{\varepsilon^{2}}{2}(\|\nabla_{h}(\phi^{n+1}-\phi^n)\|_2^2+\|\nabla_{h}\phi^{n+1}\|_2^2-\|\nabla_{h}\phi^n\|_2^2)\\
&+s\|\nabla_{h}\mu^{n+1}\|_2^2-s(A_h\phi^n\boldsymbol{u}^{n+1},\nabla_{h}\mu^{n+1})=0.
\end{aligned}
\label{energy-3}
\end{equation}
On the other hand, the convexity of $F_1$ and $F_2$ reveals the following inequalities: 
	\begin{equation}
F_{1}(\phi^{n+1})-F_{1}(\phi^{n})  \le f_{1}\left(\phi^{n+1}\right)\left(\phi^{n+1}-\phi^{n}\right) , 
\label{energy-4}
\end{equation}
\begin{equation}
F_{2}(\phi^{n+1})-F_{2}(\phi^{n}) \ge f_{2} (\phi^{n})\left(\phi^{n+1}-\phi^{n}\right) , 
\label{energy-5}
\end{equation}
which in turn lead to 
	\begin{equation}
(G(\phi^{n+1})-G(\phi^{n}), 1) \leq (f_{1}(\phi^{n+1})-f_{2}(\phi^{n}), \phi^{n+1}-\phi^{n}).
	\label{energy-6}
	\end{equation}
As a result, a combination of~\eqref{energy-1} and \eqref{energy-6} implies that 
	\begin{equation}
	\begin{aligned}
(G(\phi^{n+1})-G(\phi^{n}), 1)&+\frac{\varepsilon^{2}}{2}(\|\nabla_{h}(\phi^{n+1}-\phi^n)\|_2^2+\|\nabla_{h}\phi^{n+1}\|_2^2-\|\nabla_{h}\phi^n\|_2^2)
	\\
&+s\|\nabla_{h}\mu^{n+1}\|_2^2-s(A_h\phi^n\boldsymbol{u}^{n+1},\nabla_{h}\mu^{n+1}) 
	\\
& \leq 0.
	\end{aligned}
	\label{energy-7}
	\end{equation}
Finally, the following estimate could be derived: 
\begin{equation}
\begin{aligned}
F_h(\phi^{n+1})-F_h(\phi^{n})& \leq -\frac{\varepsilon^{2}}{2}\|\nabla_{h}(\phi^{n+1}-\phi^n)\|_2^2-s\|\nabla_{h}\mu^{n+1}\|_2^2+s(\boldsymbol{u}^{n+1},A_h\phi^n\nabla_{h}\mu^{n+1})
	\\
&=-\frac{\varepsilon^{2}}{2}\|\nabla_{h}(\phi^{n+1}-\phi^n)\|_2^2-s\|\nabla_{h}\mu^{n+1}\|_2^2-\frac{s}{\gamma}(\boldsymbol{u}^{n+1}, {\cal S}_h\boldsymbol{u}^{n+1}+\nabla_h p^{n+1})
	\\
&=-\frac{\varepsilon^{2}}{2}\|\nabla_{h}(\phi^{n+1}-\phi^n)\|_2^2-s\|\nabla_{h}\mu^{n+1}\|_2^2-\frac{s}{\gamma}(\|\boldsymbol{u}^{n+1}\|_2^2+\|\nabla_h\boldsymbol{u}^{n+1}\|_2^2) , 
	\end{aligned}
	\label{energy-8}
	\end{equation}
where summation by parts formulas have been repeatedly applied. This finishes the proof.
	\end{proof}

\section{Convergence analysis}
Now we proceed into the convergence analysis. Let $(\Phi, \boldsymbol{U}, P)$ be the exact PDE solution for the CHS system \eqref{equation-CHS-1} -- \eqref{equation-CHS-4}. With sufficiently regular initial data, it is reasonable to assume that the exact solution has regularity of class $\mathcal{R}$, where
\begin{equation}
\Phi \in \mathcal{R} := H^4 \left(0,T; C(\Omega)\right) \cap H^3 \left(0,T; C^2(\Omega)\right) \cap L^\infty \left(0,T; C^6(\Omega)\right).\label{regularity}
\end{equation}
In addition, we assume that the following separation property is valid for the exact solution:
\begin{equation} 
1 + \Phi  \ge \epsilon_0 ,\quad  1 - \Phi \ge \epsilon_0 ,  \quad \mbox{for some $\epsilon_0 > 0$, at a point-wise level} . \label{separation property}
\end{equation}  

Define $\Phi_N(\cdot,t)=\mathcal{P}_N\Phi(\cdot,t)$, $\boldsymbol{U}_N(\cdot,t)=\mathcal{P}_N\boldsymbol{U}(\cdot,t)$, $P_N(\cdot,t)=\mathcal{P}_NP(\cdot,t)$, the spatial Fourier projection of the exact solution into ${\cal B}^K$, the space of trigonometric polynomials of degree to and including  $K$ with $N=2K +1$. The following projection approximation is standard: if $(\Phi, \boldsymbol{U}, P) \in L^\infty(0,T;H^\ell_{\rm per}(\Omega))$, for any $\ell\in\mathbb{N}$ with $0 \le k \le \ell$, 
\begin{equation} 
\begin{aligned} 
  & 
\| \Phi_N - \Phi\|_{L^\infty(0,T;H^k)} \leq C h^{\ell-k} \| \Phi \|_{L^\infty(0,T;H^\ell)},  \, \,  \| \boldsymbol{U}_N - \boldsymbol{U} \|_{L^\infty(0,T;H^k)} \le C h^{\ell-k} \| \boldsymbol{U} \|_{L^\infty(0,T;H^\ell)} , 
\\
  & 
\| P_N - P \|_{L^\infty(0,T;H^k)} \le C h^{\ell-k} \| P \|_{L^\infty(0,T;H^\ell)} .  
\end{aligned} \label{Fourier-approximation}
\end{equation}
In fact, the Fourier projection estimate does not automatically preserve the positivity of $1 + \Phi_N$ and $1 - \Phi_N$; on the other hand, we could enforce the phase separation property that $1 + \Phi_N \ge \frac{1}{2} \epsilon_0$, $1 - \Phi_N \ge \frac{1}{2} \epsilon_0$,  if $h$ is taken sufficiently small. 

We denote $\Phi_N(\cdot, t_n)$ by $\Phi^n_N$. Since $\Phi^n_N\in {\cal B}^K$, the mass conservative property is available at the discrete level:
	\begin{equation} 
\overline{ \Phi_N^n} = \frac{1}{|\Omega|}\int_\Omega \, \Phi_N ( \cdot, t_n) \, d {\bf x} = \frac{1}{|\Omega|}\int_\Omega \, \Phi_N ( \cdot, t_{n-1}) \, d {\bf x} = \overline{ \Phi_N^{n-1}} ,\label{mass conservative_1}
	\end{equation}
for any $n\in \mathbb{N}$. On the other hand, the numerical solution \eqref{discrete-CHS-0.1} is also mass conservative at the discrete level:
\begin{equation} 
\overline{\phi^n} = \overline{\phi^{n-1}} ,  \, \, \, 
\overline{\phi^n} = \overline{\phi^{n-1}} , \quad \forall \ n \in \mathbb{N} . \label{mass conservative_2}
\end{equation}
In turn, the error grid function is defined as
\begin{equation}
e_\phi^n := \mathcal{P}_N \Phi_N^n - \phi^n,  \, \, \, 
e_{\boldsymbol{v}}^n := \mathcal{P}_N \boldsymbol{U}_N^n - \boldsymbol{u}^n , \, \, \, 
e_p^n := \mathcal{P}_N P_N^n - p^n , 
\quad \forall \ n \in N .
\label{error function-1}
\end{equation}
It follows that  $\overline{e_\phi^n} =0$, for any $n \in N$,  so that the discrete norm $\| \, \cdot \, \|_{-1,h}$ is well defined for the error grid function $e_\phi^n$.

The following theorem is the main result of this section.
\begin{theorem}\label{convergernce theorem}
Given initial data $\Phi(\cdot,t=0) \in C^6(\Omega)$, suppose the exact solution for CHS equation \eqref{equation-CHS-1} -- \eqref{equation-CHS-4} is of regularity class $\mathcal{R}$. Provided that $s$ and $h$ are sufficiently small and under a requirement $C_1h \leq s \leq C_2h$, we have
\begin{equation}
\|\nabla_h e_\phi^n\|_2+(s\sum^n_{k=1}\|\nabla_h \Delta_h e_\phi^k\|_2^2)^{\frac{1}{2}} 
 \leq C(s+h^2) , 
\label{convergernce theorem-1}
\end{equation}
for all positive integers $n$, such that $t_n=n \cdot s< T$, where $C>0$ is independent of $s$, $h$ and n. 
\end{theorem}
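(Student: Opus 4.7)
\medskip
\noindent\textbf{Proof proposal.} The plan is to follow the rough-and-refined error (RRE) strategy anticipated in the introduction. An optimal-rate estimate in the stronger norm $\ell^\infty(0,T;H^1_h)\cap\ell^2(0,T;H^3_h)$ is targeted because the only way to absorb the nonlinear convection term $\nabla_h\cdot(A_h\phi^n\bu^{n+1})$ via summation-by-parts is against a test function carrying one extra derivative; an $\ell^\infty(\ell^2)\cap\ell^2(H^2_h)$ estimate would not close.

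\emph{Step 1: higher-order consistency via supplementary functions.} Since the scheme is only first-order in time, the leading truncation at the Fourier-projected exact solution $(\Phi_N,\boldsymbol{U}_N,P_N)$ is $O(s+h^2)$, which is too weak to yield an $\ell^\infty$ bound on the raw error via an inverse inequality. I would construct temporal and spatial supplementary corrections $(\Phi_{s,1},\boldsymbol{U}_{s,1},P_{s,1})$ and $(\Phi_{h,1},\boldsymbol{U}_{h,1},P_{h,1})$ by Taylor-expanding the scheme about the exact solution, then solving a sequence of linear Cahn--Hilliard--Stokes-type problems (with zero mean) so that the augmented profile $\hat\Phi:=\Phi_N+s\Phi_{s,1}+h^2\Phi_{h,1}$, $\hat\boldsymbol{U}$, $\hat P$ satisfies \eqref{discrete-CHS-0.1}--\eqref{discrete-CHS-0.4} up to a local truncation error of order $O(s^2+h^3)$. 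Because the corrections are bounded in $L^\infty$ and $s,h$ are small, the separation property is inherited, $1\pm\hat\Phi\ge \tfrac12\epsilon_0$.

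\emph{Step 2: rough error estimate.} Set $\hat e_\phi^n:=\hat\Phi^n-\phi^n$, $\hat e_\boldsymbol{v}^n:=\hat\boldsymbol{U}^n-\bu^n$, $\hat e_\mu^n$, $\hat e_p^n$ analogously; each has zero mean and satisfies the corresponding discretized error equations with a higher-order truncation source. I would pair the mass-equation error with $(-\Delta_h)^{-1}\hat e_\phi^{n+1}$ and the $\mu$-equation error with $\hat e_\phi^{n+1}-\hat e_\phi^n$. The monotonicity of the logarithmic nonlinearity gives $(f_1(\hat\Phi^{n+1})-f_1(\phi^{n+1}),\hat e_\phi^{n+1})\ge 0$; the Stokes error, together with the Helmholtz projection structure established in Lemma 3.3 and the divergence-free identity $\nabla_h\cdot\bu^{n+1}=0$, produces a nonnegative dissipation $\gamma^{-1}(\|\hat e_\boldsymbol{v}^{n+1}\|_2^2+\|\nabla_h\hat e_\boldsymbol{v}^{n+1}\|_2^2)$. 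A standard discrete Gronwall argument then yields
\begin{equation*}
\|\hat e_\phi^n\|_{-1,h}^2+\varepsilon^2 s\sum_{k=1}^n\|\nabla_h\hat e_\phi^k\|_2^2\le C(s^2+h^3)^2.
\end{equation*}
The 3-D inverse inequality converts this into $\|\hat e_\phi^n\|_\infty\le C h^{-3/2}(s^2+h^3)\le C(s^{1/2}+h^{3/2})\to 0$ under the linear refinement path $C_1 h\le s\le C_2 h$. Combined with the separation of $\hat\Phi$, this proves the crucial \emph{separation property of the numerical solution}: $1\pm\phi^n\ge \tfrac14\epsilon_0$ for all $n$ with $t_n\le T$, uniformly in $s,h$.

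\emph{Step 3: refined error estimate.} Armed with the uniform separation, $f_1'(\xi)=\tfrac{1}{1+\xi}+\tfrac{1}{1-\xi}$ is pointwise bounded on any convex combination of $\phi^{n+1}$ and $\hat\Phi^{n+1}$, so $f_1(\hat\Phi^{n+1})-f_1(\phi^{n+1})=q^{n+1}\hat e_\phi^{n+1}$ with $0<q^{n+1}\le C_\ast$. I would now test the $\mu$-error equation against $-\Delta_h \hat e_\phi^{n+1}$ and pair this with the mass-error equation tested against $\Delta_h^2\hat e_\phi^{n+1}$ (equivalently $\mu$ against $(\hat e_\phi^{n+1}-\hat e_\phi^n)/s-\Delta_h^2\hat e_\phi^{n+1}$), producing a dissipation of the form $\varepsilon^2 s\|\nabla_h\Delta_h\hat e_\phi^{n+1}\|_2^2$ plus control of $\|\nabla_h\hat e_\phi^{n+1}\|_2^2$. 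The nonlinear convection error splits cleanly because of the semi-implicit choice: $A_h\phi^n\bu^{n+1}-A_h\hat\Phi^n\hat\boldsymbol{U}^{n+1}=A_h\hat\Phi^n\hat e_\boldsymbol{v}^{n+1}+A_h\hat e_\phi^n\bu^{n+1}$. Testing yields terms like $(A_h\hat\Phi^n\hat e_\boldsymbol{v}^{n+1},\nabla_h\Delta_h\hat e_\phi^{n+1})$, which are controlled by the $H^3_h$ dissipation together with the bound $\|\hat e_\boldsymbol{v}^{n+1}\|_{H^1_h}\le C\|A_h\phi^n\nabla_h\hat e_\mu^{n+1}\|_2+\text{truncation}$ derived by testing the Stokes error against $\hat e_\boldsymbol{v}^{n+1}$ and using ${\cal S}_h$-coercivity. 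The chemical-potential error in turn satisfies $\hat e_\mu^{n+1}=-\varepsilon^2\Delta_h\hat e_\phi^{n+1}+q^{n+1}\hat e_\phi^{n+1}-\theta_0\hat e_\phi^n+O(s^2+h^3)$, so all arising $\hat e_\mu$ contributions reduce to $\hat e_\phi$-norms already dominated by the dissipation. A discrete Gronwall argument in the norm $\|\nabla_h\hat e_\phi^n\|_2^2+\varepsilon^2 s\sum_{k\le n}\|\nabla_h\Delta_h\hat e_\phi^k\|_2^2$, followed by the triangle inequality $e_\phi^n=\hat e_\phi^n+s\Phi_{s,1}^n+h^2\Phi_{h,1}^n$, delivers \eqref{convergernce theorem-1}.

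\emph{Main obstacle.} The hardest step is closing the refined estimate in the presence of the nonlinear product $A_h\phi^n\bu^{n+1}$. Without the separation property \emph{and} the Helmholtz-projection identity $\mathcal{P}_H^h(A_h\phi^n\nabla_h\mu^{n+1})=-\gamma^{-1}\mathcal{S}_h\bu^{n+1}$, the convection error cannot be bounded by the available dissipation. The RRE framework succeeds precisely because the rough step guarantees separation before the refined step needs to differentiate the logarithm, and the condition $C_1 h\le s\le C_2 h$ permits the inverse-inequality step that converts the $H^1_h$ rough bound into the $L^\infty$ bound controlling the numerical phase away from $\pm 1$.
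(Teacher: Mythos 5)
Your overall architecture matches the paper's: higher-order consistency via correction functions, a rough estimate whose only purpose is to obtain an $\ell^\infty$ bound and hence the separation property of the numerical solution, and then a refined $\ell^\infty(H_h^1)\cap\ell^2(H_h^3)$ estimate obtained by testing with $-\Delta_h$ of the error, closed by Gronwall and a triangle inequality with the $O(s)$ correction. (Minor differences: the paper needs only a temporal correction $s\Phi_{\Delta t}$, since the Fourier projection already carries the spatial accuracy, and it uses a stream-function interpolation $\mathcal{PH}_h$ so that the constructed velocity is discretely divergence-free; your extra $h^2$-correction is harmless but unnecessary.)

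However, your rough estimate (Step 2) has genuine gaps. First, with the pairing you choose --- the mass-error equation against $(-\Delta_h)^{-1}\tilde\phi^{n+1}$ --- the claimed velocity dissipation $\gamma^{-1}(\|\tilde{\boldsymbol u}^{n+1}\|_2^2+\|\nabla_h\tilde{\boldsymbol u}^{n+1}\|_2^2)$ does not appear. That structure arises only when the phase-error equation is tested with the chemical-potential error $\tilde\mu^{n+1}$, because the Stokes error equation is forced by $A_h\phi^n\nabla_h\tilde\mu^{n+1}$; then $-(A_h\phi^n\nabla_h\tilde\mu^{n+1},\tilde{\boldsymbol u}^{n+1})$ can be rewritten through the Stokes error equation and the discrete divergence-free property to yield the coercive velocity terms (this is exactly \eqref{proof lemma 6} in the paper). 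In your pairing there is no $\|\nabla_h\tilde\mu^{n+1}\|_2$ dissipation at all, so the convection error term $A_h\phi^n\tilde{\boldsymbol u}^{n+1}$ cannot be closed. Second, even granting your weak-norm Gronwall bound, the conversion $\|\tilde\phi^n\|_\infty\le Ch^{-3/2}(s^2+h^3)$ is not justified: the factor $h^{-3/2}$ takes $\ell^2$ to $\ell^\infty$, while your bound at a fixed time level is only in $\|\cdot\|_{-1,h}$; from that norm the inverse inequality costs an extra $h^{-1}$ and the result does not tend to zero under $s\sim h$. (It could be repaired by extracting a single-step bound $\|\nabla_h\tilde\phi^n\|_2\le Cs^{-1/2}(s^2+h^3)$ from the $\ell^2(0,T;H_h^1)$ dissipation, but you do not do this.) The paper avoids both issues by testing with $\tilde\mu^{n+1}$, which gives a pointwise-in-time $H_h^1$ bound at the new level, at the price of the term $s^{-1}(\tilde\phi^n,\tilde\mu^{n+1})$; controlling that term forces the a-priori induction assumption $\|\tilde\phi^n\|_2+\|\nabla_h\tilde\phi^n\|_2\le s^{15/8}+h^{15/8}$, which is then recovered by the refined estimate at $t^{n+1}$ --- a two-level induction your proposal omits. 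Finally, in the refined step the pointwise factorization $f_1(\hat\Phi^{n+1})-f_1(\phi^{n+1})=q^{n+1}\tilde\phi^{n+1}$ is not enough: testing against $\nabla_h\Delta_h\tilde\phi^{n+1}$ requires a bound on $\|\nabla_h\mathcal{L}^{n+1}\|_2$, i.e.\ on the discrete gradient of $1/(1+\eta^{(n+1)})$, which the paper obtains (Lemma \ref{nonlinear error term}) from the rough $\ell^4$ bound plus an inverse inequality; this ingredient is missing from your sketch.
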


\subsection{Higher order truncation error estimate}
By consistency, the projection solution $(\Phi_N, \boldsymbol{U}_N, P_N)$ solves the discrete equations \eqref{discrete-CHS-0.1} -- \eqref{discrete-CHS-0.4} with a first order accuracy in time and second order accuracy in space. Meanwhile, it is observed that this leading local truncation error will not be sufficient to obtain an $\ell^\infty$ bound for the numerical solution to recover the separation property, as well as a $W_h^{1,4}$ bound to pass through the convergence estimate. To overcome this difficulty, we build a higher order consistency analysis via a perturbation term. In more details, we need to construct supplementary fields $\Phi_{\Delta t}$, $\boldsymbol{U}_{\Delta t}$, $P_{\Delta t}$ and define the following profiles
\begin{equation}
 \hat{\Phi} = \Phi_N + s \mathcal{P}_N \Phi_{\Delta t} , \, \,  \,  
  \hat{\boldsymbol{U}} = \mathcal{PH}_h ( \boldsymbol{U}_N + s \mathcal{P}_N \boldsymbol{U}_{\Delta t} ) ,   \, \, \,  
 \hat{P} = P_N + s \mathcal{P}_N P_{\Delta t} ,  
 \label{perturbation term}
\end{equation}
in which a special interpolation operator $\mathcal{PH}_h$, which will be introduced later, enforces the divergence-free condition at a discrete level. 

The following truncation error analysis for the temporal discretization can be obtained by using a straightforward Taylor expansion as well as estimate \eqref{Fourier-approximation} for the projection solution:
\begin{eqnarray}
&&
\frac{\Phi_N^{n+1}-\Phi_N^{n}}{s}=\Delta\mathcal{V}_N^{n+1} -\nabla\cdot(\Phi_N^{n}\boldsymbol{U}_N^{n+1})+s (G_{\phi}^{(0)})^n+O(s^2)+O(h^{m_0}),
\label{consistency-2.1}\\
&&
\mathcal{V}_N^{n+1}=\ln(1+\Phi_N^{n+1})-\ln(1-\Phi_N^{n+1})-\theta_0 \Phi_N^n-\varepsilon^{2}\Delta \Phi_N^{n+1},
\label{consistency-2.2}\\
&&
(-\Delta+I)\boldsymbol{U}_N^{n+1}=-\nabla P_N^{n+1}-\gamma \Phi_N^n \nabla\mathcal{V}_N^{n+1}+s (G_v^{(0)})^n+O(\Delta t^2)+O(h^{m_0}),\label{consistency-2.3}\\
&&
\nabla \cdot \boldsymbol{U}_N^{n+1}=0.\label{consistency-2.4}
\end{eqnarray}
Here $m_0\geq 4$ and $G_{\phi}^{(0)}$, $G_{v}^{(0)}$ can be assumed to be smooth enough in the sense that their derivatives are bounded.

The correction function $(\Phi_{\Delta t}, \boldsymbol{U}_{\Delta t}, P_{\Delta t})$ is given by solving the following equation:
\begin{eqnarray}
&&
\partial_t \Phi_{\Delta t}  = 
   - \nabla \cdot ( \Phi_N \boldsymbol{U}_{\Delta t} + \Phi_{\Delta t} \boldsymbol{U}_N ) 
 + \Delta \mathcal{V}_{\Delta t} - G_\phi^{(0)}  , 
     \label{consistency-3.1} \\
&&
  \mathcal{V}_{\Delta t} =  \frac{\Phi_{\Delta t} }{1 + \Phi_N } 
   + \frac{\Phi_{\Delta t} }{1 - \Phi_N }     
 - \theta_0 \Phi_{\Delta t} 
   - \varepsilon^2 \Delta \Phi_{\Delta t} ,    \label{consistency-3.2}  \\
&&
  (-\Delta+I)\boldsymbol{U}_{\Delta t} =  - \nabla P_{\Delta t} 
  - \gamma (  \Phi_N \nabla \mathcal{V}_{\Delta t} 
  + \Phi_{\Delta t} \nabla \mathcal{V}_N ) - G_v^{(0)} , 
   \quad 
   \nabla \cdot \boldsymbol{U}_{\Delta t} = 0 .  \label{consistency-3.3}  
\end{eqnarray}
Existence of a solution of the above linear, convection-diffusion type PDE is straightforward. Since the correction function depends only on the projection solution $(\Phi_N, \boldsymbol{U}_N, P_N)$ with enough regularity, the derivatives of $(\Phi_{\Delta t}, \boldsymbol{U}_{\Delta t}, P_{\Delta t})$ in various orders are bounded. Subsequently, an application of the semi-implicit discretization implies that
\begin{eqnarray}
&&
\frac{\Phi_{\Delta t}^{n+1}-\Phi_{\Delta t}^{n}}{s}=-\nabla\cdot(\Phi_N^n \boldsymbol{U}_{\Delta t}^{n+1} + \Phi_{\Delta t}^n \boldsymbol{U}_N^{n+1} ) 
 + \Delta \mathcal{V}^{n+1}_{\Delta t} - (G_\phi^{(0)})^n+O(s) , 
\label{consistency-4.1}\\
&&
\mathcal{V}_{\Delta t}^{n+1}=\frac{\Phi_{\Delta t}^{n+1} }{1 + \Phi_N^{n+1} } 
   + \frac{\Phi_{\Delta t}^{n+1} }{1 - \Phi_N^{n+1} }     
 - \theta_0 \Phi_{\Delta t}^n
   - \varepsilon^2 \Delta \Phi_{\Delta t}^{n+1} ,
\label{consistency-4.2}\\
&&
(-\Delta+I)\boldsymbol{U}_{\Delta t}^{n+1} =  - \nabla P_{\Delta t}^{n+1} 
  - \gamma (  \Phi_N^n \nabla \mathcal{V}_{\Delta t}^{n+1} 
  + \Phi_{\Delta t}^n \nabla \mathcal{V}_N^{n+1} ) - (G_v^{(0)})^n+O(s), \label{consistency-4.3}\\
&&
   \nabla \cdot \boldsymbol{U}_{\Delta t}^{n+1} = 0. \label{consistency-4.4}
\end{eqnarray}
Therefore, a combination of \eqref{consistency-2.1}-\eqref{consistency-2.4} and \eqref{consistency-4.1}-\eqref{consistency-4.4} leads to a second order temporal truncation error of $ \hat{\Phi}_1 = \Phi_N + s \mathcal{P}_N \Phi_{\Delta t},\ 
  \hat{\boldsymbol{U}}_1 =  \boldsymbol{U}_N + s \mathcal{P}_N \boldsymbol{U}_{\Delta t}, \   
 \hat{P}_1 = P_N + s \mathcal{P}_N P_{\Delta t}$:
\begin{eqnarray}
&&
\frac{\hat{\Phi}_1^{n+1}-\hat{\Phi}_1^n}{s}=-\nabla\cdot(\hat{\Phi}_1^n \hat{\boldsymbol{U}}_1^{n+1} + \hat{\Phi}_1^n \hat{\boldsymbol{U}}_1^{n+1} ) 
 + \Delta \hat{\mathcal{V}}^{n+1}_1 + O(s^2) , 
\label{consistency-5.1}\\
&&
\hat{\mathcal{V}}^{n+1}_1=\ln(1+\hat{\Phi}_1^{n+1})-\ln(1-\hat{\Phi}_1^{n+1})-\theta_0\hat{\Phi}_1^n-\varepsilon^2\Delta \hat{\Phi}^{n+1}_1,
\label{consistency-5.2}\\
&&
(-\Delta+I)\hat{\boldsymbol{U}}_1^{n+1} =  - \nabla \hat{P}_1^{n+1} 
  - \gamma (  \hat{\Phi}_1^n \nabla \hat{\mathcal{V}}^{n+1}_1)+O(s^2), 
   \quad 
   \nabla \cdot \hat{\boldsymbol{U}}_1^{n+1} = 0. \label{consistency-5.3}
\end{eqnarray}
In the derivation of \eqref{consistency-5.1}-\eqref{consistency-5.3}, the following linearized expansions have been utilized:
\begin{eqnarray}
&&
\ln ( 1 \pm \hat{\Phi}_1 )  = \ln ( 1 \pm \Phi_N \pm \hat{\Phi}_{\Delta t}  )  
	=  \ln ( 1 \pm \Phi_N ) + \frac{\hat{\Phi}_{\Delta t} }{  1 \pm \Phi_N } + O (s^2) , 
\label{consistency-6.1}\\
&&
\hat{\Phi}_1^n \hat{\boldsymbol{U}}_1^{n+1}  
   = \Phi_N^n \boldsymbol{U}_N^{n+1}  
   + s ( \Phi_{\Delta t}^n \boldsymbol{U}_N^{n+1}  
     + \Phi_N^n \boldsymbol{U}_{\Delta t}^{n+1} ) + O (s^2)  ,
\label{consistency-6.2}\\
&&
\hat{\Phi}_1^n \nabla \widehat{\mathcal{V}}_1^{n+1}   
   = \Phi_N^n \nabla \mathcal{V}_N^{n+1}  
   + s ( \Phi_{\Delta t}^n \mathcal{V}_N^{n+1}     
     + \Phi_N^n \mathcal{V}_{\Delta t}^{n+1}  ) + O (s^2)  . \label{consistency-6.3}
\end{eqnarray}

In terms of the spatial discretization, the velocity profile $\hat{\boldsymbol{U}}_1$ is not divergence-free at a discrete level, so that its discrete inner product with the pressure gradient may not vanish. To overcome the difficulty, we propose a spatial interpolation operator $\mathcal{PH}_h$ defined as follow, for any $\boldsymbol{u}\in H^1(\Omega)$, $\nabla\cdot \boldsymbol{u}=0$:

There is a exact stream function vector $\boldsymbol{\psi}=(\psi_1,\ \psi_2,\ \psi_3)^T$ so that $\boldsymbol{u}=\nabla^{\perp} \boldsymbol{\psi}$,
\begin{equation}
\mathcal{PH}_h(\boldsymbol{u})=\nabla_h^{\perp} \boldsymbol{\psi}=(D_y \psi_3-D_z \psi_2, D_z \psi_1-D_x \psi_3, D_x \psi_2-D_y \psi_1)^T . 
\end{equation}
This definition guarantees $\nabla_h \cdot \mathcal{PH}_h(\boldsymbol{u})=0$ at a point-wise level. Consequently, we obtain the definition of \eqref{perturbation term} and the higher order truncation error for $(\hat{\Phi},\ \hat{\boldsymbol{U}},\ \hat{P})$:
\begin{eqnarray}
&&
\frac{\hat{\Phi}^{n+1}-\hat{\Phi}^n}{s}=-\nabla_h\cdot(A_h\hat{\Phi}^n\hat{\boldsymbol{U}}^{n+1})+\Delta_h \hat{\mathcal{V}}^{n+1}+\tau_{\phi}^{n+1} , 
\label{consistency-7.1}\\
&&
\hat{\mathcal{V}}^{n+1}=\ln(1+\hat{\Phi}^{n+1})-\ln(1-\hat{\Phi}^{n+1})-\theta_0\hat{\Phi}^n-\varepsilon^2\Delta_h\hat{\Phi}^{n+1},
\label{consistency-7.2}\\
&&
(-\Delta_h+I)\hat{\boldsymbol{U}}^{n+1}=-\nabla_h \hat{P}^{n+1}-\gamma(A_h\phi^n\nabla_h\hat{\mathcal{V}}^{n+1})+\tau_v^{n+1},
\label{consistency-7.3}\\
&&
\nabla_h\cdot\hat{\boldsymbol{U}}^{n+1}=0 , \label{consistency-7.4}
\end{eqnarray}
where
\begin{equation}
\|\tau_\phi^{n+1}\|_2,\ \|\tau_v^{n+1}\|_2 \leq O(s^2+h^2).
\end{equation}

The reason for such a higher truncation error estimate is to derive an $\ell^\infty$ bound for the numerical solution, which is needed to obtain the separation property in the rough error estimate. With such a property for the constructed approximate solution and the numerical solution, the nonlinear error term could be appropriately analyzed in the $\ell^\infty (0,T; H_h^1)$ convergence estimate. 

\begin{rem}
Trivial initial data $\Phi_{\Delta t} (\, \cdot \, , t=0) \equiv 0$ are imposed, as in \eqref{consistency-4.1}-\eqref{consistency-4.3}. Therefore using similar process in \eqref{mass conservative_1}-\eqref{mass conservative_2}, we have
\begin{eqnarray}
&
\phi^0 \equiv  \hat{\Phi}^0 ,  \quad \overline{\phi^k} = \overline{\phi^0} ,  \quad \forall \, k \ge 0,
\label{consistency-8.1}
\\
 &
\overline{\hat{\Phi}^k} = \frac{1}{|\Omega|}\int_\Omega \, \hat{\Phi} ( \cdot, t_k) \, d {\bf x}  = \frac{1}{|\Omega|}\int_\Omega \, \hat{\Phi}^0 \, d {\bf x} = \overline{\phi^0} ,  \quad \forall \, k \ge 0,  
	\label{consistency-8.2}
\end{eqnarray}
where the first step is based on the fact that $\hat{\Phi} \in {\cal B}^K$, and the second step comes from the mass conservative property of $\hat{\Phi}$ at the continuous level. These two properties will be used in the later analysis. 

In addition, since $\hat{\Phi}$ is mass conservative at a discrete level, we observe that the local truncation error $\tau_\phi$ has a similar property: 
\begin{equation} 
  \overline{\tau_\phi^{n+1}} = 0 ,  \quad \forall \, n \ge 0 . \label{consistency-8.3}
\end{equation}  
\end{rem}
\begin{rem}
Since the correction function $\Phi_{\Delta t}$ is bounded, we recall the separation property \eqref{separation property} for the exact solution, and obtain a similar property for $\hat{\Phi}$ if $s$ and $h$ sufficiently small: 
\begin{equation}
1+\hat{\Phi} \geq \epsilon_0^*:=\frac{\epsilon_0}{2}, \quad 1-\hat{\Phi} \geq \epsilon_0^*.\label{separation property hat}
\end{equation}
Such a uniform bound will be used in the convergence analysis. 

In addition, since the correction function is only based on the projection solution $(\Phi_N, \boldsymbol{U}_N, P_N)$ with enough regularity, its discrete $W_h^{1,\infty}$ norm will stay bounded:
\begin{equation} 
  \| \hat{\Phi}^k \|_\infty \leq C^\star , \, \, \, 
  \| \hat{\boldsymbol{U}}^k \|_\infty \leq C^\star   , \, \, \, 
  \| \nabla_h \hat{\Phi}^k \|_\infty \leq C^\star , \, \, \, 
  \| \nabla_h \hat{\boldsymbol{U}}^k \|_\infty \leq C^\star  ,  \quad \forall \, k \ge 0 .  
    \label{assumption:W1-infty bound}
\end{equation}  
\end{rem}

\subsection{Rough error estimate}
Instead of a direct analysis for the error function \eqref{error function-1}, we introduce the perturbed numerical error function with second order truncation error:
\begin{equation}
\tilde{\phi}^n := \mathcal{P}_h \hat{\Phi}^n - \phi^n ,  \, \, \, 
\tilde{\boldsymbol{u}}^n := \mathcal{P}_h \hat{\boldsymbol{U}}^n - \boldsymbol{u}^n , \, \, \, 
\tilde{p}^n :=   \mathcal{P}_h \hat{P}^n - p^n , 
\quad \forall \ m \in \mathbb{N} .
\label{error function-2}
\end{equation}
In turn, subtracting the numerical scheme \eqref{discrete-CHS-0.1} -- \eqref{discrete-CHS-0.4} from \eqref{consistency-7.1} -- \eqref{consistency-7.4} gives 
\begin{eqnarray}
\frac{\tilde{\phi}^{n+1}-\tilde{\phi}^n}{s}&=&\Delta_h \tilde{\mu}^{n+1}-\nabla_h\cdot(A_h\tilde{\phi}^n \hat{\boldsymbol{U}}^{n+1}+A_h\phi^n\tilde{\boldsymbol{u}}^{n+1})+\tau_{\phi}^{n+1},\label{error equation-1.1} \\
\tilde{\mu}^{n+1}&=&\ln(1+\hat{\phi}^{n+1})-\ln(1+\phi^{n+1})-\ln(1-\hat{\phi}^{n+1})+\ln(1-\phi^{n+1}) \nonumber \\
&&-\theta_0 \tilde{\phi}^n-\varepsilon^2\Delta_h\tilde{\phi}^{n+1},\label{error equation-1.2} \\
(-\Delta_h+I)\tilde{\boldsymbol{u}}^{n+1}&=&-\nabla_h \tilde{p}^{n+1}-\gamma(A_h\phi^n\nabla_h\tilde{\mu}^{n+1}+A_h \tilde{\phi}^n \nabla_h \hat{\mathcal{V}}^{n+1})+\tau_v^{n+1},\label{error equation-1.3} \\
\nabla_h\cdot\tilde{\boldsymbol{u}}^{n+1}&=&0.\label{error equation-1.4}
\end{eqnarray}
where
$$
\|\tau_{\phi}^{n+1}\|_2,\, \|\tau_v^{n+1}\|_2 \leq O(s^2+h^2).
$$

Since $\hat{\mathcal{V}}^{n+1}$ only depends on the exact solution and correction function, we assume a discrete $W_h^{1,\infty}$ bound
\begin{equation}
\|\hat{\mathcal{V}}^{n+1}\|_{W^{1,\infty}_h} \leq C^*.  \label{assumption:W1-infty bound mu}
\end{equation}
In addition, we make the following a-priori assumption for the previous time step 
\begin{equation}
  \| \tilde{\phi}^n \|_2 + \| \nabla_h \tilde{\phi}^n \|_2 \leq s^\frac{15}{8} + h^\frac{15}{8}  . 
   \label{a priori-1} 
\end{equation}
Such an a-priori assumption will be recovered by the convergence analysis in the next time step,  which will be demonstrated later. In turn, this a-priori assumption leads to an $\ell^\infty$ bound, based on the inverse inequality and the linear refinement requirement $C_1h \leq s \leq C_2h$: 
\begin{equation}
\|\tilde{\phi}^n\|_\infty \leq \frac{C\|\tilde{\phi}^n\|_{H^1_h}}{h^\frac{1}{2}} \leq C(s^\frac{11}{8} + h^\frac{11}{8}) \leq 1.\label{a priori-2} 
\end{equation}
The following lemma states the rough error estimate; the detailed proof will be provided in Appendix \ref{appendix A}.

\begin{lemma} \label{rough convergence lemma}
We make the regularity assumption of $\hat{\mathcal{V}}^{n+1}$ \eqref{assumption:W1-infty bound mu}, as well as the a-priori assumption \eqref{a priori-1}. For the numerical error evolutionary system \eqref{error equation-1.1} -- \eqref{error equation-1.4}, a rough error estimate is valid: 
\begin{equation}
  \| \tilde{\phi}^{n+1} \|_2 + \| \nabla_h \tilde{\phi}^{n+1} \|_2 \leq C(s^\frac{5}{4} + h^\frac{5}{4}).\label{rough convergence}  
\end{equation}
\end{lemma}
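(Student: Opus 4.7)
The idea is to replay the energy-stability argument of Theorem~\ref{thm: ener stab} on the error system \eqref{error equation-1.1}--\eqref{error equation-1.4}. Take the discrete inner product of \eqref{error equation-1.1} with $\tilde{\mu}^{n+1}$, of \eqref{error equation-1.2} with $(\tilde{\phi}^{n+1}-\tilde{\phi}^n)/s$, and of \eqref{error equation-1.3} with $\tilde{\boldsymbol{u}}^{n+1}/\gamma$, then sum the three identities. Since $\nabla_h\cdot\tilde{\boldsymbol{u}}^{n+1}=0$, the pressure-error term drops, and the mixed velocity--chemical-potential pair $(A_h\phi^n\tilde{\boldsymbol{u}}^{n+1},\nabla_h\tilde{\mu}^{n+1})$ produced by the convective piece of \eqref{error equation-1.1} cancels precisely against $-(A_h\phi^n\nabla_h\tilde{\mu}^{n+1},\tilde{\boldsymbol{u}}^{n+1})$ from the Stokes equation. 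The left-hand side is then a sum of positive dissipation quantities: $\|\nabla_h\tilde{\mu}^{n+1}\|_2^2$, the telescoping $H_h^1$ increment $\frac{\varepsilon^{2}}{2s}(\|\nabla_h\tilde{\phi}^{n+1}\|_2^2-\|\nabla_h\tilde{\phi}^n\|_2^2+\|\nabla_h(\tilde{\phi}^{n+1}-\tilde{\phi}^n)\|_2^2)$, and $\frac{1}{\gamma}\|\tilde{\boldsymbol{u}}^{n+1}\|_{H_h^1}^2$.

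\textbf{Logarithmic contribution.} Writing $\mathcal{N}(\phi)=\ln(1+\phi)-\ln(1-\phi)$, the logarithmic part of \eqref{error equation-1.2} contributes $\frac{1}{s}(\mathcal{N}(\hat{\Phi}^{n+1})-\mathcal{N}(\phi^{n+1}),\tilde{\phi}^{n+1}-\tilde{\phi}^n)$. Pointwise monotonicity of $\mathcal{N}$ (convexity of the Flory--Huggins part) gives $(\mathcal{N}(\hat{\Phi}^{n+1})-\mathcal{N}(\phi^{n+1}),\tilde{\phi}^{n+1})\ge 0$, which I discard to the good side. The residual $-\frac{1}{s}(\mathcal{N}(\hat{\Phi}^{n+1})-\mathcal{N}(\phi^{n+1}),\tilde{\phi}^n)$ is the main obstacle: no uniform separation property is yet known for $\phi^{n+1}$, so only a crude estimate is available, using the positivity-preserving property and the a-priori $\ell^\infty$ control \eqref{a priori-2}. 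This crude bound loses an $s^{3/4}$ power relative to the $O(s^2+h^2)$ truncation scale, and this loss is exactly what degrades the rate to $s^{5/4}+h^{5/4}$.

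\textbf{Residual cross terms and truncation.} The uncancelled couplings $(A_h\tilde{\phi}^n\hat{\boldsymbol{U}}^{n+1},\nabla_h\tilde{\mu}^{n+1})$ and $-(A_h\tilde{\phi}^n\nabla_h\hat{\mathcal{V}}^{n+1},\tilde{\boldsymbol{u}}^{n+1})$ are handled by Cauchy--Schwarz using the discrete $W_h^{1,\infty}$ bounds \eqref{assumption:W1-infty bound} and \eqref{assumption:W1-infty bound mu} together with the a-priori $\|\tilde{\phi}^n\|_2$ bound \eqref{a priori-1}, then absorbed into $\|\nabla_h\tilde{\mu}^{n+1}\|_2^2$ and $\frac{1}{\gamma}\|\tilde{\boldsymbol{u}}^{n+1}\|_{H_h^1}^2$ by Young. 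The truncation pairings $(\tau_\phi^{n+1},\tilde{\mu}^{n+1})$ and $\frac{1}{\gamma}(\tau_v^{n+1},\tilde{\boldsymbol{u}}^{n+1})$ are treated the same way; the mean-zero property $\overline{\tau_\phi^{n+1}}=0$ from \eqref{consistency-8.3} lets me replace $\tilde{\mu}^{n+1}$ by its mean-zero projection and invoke the discrete Poincar\'e inequality to bound it by $\|\nabla_h\tilde{\mu}^{n+1}\|_2$, generating an inhomogeneity of size $O((s^2+h^2)^2)$. The concave expansive piece $-\frac{\theta_0}{s}(\tilde{\phi}^n,\tilde{\phi}^{n+1}-\tilde{\phi}^n)$ is rewritten as a telescoping $\|\tilde{\phi}\|_2^2$ difference plus a harmless time-increment piece.

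\textbf{Closing.} Summing the resulting one-step inequality over $k=0,\dots,n$, using the mass-conservation identities \eqref{consistency-8.1}--\eqref{consistency-8.2} so that $\overline{\tilde{\phi}^{n+1}}=0$ and the discrete Poincar\'e inequality converts control of $\|\nabla_h\tilde{\phi}^{n+1}\|_2$ into control of $\|\tilde{\phi}^{n+1}\|_2$, and applying discrete Gronwall then delivers the claim $\|\tilde{\phi}^{n+1}\|_2+\|\nabla_h\tilde{\phi}^{n+1}\|_2\le C(s^{5/4}+h^{5/4})$. The sole hard step is the backward log residual. The point of working at the $H_h^1$ level rather than at the $\ell^2$ level is precisely that, under the linear refinement $C_1 h\le s\le C_2 h$, the 3D inverse inequality converts \eqref{rough convergence} into an $\ell^\infty$ bound on $\tilde{\phi}^{n+1}$, which in turn supplies the uniform separation from $\pm 1$ needed to run the subsequent refined estimate at the optimal rate.
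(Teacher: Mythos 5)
Your overall frame (testing \eqref{error equation-1.1} by $\tilde{\mu}^{n+1}$, using the Stokes error equation together with $\nabla_h\cdot\tilde{\boldsymbol{u}}^{n+1}=0$ to convert the coupling term into positive velocity dissipation, and handling the convection and truncation errors with the $W_h^{1,\infty}$ bounds, the mean-zero property of $\tau_\phi^{n+1}$, and Young's inequality) agrees with the paper. The gap is exactly at the step you call the ``sole hard step,'' and it is created by your decision to also test \eqref{error equation-1.2} with $(\tilde{\phi}^{n+1}-\tilde{\phi}^n)/s$ in imitation of the energy-stability proof. That testing produces the residual $-\frac1s\bigl(\mathcal{N}(\hat{\Phi}^{n+1})-\mathcal{N}(\phi^{n+1}),\tilde{\phi}^n\bigr)$, with $\mathcal{N}(\phi)=\ln(1+\phi)-\ln(1-\phi)$, which you claim admits a ``crude estimate'' from positivity and \eqref{a priori-2} losing only an $s^{3/4}$ factor. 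No such estimate is available: positivity-preservation gives $|\phi^{n+1}|<1$ with no quantitative distance from $\pm1$ (the $\delta$ in the solvability proof is not uniform in $s,h$), so $\ln(1\pm\phi^{n+1})$, and hence $\mathcal{N}(\hat{\Phi}^{n+1})-\mathcal{N}(\phi^{n+1})$, is not controlled in any norm before the separation property \eqref{separation property numerical} is established --- and establishing that property is precisely the purpose of this lemma; moreover \eqref{a priori-2} controls $\tilde{\phi}^n$, not $\phi^{n+1}$. Writing the difference via the mean value theorem as $\bigl(\frac{1}{1+\eta}+\frac{1}{1-\eta'}\bigr)\tilde{\phi}^{n+1}$ does not help here, since the coefficient is only bounded below, not above, so its pairing with $\tilde{\phi}^n$ cannot be closed; the asserted rate loss of $s^{3/4}$ is therefore unsubstantiated, and I do not see how to substantiate it.

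The paper avoids this term altogether. It tests \eqref{error equation-1.1} by $\tilde{\mu}^{n+1}$ only, keeps the pairing $(\tilde{\phi}^{n+1},\tilde{\mu}^{n+1})$, and uses the representation \eqref{proof lemma 7} with $0<1+\eta^{(n+1)}<2$, so the logarithmic differences paired against $\tilde{\phi}^{n+1}$ are coercive (each contributes at least $\frac12\|\tilde{\phi}^{n+1}\|_2^2$), the surface term supplies $\varepsilon^2\|\nabla_h\tilde{\phi}^{n+1}\|_2^2$, and the potential largeness of $\frac{1}{1\pm\eta}$ only strengthens the good sign. The backward term $\frac1s(\tilde{\phi}^n,\tilde{\mu}^{n+1})$ is bounded by $\frac1s\|\tilde{\phi}^n\|_{-1,h}\,\|\nabla_h\tilde{\mu}^{n+1}\|_2$ and absorbed into the $\|\nabla_h\tilde{\mu}^{n+1}\|_2^2$ dissipation, so the logarithms never need to be estimated in any norm. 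This yields a one-step inequality; inserting the a-priori bound \eqref{a priori-1} at time $t^n$ directly gives $\|\tilde{\phi}^{n+1}\|_2+\|\nabla_h\tilde{\phi}^{n+1}\|_2\le C(s^{11/8}+h^{11/8})\le C(s^{5/4}+h^{5/4})$, with no time summation or Gronwall argument (your plan to sum over $k$ would in any case require the a-priori assumption at all earlier steps, which the lemma does not grant). To repair your argument you would need to reintroduce the $(\tilde{\phi}^{n+1},\tilde{\mu}^{n+1})$ pairing, i.e., revert to the paper's choice of test function: the time-increment test function cannot exploit the convexity of the logarithm at this stage.
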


As a direct consequence of the rough error estimate \eqref{rough convergence}, an application of 3-D inverse inequality, combined with a discrete Sobolev inequality (given by~\eqref{Sobolev-1} in Lemma~\ref{lem: Sobolev-1}), reveals that 
\begin{eqnarray}
&&
\| \tilde{\phi}^{n+1} \|_\infty  \leq \frac{C \| \tilde{\phi}^{n+1} \|_{H_h^1} }{h^\frac{1}{2}}
\le \hat {C}_1 ( s^\frac{3}{4} + h^\frac{3}{4} )  \le \frac{\epsilon_0^\star}{2} , 
   \label{convergence-rough-1.1}  
\\
&&
\| \tilde{\phi}^{n+1} \|_4  \leq C\| \tilde{\phi}^{n+1} \|_{H^1_h}  \leq C(s^\frac{5}{4} + h^\frac{5}{4}).
\label{convergence-rough-1.2}  
\end{eqnarray}
Furthermore, a combination of \eqref{convergence-rough-1.1} and separation property \eqref{separation property hat} leads to a separation property of the numerical solution at the next time step $t^{n+1}$
\begin{equation}
\frac{\epsilon_0^*}{2} \leq 1+\phi^{n+1} \leq 2, \quad \frac{\epsilon_0^*}{2} \leq 1-\phi^{n+1} \leq 2.\label{separation property numerical}
\end{equation}
Such a uniform bound will play a very important role in the refined error estimate.

\begin{rem}
It is noticed that the accuracy order in \eqref{rough convergence} is at least half order lower than the a-priori estimate \eqref{a priori-1}, as well as the lower rate of the $\ell^\infty$ error in \eqref{convergence-rough-1.1} , which comes from an application of the inverse inequality. In particular, the first order temporal truncation error is not sufficient to ensure the phase separation property; this is the reason why a complex process to construct higher order truncation error is needed. On the other hand, the a-priori assumption could not be covered by the lower accuracy rate in \eqref{rough convergence}.  Instead, such a separation property \eqref{separation property numerical} will lead to a much sharper refined estimate.
\end{rem}

\subsection{Refined error estimate}
Before proceeding into the refined error estimate, the following preliminary result for the nonlinear error term is needed. For simplicity of presentation, the detailed proof will be provided in Appendix \ref{appendix B}. 

\begin{lemma}\label{nonlinear error term}
Define
\begin{equation}
\mathcal{L}^{n+1}=\ln (1+\hat{\Phi}^{n+1})-\ln(1+\phi^{n+1})-\ln(1-\hat{\Phi}^{n+1})+\ln(1-\phi^{n+1})-\theta_{0} \tilde{\phi}^n . \label{nonlinear error-1}
\end{equation}
Based on the separation property \eqref{separation property numerical} for numerical solution, we have
\begin{equation}
\|\nabla_h\mathcal{L}^{n+1}\|_2 \leq 4(\epsilon_0^*)^{-1}\|\nabla_h\tilde{\phi}^{n+1}\|_2+C(\epsilon_0^*)^{-2}\|\tilde{\phi}^{n+1}\|_4+\theta_0\|\nabla_h\tilde{\phi}^n\|_2.
\label{nonlinear error-2}
\end{equation}
\end{lemma}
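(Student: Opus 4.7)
The plan is to rewrite $\mathcal{L}^{n+1}$, up to the affine term $-\theta_0\tilde\phi^n$, as the product of the error $\tilde\phi^{n+1}$ and a cell-centered amplitude $H$ built from $\phi^{n+1}$ and $\hat\Phi^{n+1}$, and then apply the discrete product rule $D_x(fg) = A_x(f)D_x(g) + D_x(f)A_x(g)$ (with the analogous identities in $y$ and $z$) to split $\nabla_h\mathcal{L}^{n+1}$ into two manageable pieces. By the fundamental theorem of calculus applied pointwise, $\ln(1+\hat\Phi^{n+1}) - \ln(1+\phi^{n+1}) = \tilde\phi^{n+1}H_+$ where $H_+ := \int_0^1 dt/(1+\phi^{n+1}+t\tilde\phi^{n+1})$, and similarly $\ln(1-\phi^{n+1}) - \ln(1-\hat\Phi^{n+1}) = \tilde\phi^{n+1}H_-$ with $H_- := \int_0^1 dt/(1-\phi^{n+1}-t\tilde\phi^{n+1})$. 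The denominators are convex combinations $(1-t)(1\pm\phi^{n+1}) + t(1\pm\hat\Phi^{n+1})$, so the separation bounds \eqref{separation property hat} and \eqref{separation property numerical} give $\|H_\pm\|_\infty \leq 2/\epsilon_0^*$; hence with $H := H_+ + H_-$ one has $\mathcal{L}^{n+1} = \tilde\phi^{n+1}H - \theta_0\tilde\phi^n$, $\|H\|_\infty \leq 4/\epsilon_0^*$, while the partial derivatives of $H$ (viewed as a function of the arguments $a=\phi^{n+1}$ and $b=\tilde\phi^{n+1}$) are pointwise bounded by $C/(\epsilon_0^*)^2$.

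Applying the discrete product rule direction-by-direction, the first piece $D_x(\tilde\phi^{n+1})\cdot A_x(H)$ is controlled in $\ell^2$ by $\|A_x H\|_\infty \|D_x\tilde\phi^{n+1}\|_2 \leq (4/\epsilon_0^*)\|D_x\tilde\phi^{n+1}\|_2$; summed over $x,y,z$ this is precisely the leading $4(\epsilon_0^*)^{-1}\|\nabla_h\tilde\phi^{n+1}\|_2$ contribution in \eqref{nonlinear error-2}. For the cross piece $A_x(\tilde\phi^{n+1})\cdot D_x(H)$, the componentwise Lipschitz estimate on $H$ yields $|D_xH_{i+1/2,j,k}| \leq C(\epsilon_0^*)^{-2}(|D_x\phi^{n+1}| + |D_x\tilde\phi^{n+1}|)_{i+1/2,j,k}$, and H\"older's inequality then produces $\|A_x(\tilde\phi^{n+1})D_x(H)\|_2 \leq C(\epsilon_0^*)^{-2}\|\tilde\phi^{n+1}\|_4(\|D_x\phi^{n+1}\|_4 + \|D_x\tilde\phi^{n+1}\|_4)$. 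Writing $D_x\phi^{n+1} = D_x\hat\Phi^{n+1} - D_x\tilde\phi^{n+1}$, the $W_h^{1,\infty}$ regularity \eqref{assumption:W1-infty bound} supplies $\|D_x\hat\Phi^{n+1}\|_4 \leq C^\star$, while the rough error bound $\|\nabla_h\tilde\phi^{n+1}\|_2 \leq C(s^{5/4}+h^{5/4})$ from Lemma \ref{rough convergence lemma}, combined with the three-dimensional inverse inequality $\|\cdot\|_4 \leq Ch^{-3/4}\|\cdot\|_2$ and the linear refinement $s=O(h)$, gives $\|D_x\tilde\phi^{n+1}\|_4 \leq Ch^{1/2}$, uniformly bounded. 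The cross piece is therefore controlled by $C(\epsilon_0^*)^{-2}\|\tilde\phi^{n+1}\|_4$; adding the trivial contribution $\theta_0\|\nabla_h\tilde\phi^n\|_2$ from the affine term then yields \eqref{nonlinear error-2}.

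The main obstacle is precisely this uniform control of $\|D_xH\|_4$, which demands a $W_h^{1,4}$ bound on the discrete gradient of the numerical solution itself and is not available a priori for the CHS system. The rough-refined splitting is tailored exactly to supply it: the rough $\ell^\infty(0,T;H_h^1)$ control of $\tilde\phi^{n+1}$ (valid only under $s\sim h$) is upgraded via the three-dimensional inverse inequality into uniform $\ell^4$ control of $\nabla_h\tilde\phi^{n+1}$, after which the sharper refined estimate \eqref{nonlinear error-2} can be propagated in the subsequent $H_h^1$ convergence analysis. A minor caveat is the bookkeeping when summing the $x$, $y$, $z$ contributions into the Euclidean $\|\nabla_h\cdot\|_2$ norm, which only affects the absolute constants and not the stated form of the inequality.
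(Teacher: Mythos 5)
Your proposal is correct and follows essentially the same route as the paper's proof in Appendix~B: write each logarithmic difference as $\tilde{\phi}^{n+1}$ times a slope factor bounded by $2(\epsilon_0^*)^{-1}$ via the separation properties \eqref{separation property hat} and \eqref{separation property numerical}, split $\nabla_h$ of the product by a discrete product rule, bound the leading piece in $\ell^\infty\times\ell^2$, and handle the cross piece by H\"older in $\ell^4\times\ell^4$, using the $W_h^{1,\infty}$ bound \eqref{assumption:W1-infty bound}, the rough error estimate, an inverse inequality, and the constraint $C_1 h \le s \le C_2 h$ to keep the gradient factor uniformly bounded in $\ell^4$. The only differences are cosmetic: you use the integral (FTC) form of the slope and the symmetric product rule $D_x(fg)=A_xf\,D_xg+D_xf\,A_xg$, and you control $\|\nabla_h\tilde{\phi}^{n+1}\|_4$ directly from the rough $H_h^1$ bound, whereas the paper uses the intermediate-value representation with $\eta^{(n+1)}$, a one-sided product splitting, and the rough $\ell^4$ bound on $e=\hat{\Phi}^{n+1}-\eta^{(n+1)}$.
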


Now we carry out the refined error estimate. Taking a discrete inner product with \eqref{error equation-1.1} by $-2\Delta_h \tilde{\phi}^{n+1}$ leads to
\begin{eqnarray} 
  &&
  \frac{1}{s} \left( \| \nabla_h \tilde{\phi}^{n+1} \|_2^2 - \|\nabla_h \tilde{\phi}^n \|_2^2 
  +   \| \nabla_h ( \tilde{\phi}^{n+1}  - \tilde{\phi}^n ) \|_2^2  \right)      
  + 2 (  \tilde{\boldsymbol{u}}^{n+1} , A_h \phi^n  \nabla_h \Delta_h \tilde{\phi}^{n+1} )   
  \nonumber 
\\ 
  &=&
   2(\nabla_h \tilde{\mu}^{n+1}, \nabla_h \Delta_h \tilde{\phi}^{n+1})
  - 2  ( \tau_\phi^{n+1} , \Delta_h \tilde{\phi}^{n+1} )
  - 2 ( A_h \tilde{\phi}^n \hat{\boldsymbol{U}}^{n+1}, \nabla_h \Delta_h \tilde{\phi}^{n+1} ) ,  
  \label{convergence-1} 
\end{eqnarray}
where summation-by-parts formulas have been recalled. The Cauchy inequality could be applied to the local truncation error term:
\begin{equation}
-2( \tau_\phi^{n+1}, \Delta_h \tilde{\phi}^{n+1}) \leq 2\|\tau_\phi^{n+1}\|_{-1,h} \cdot \|\nabla_h \Delta_h \tilde{\phi}^{n+1}\|_2 \leq 4\varepsilon^{-2}\|\tau_\phi^{n+1}\|_{-1,h}^2+\frac{\varepsilon^2}{4}\|\nabla_h \Delta_h \tilde{\phi}^{n+1}\|_2^2. \label{convergence-2} 
\end{equation}
The third term on the right-hand-side could be bounded in a similar way
\begin{equation}
\begin{aligned}
-2( A_h\tilde{\phi}^n \hat{\boldsymbol{U}}^{n+1}, \nabla_h \Delta_h \tilde{\phi}^{n+1} )&\leq 2\|\hat{\boldsymbol{U}}^{n+1}\|_\infty \cdot \|\tilde{\phi}^n\|_2 \cdot \|\nabla_h \Delta_h \tilde{\phi}^{n+1}\|_2 \\
&\leq 2C^*\|\tilde{\phi}^n\|_2 \cdot \|\nabla_h \Delta_h \tilde{\phi}^{n+1}\|_2 \\
&\leq 4(C^*)^2\varepsilon^{-2}\|\tilde{\phi}^n\|_2^2 + \frac{\varepsilon^2}{4}\|\nabla_h \Delta_h \tilde{\phi}^{n+1}\|_2^2.
\end{aligned}\label{convergence-3}  
\end{equation}
For the chemical potential diffusion term, the standard Cauchy inequality indicates that
\begin{equation}
\begin{aligned}
2(\nabla_h \tilde{\mu}^{n+1}, \nabla_h \Delta_h \tilde{\phi}^{n+1})=&2(\nabla_h\mathcal{L}^{n+1}, \nabla_h \Delta_h \tilde{\phi}^{n+1})-2\varepsilon^2\|\nabla_h \Delta_h \tilde{\phi}^{n+1}\|_2^2 \\
\leq& 4\varepsilon^{-2}\|\nabla_h\mathcal{L}^{n+1}\|_2^2-\frac{7\varepsilon^2}{4}\|\nabla_h \Delta_h \tilde{\phi}^{n+1}\|_2^2 . 
\end{aligned}\label{convergence-4}  
\end{equation}
A similar estimate could be performed for the nonlinear convection error term
\begin{align} 
   & 
    \nabla_h \Delta_h \tilde{\phi}^{n+1} 
    = \varepsilon^{-2} ( \nabla_h {\cal L}^{n+1}  - \nabla_h \tilde{\mu}^{n+1} )  , 
    \label{convergence-5-1}       
\\
  & 
   (  \tilde{\boldsymbol{u}}^{n+1} ,  
  A_h \phi^n   \nabla_h {\cal L}^{n+1}  )
  \ge   - \| \tilde{\boldsymbol{u}}^{n+1} \|_2  \cdot \| \phi^n  \|_\infty 
  \cdot \| \nabla_h {\cal L}^{n+1}  \|_2       
   \nonumber 
\\ 
  \ge &  -  \| \tilde{\boldsymbol{u}}^{n+1} \|_2 
  \cdot \| \nabla_h {\cal L}^{n+1}  \|_2   
  \ge -  \frac{1}{4\gamma} \| \tilde{\boldsymbol{u}}^{n+1} \|_2^2  
  - \gamma\| \nabla_h {\cal L}^{n+1}  \|_2^2.
   \label{convergence-5-2}      
\end{align}  
A combination with \eqref{proof lemma 6} gives 
\begin{align} 
  & 
   2 (  \tilde{\boldsymbol{u}}^{n+1} ,  
  A_h \phi^n  \nabla_h \Delta_h \tilde{\phi}^{n+1} )  
  = 2   \varepsilon^{-2} 
  \left(  (  \tilde{\boldsymbol{u}}^{n+1} ,  
  A_h \phi^n   \nabla_h {\cal L}^{n+1}  ) 
   -  (  \tilde{\boldsymbol{u}}^{n+1} ,  
  {\cal A}_h \phi^n   \nabla_h  \tilde{\mu}^{n+1}  ) \right)  \nonumber  
\\
  \ge & 
    2   \varepsilon^{-2}  \Big( \frac{1}{2\gamma} \| \tilde{\boldsymbol{u}}^{n+1} \|_2^2 
   + \frac{1}{\gamma} \|\nabla_h \tilde{\boldsymbol{u}}^{n+1} \|_2^2
  - \gamma \| \nabla_h {\cal L}^{n+1}  \|_2^2
  - \frac{2}{\gamma} \| \tau_v^{n+1} \|_2^2  
   - C\| \tilde{\phi}^n \|_2^2 \Big)  .   
   \label{convergence-5-3}      
\end{align}  
Substituting \eqref{convergence-2}-\eqref{convergence-4} and \eqref{convergence-5-3} into \eqref{convergence-1}, combined with an application of Lemma \ref{nonlinear error term}, we obtain 
\begin{equation}
\begin{aligned}
\frac{1}{s} &\left( \| \nabla_h \tilde{\phi}^{n+1} \|_2^2 - \|\nabla_h \tilde{\phi}^n \|_2^2 + \| \nabla_h ( \tilde{\phi}^{n+1} - \tilde{\phi}^n ) \|_2^2  \right)
+\frac{5\varepsilon^2}{4}\|\nabla_h \Delta_h \tilde{\phi}^{n+1}\|_2^2
+ \frac{\varepsilon^{-2}}{\gamma} (\| \tilde{\boldsymbol{u}}^{n+1} \|_2^2+2\| \nabla_h \tilde{\boldsymbol{u}}^{n+1} \|_2^2) \\
&\quad
\leq 4\varepsilon^{-2}\left(\|\tau_\phi^{n+1}\|_{-1,h}^2+\| \tau_v^{n+1} \|_2^2\right)
+2\varepsilon^{-2}\left(2(C^*)^2+C\right) \|\tilde{\phi}^n\|_2^2
+\varepsilon^{-2}(4+2\gamma)\|\nabla_h\mathcal{L}^{n+1}\|_2^2\\
&\quad
\leq 4\varepsilon^{-2}\left(\|\tau_\phi^{n+1}\|_{-1,h}^2+\| \tau_v^{n+1} \|_2^2\right)
+2\varepsilon^{-2}\left(2(C^*)^2+C\right) \|\tilde{\phi}^n\|_2^2\\
&\quad
\qquad+(12+6\gamma)\varepsilon^{-2}\left(   16(\epsilon_0^*)^{-2}\|\nabla_h\tilde{\phi}^{n+1}\|_2^2+C(\epsilon_0^*)^{-4}\|\tilde{\phi}^{n+1}\|_4^2+\theta_0^2\|\nabla_h\tilde{\phi}^n\|_2^2   \right)\\
&\quad
\leq 4\varepsilon^{-2}\left(\|\tau_\phi^{n+1}\|_{-1,h}^2+\| \tau_v^{n+1} \|_2^2\right)
+2C\varepsilon^{-2}\left(2(C^*)^2+C\right) \|\nabla_h\tilde{\phi}^n\|_2^2 \\
&\quad
\qquad +(12+6\gamma)\varepsilon^{-2}\left(  \left( 16(\epsilon_0^*)^{-2}+C(\epsilon_0^*)^{-4}\right)\|\nabla_h\tilde{\phi}^{n+1}\|_2^2+\theta_0^2\|\nabla_h\tilde{\phi}^n\|_2^2   \right) , 
\end{aligned}
\label{convergence-5-4}  
\end{equation}
where the 3-D discrete Sobolev inequality,  $\| \cdot \|_4 \leq C\| \cdot \|_{H_h^1}$ (given by~\eqref{Sobolev-1} in Lemma~\ref{lem: Sobolev-1}), and the discrete Poincar\'e inequality have been used in last step. Therefore, with sufficiently small $s$ and $h$, an application of discrete Gronwall inequality leads to the desired higher order convergence estimate 
\begin{equation}
  \| \nabla \tilde{\phi}^{n+1} \|_2 + \Bigl(  \varepsilon^2 s   \sum_{k=1}^{n+1} 
    \| \nabla_h \Delta_h \tilde{\phi}^k \|_2^2 
   \Bigr)^{1/2}  \le C ( s^2 + h^2 ) , 
	\label{convergence-7}
\end{equation} 
based on the higher order truncation error accuracy, $\| \tau_\phi^{n+1} \|_{-1, h}$, $\| \tau_v^{n+1} \|_2 \leq C (s^2 + h^2)$. This completes the refined error estimate. 

With the higher order convergence estimate \eqref{convergence-7} in hand, the a-priori assumption in~\eqref{a priori-1} is recovered at the next time step $t^{n+1}$:  
\begin{equation} 
\| \tilde{\phi}^{n+1} \|_2,  \| \nabla_h \tilde{\phi}^{n+1} \|_2 \le C (s^2 + h^2 ) \le s^\frac{15}{8} + h^\frac{15}{8},  
	\label{a priori-8}  
\end{equation} 
provided that $s$ and $h$ are sufficiently small, in which a discrete Poincar\'e inequality has been used again. Therefore, an induction analysis could be applied. This finishes the higher order convergence analysis.

As a result, the error estimate \eqref{convergernce theorem-1} for variable $\phi$ is a direct consequence of \eqref{a priori-8}, combined with the boundedness of supplementary fields $\Phi_{\Delta t}$, as well as the projection approximation \eqref{Fourier-approximation}. This completes the proof of Theorem \ref{convergernce theorem}. 

\section{Numerical experiments}

In this section, we present a few numerical results, including a convergence test and some sample computations in a 2-D domain. A  full approximation storage (FAS) nonlinear multigrid method is used to solve the nonlinear equations in the numerical scheme \eqref{discrete-CHS-0.1} -- \eqref{discrete-CHS-0.4}. See~\cite{collins13} for details about a similar solver. The first example demonstrates the robustness of the multigrid solver. The phase decomposition phenomenon, as well as the energy stability and mass conservation property of the proposed numerical scheme, will be verified in details. In another experiment we test the convergence order of the numerical scheme~\eqref{discrete-CHS-0.1} -- \eqref{discrete-CHS-0.4}.  The computational domain is taken as $\Omega=(0,1)^2$, and the physical parameters are set as: $\theta_0=3, \gamma = 1.0$. See~\cite{chen19b} for comparison.

\subsection{Spinodal decomposition, energy decay and mass conservation}

In this subsection, we choose random initial data to display the phase decomposition phenomenon, energy decay and mass conservation.  We set $\varepsilon=0.01$, $h=\frac{1}{128}$, $s=2*10^{-5}$ and initial data as
	\begin{equation}
\phi_{i,j}=0.2+0.02*r_{i,j}, 
	\label{rand initial}
	\end{equation}
where $r_{i,j}$ is a random field of values that are uniformly distributed in $[-1,1]$. Figure \ref{Spinodal decomposition} describes evolution of the phase variable at some selected time levels with the initial condition \eqref{rand initial}. 

\begin{figure}[h]
\center
\subfigure[t=0]{
\includegraphics[height=3cm,width=4cm]{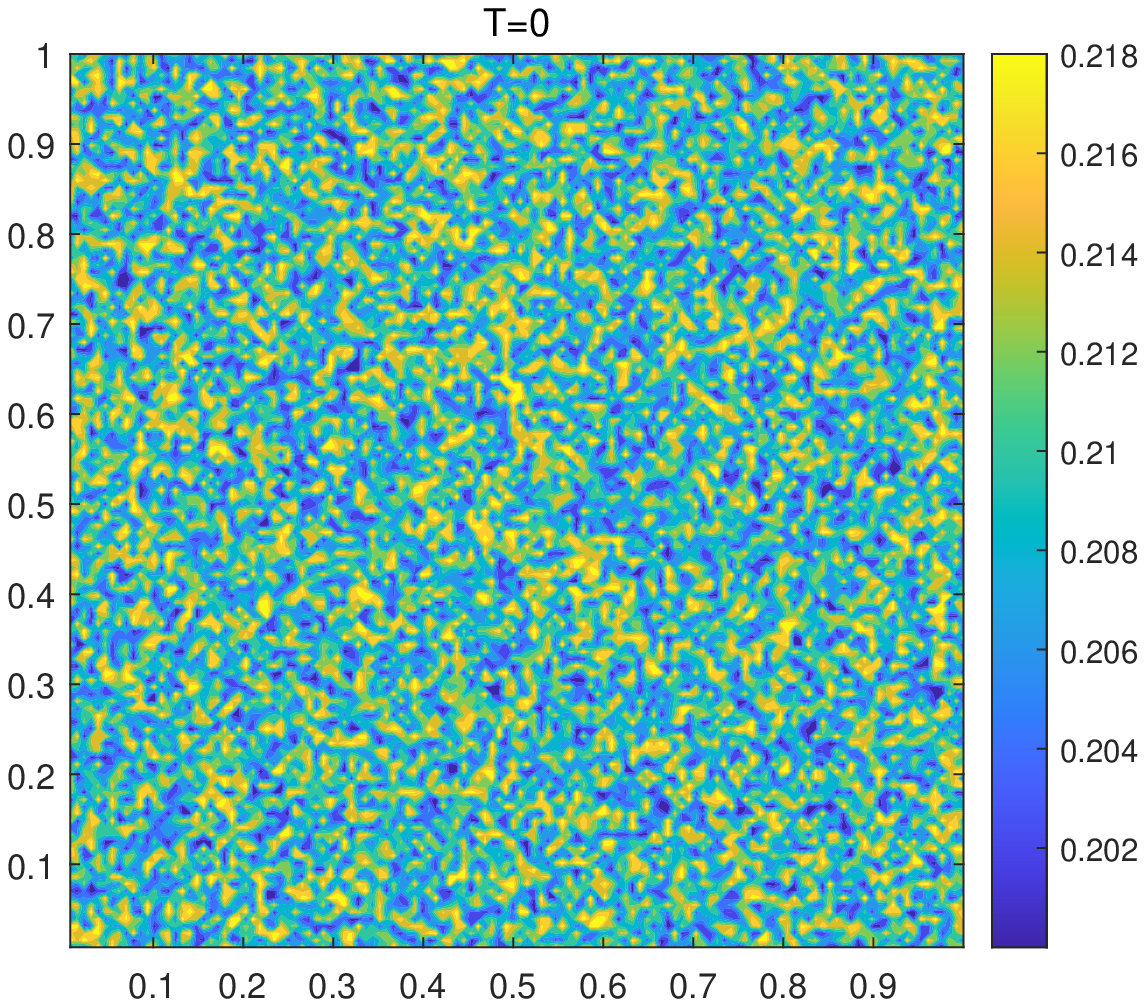}   
}
\subfigure[t=0.004]{
\includegraphics[height=3cm,width=4cm]{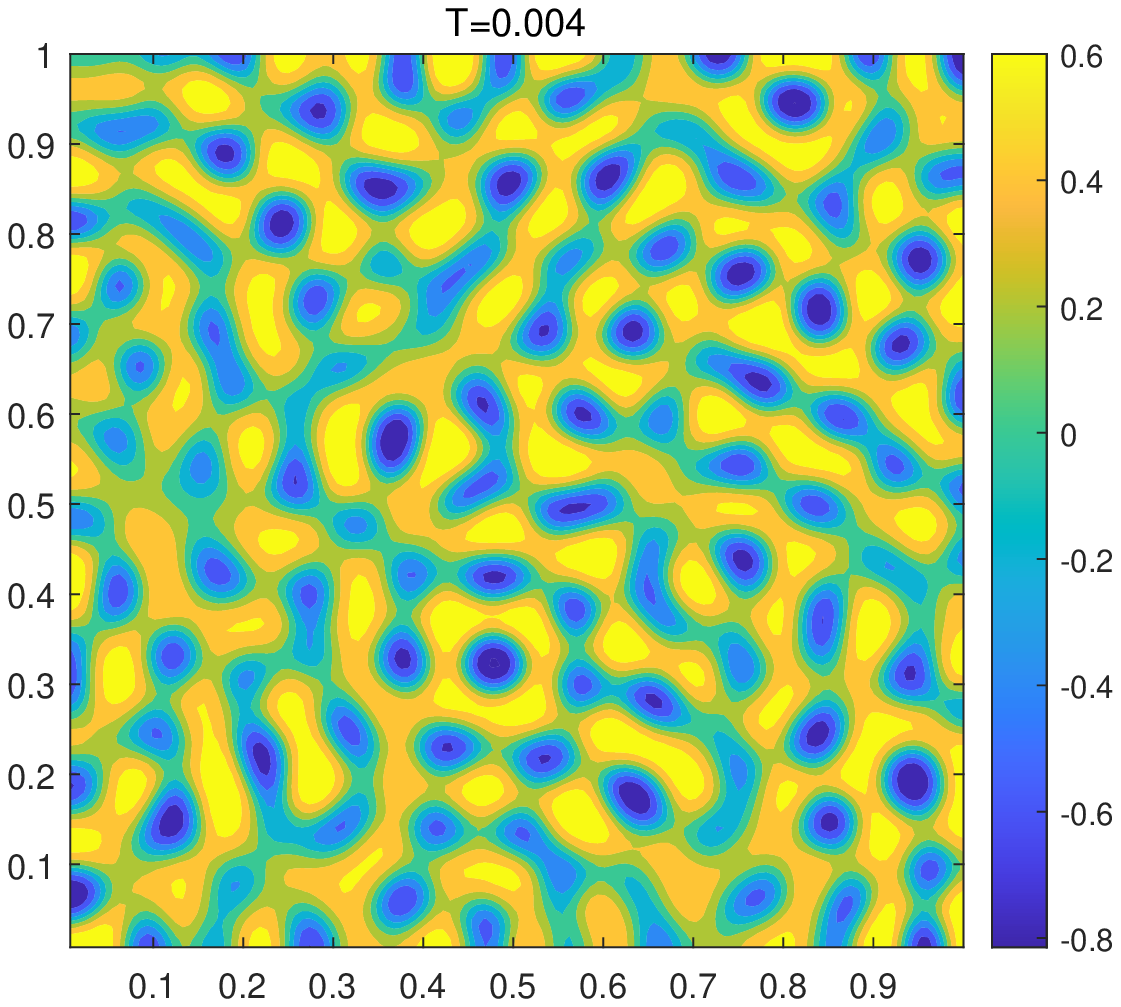}   
}
\subfigure[t=0.01]{
\includegraphics[height=3cm,width=4cm]{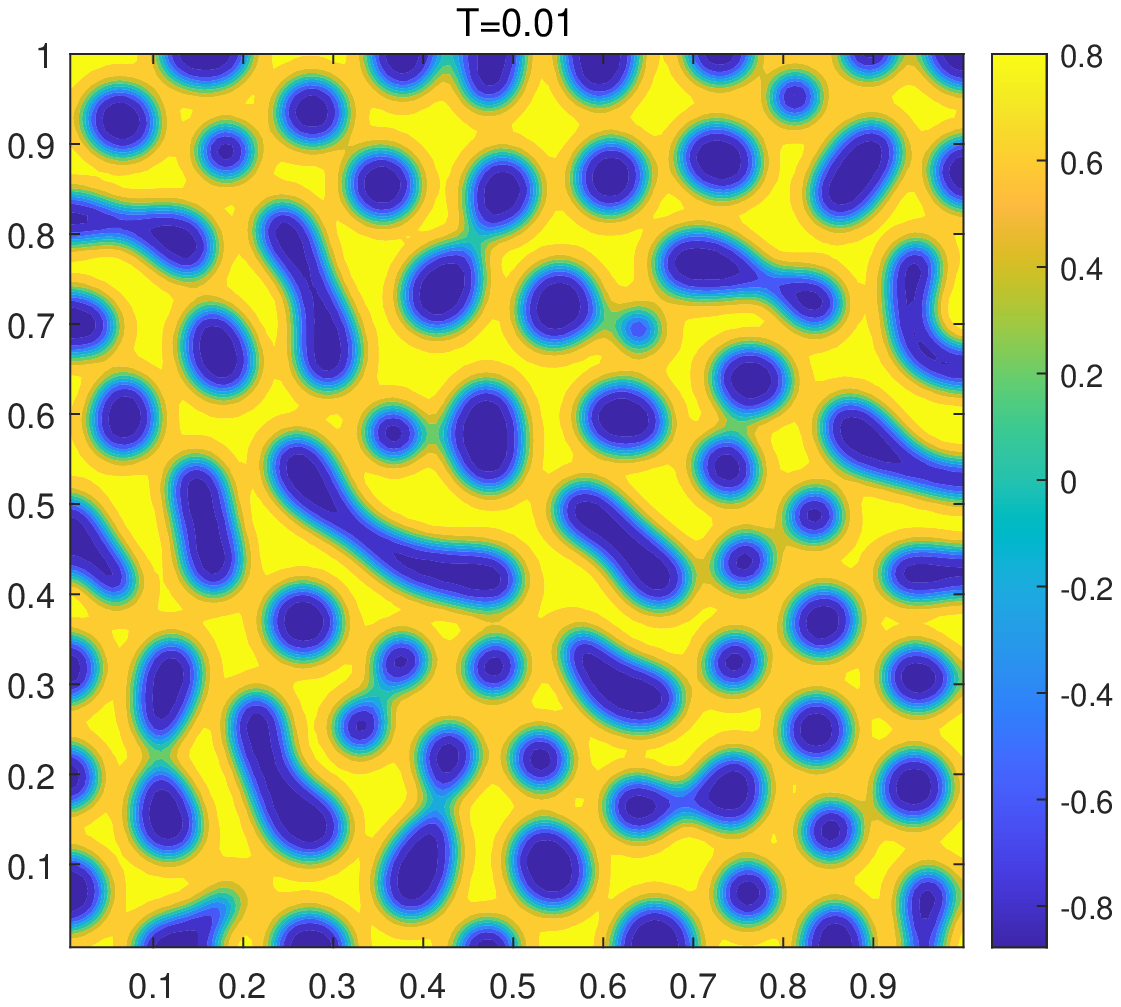}   
}

\subfigure[t=0.1]{
\includegraphics[height=3cm,width=4cm]{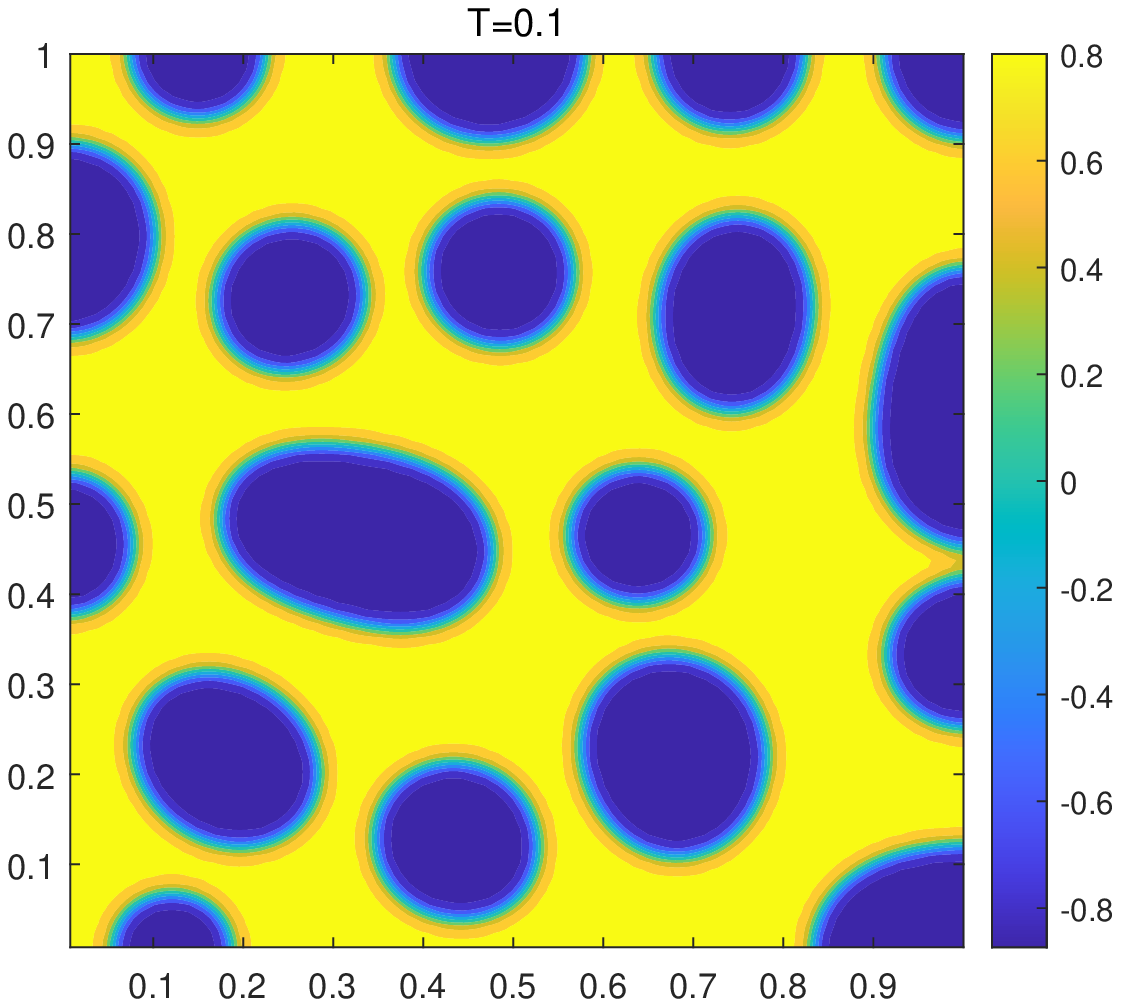}   
}
\subfigure[t=0.3]{
\includegraphics[height=3cm,width=4cm]{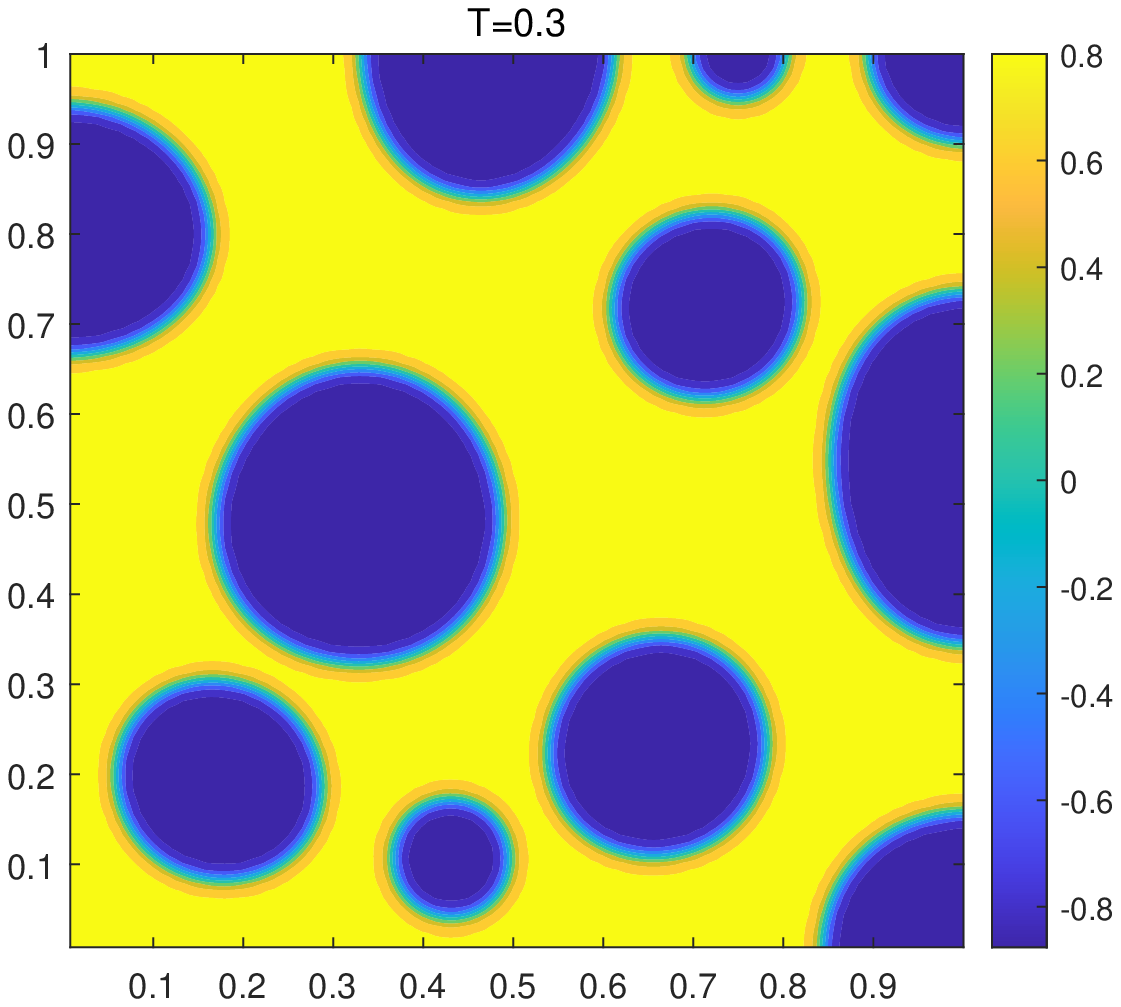}   
}
\subfigure[t=0.5]{
\includegraphics[height=3cm,width=4cm]{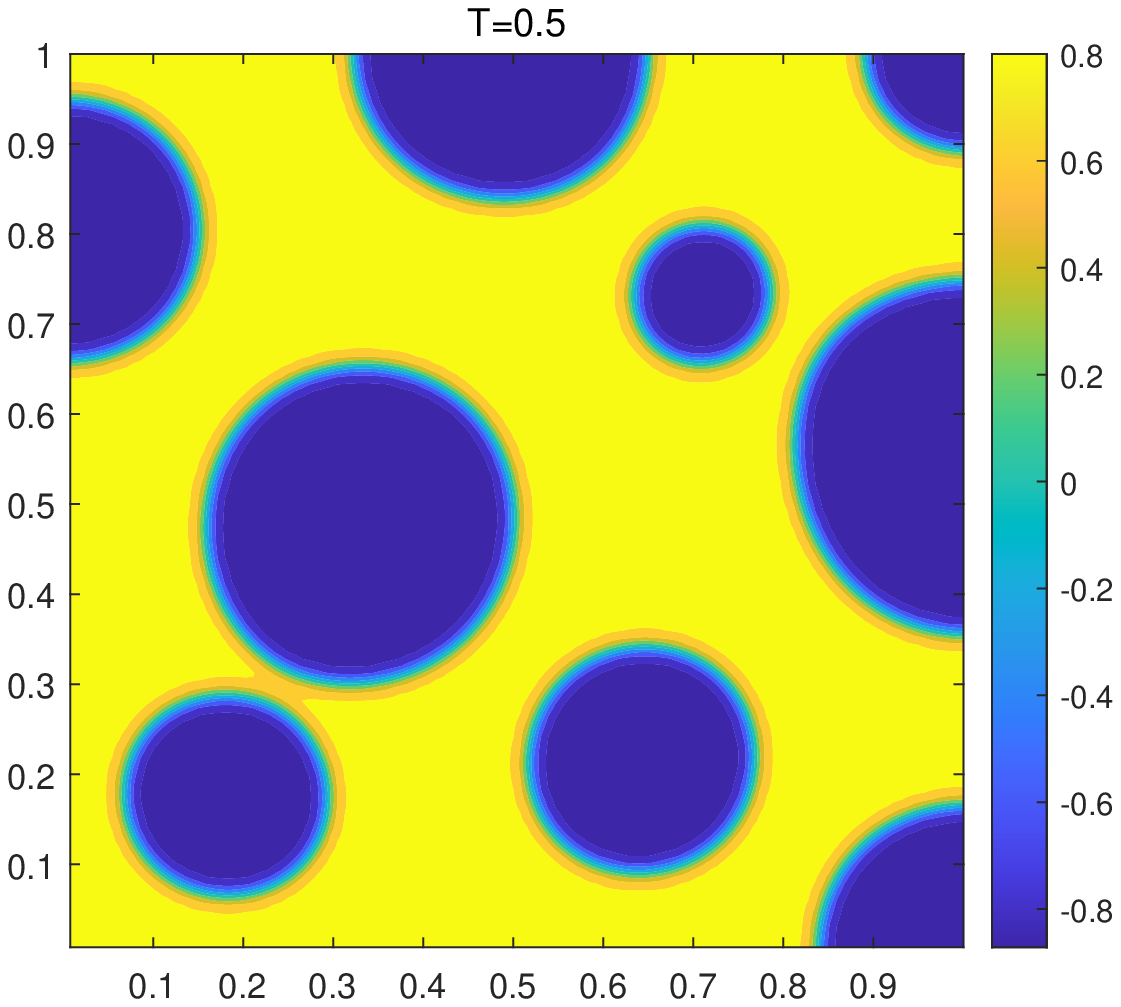}   
}

\subfigure[t=1]{
\includegraphics[height=3cm,width=4cm]{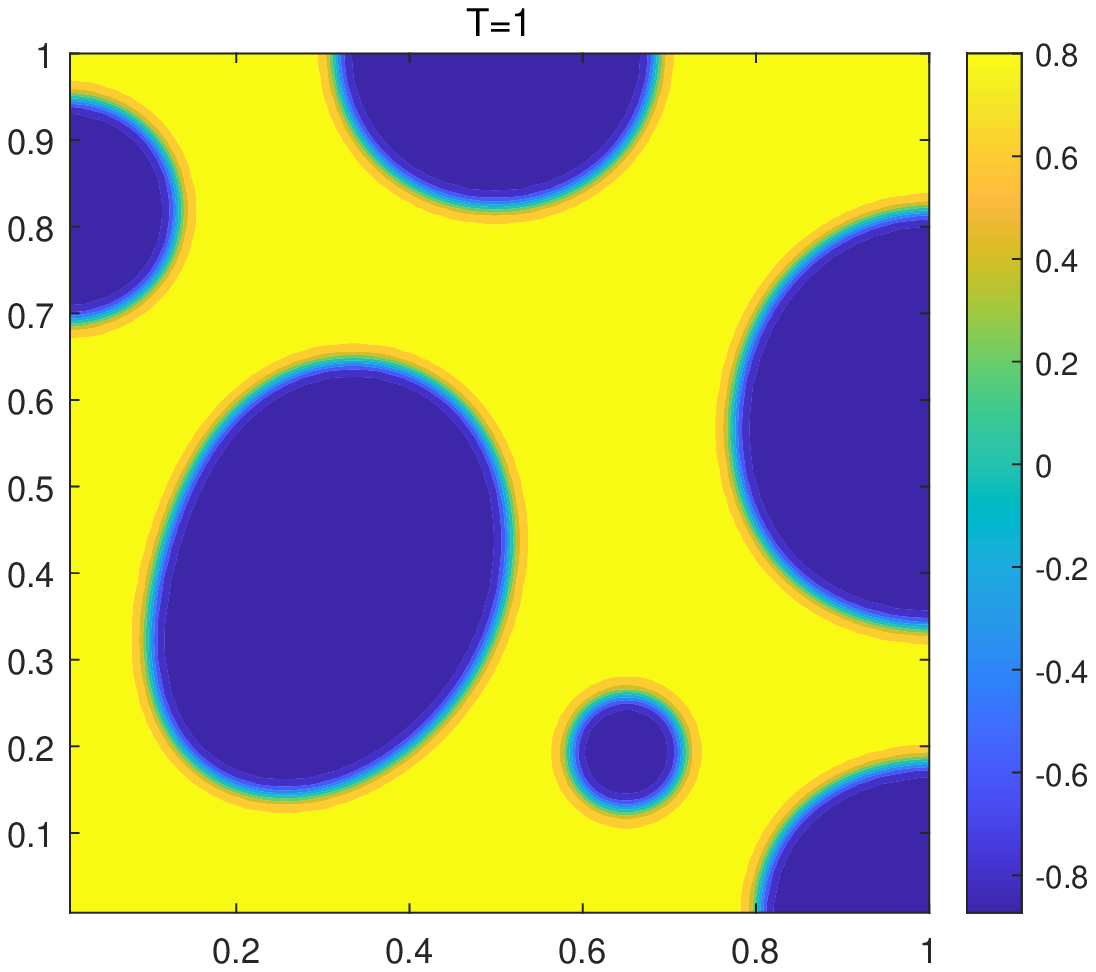}   
}
\subfigure[t=5]{
\includegraphics[height=3cm,width=4cm]{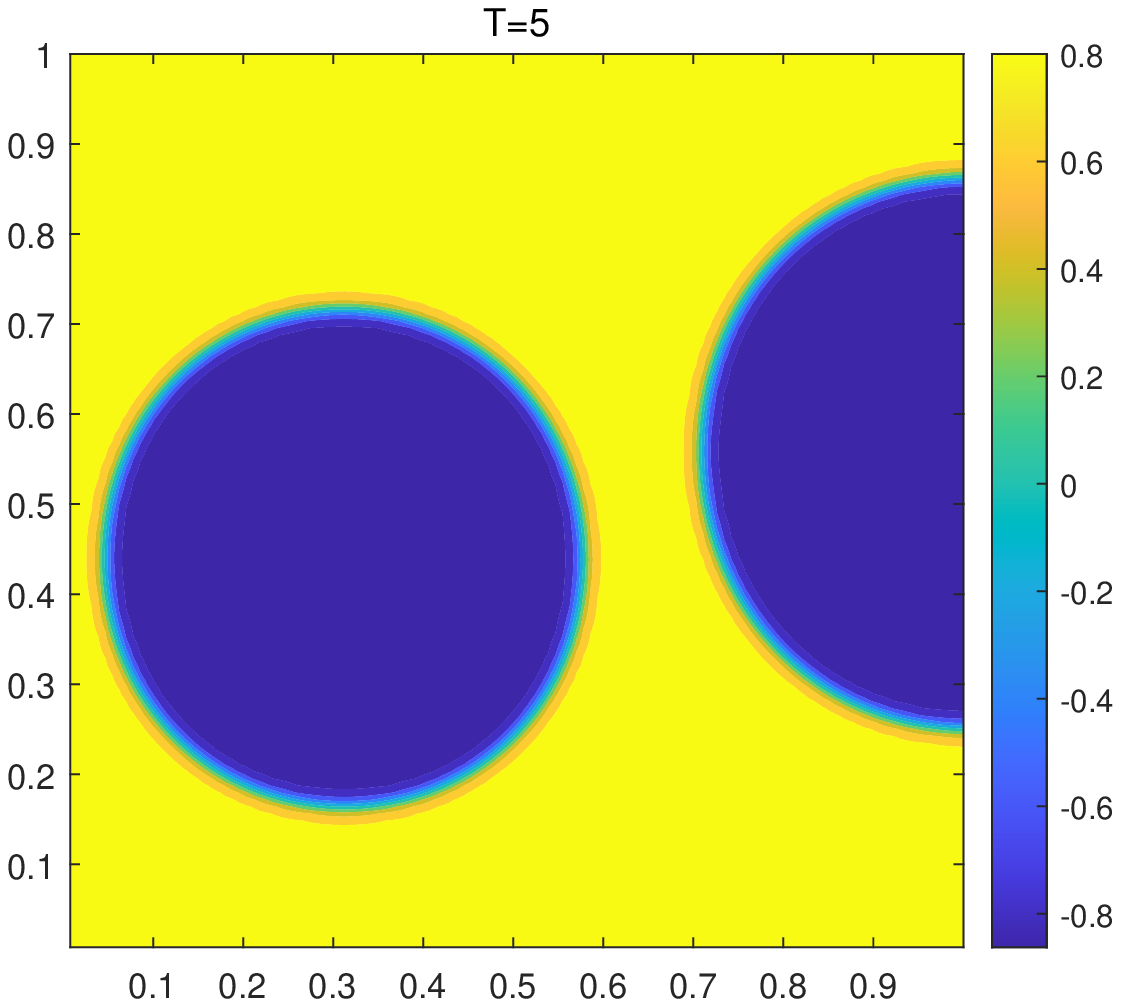}   
}
\subfigure[t=16]{
\includegraphics[height=3cm,width=4cm]{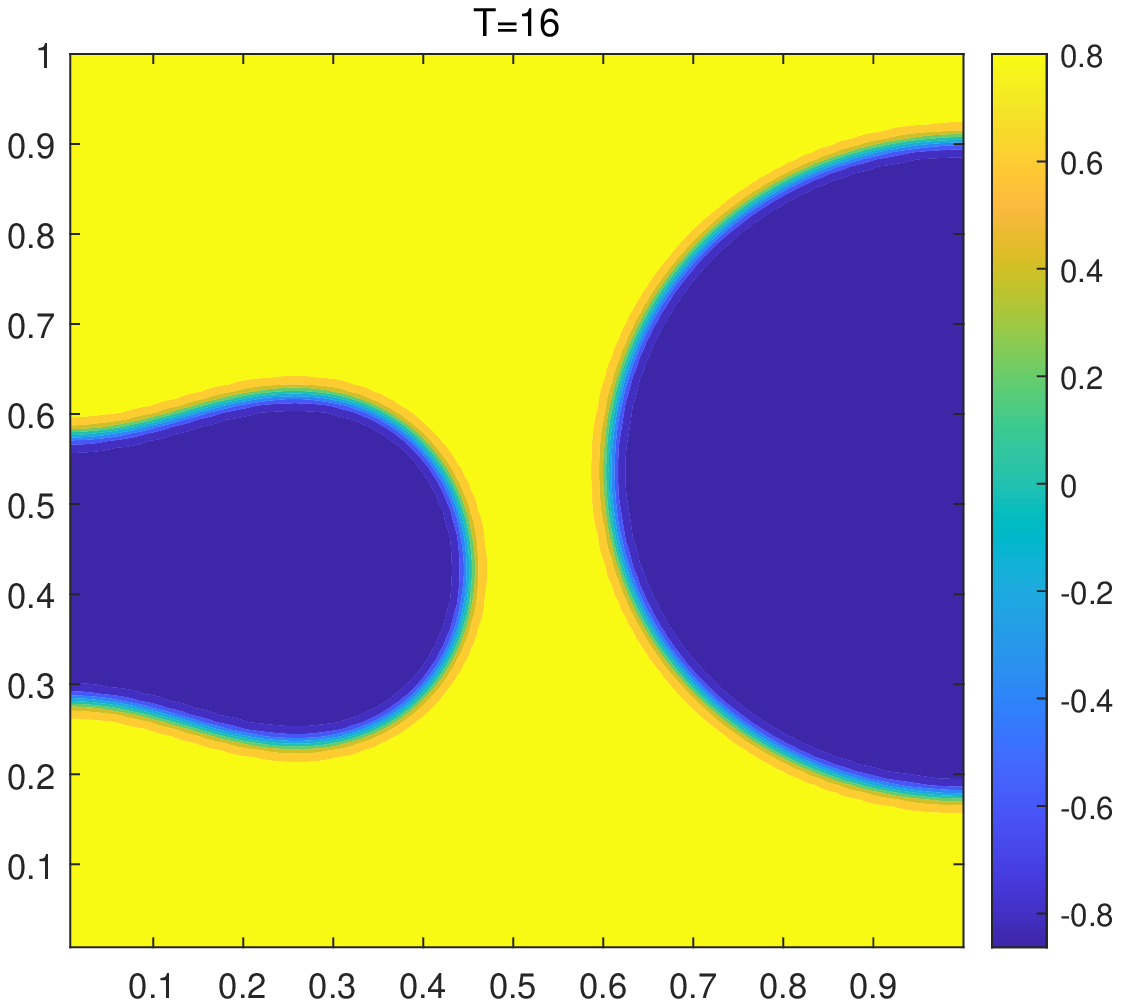}   
}

\subfigure[t=18]{
\includegraphics[height=3cm,width=4cm]{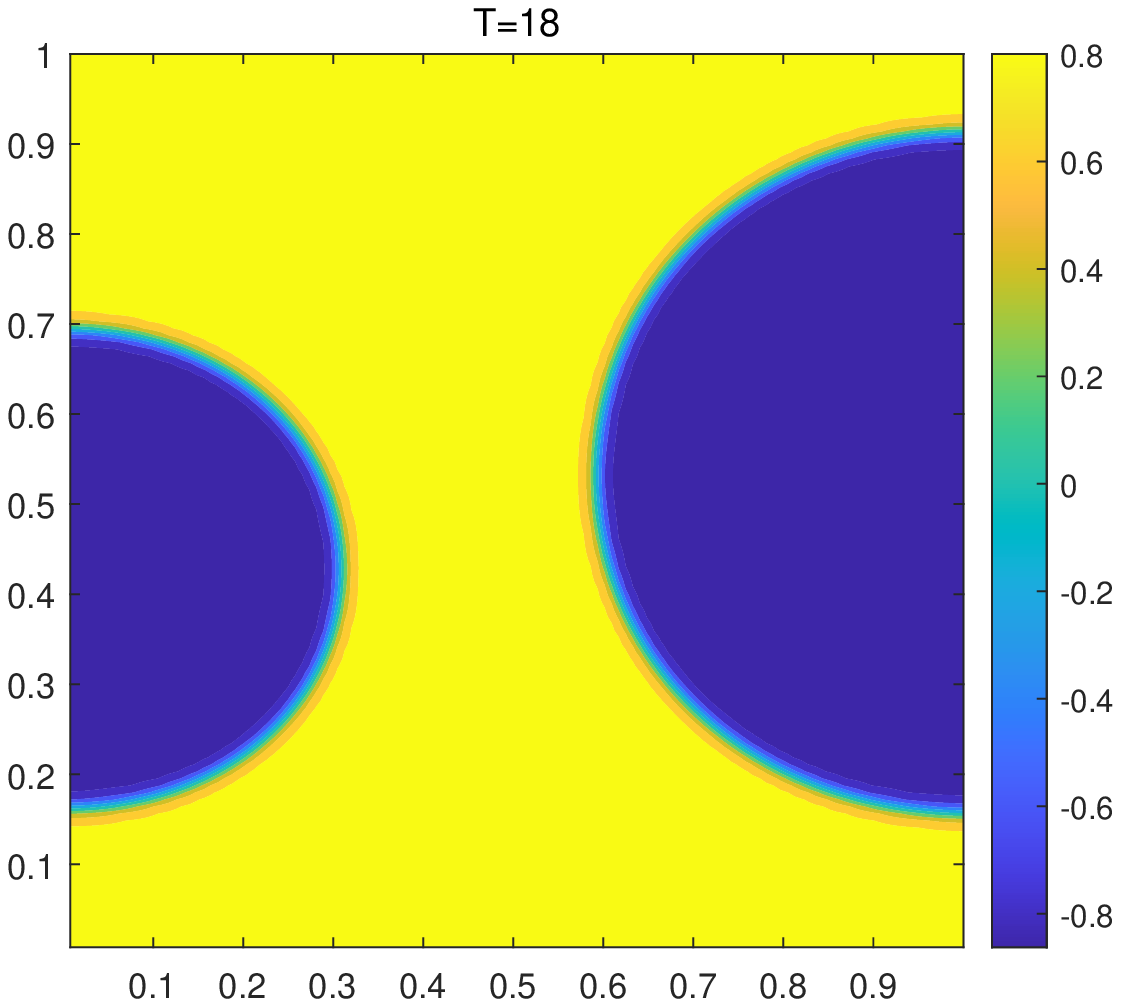}   
}
\subfigure[t=28]{
\includegraphics[height=3cm,width=4cm]{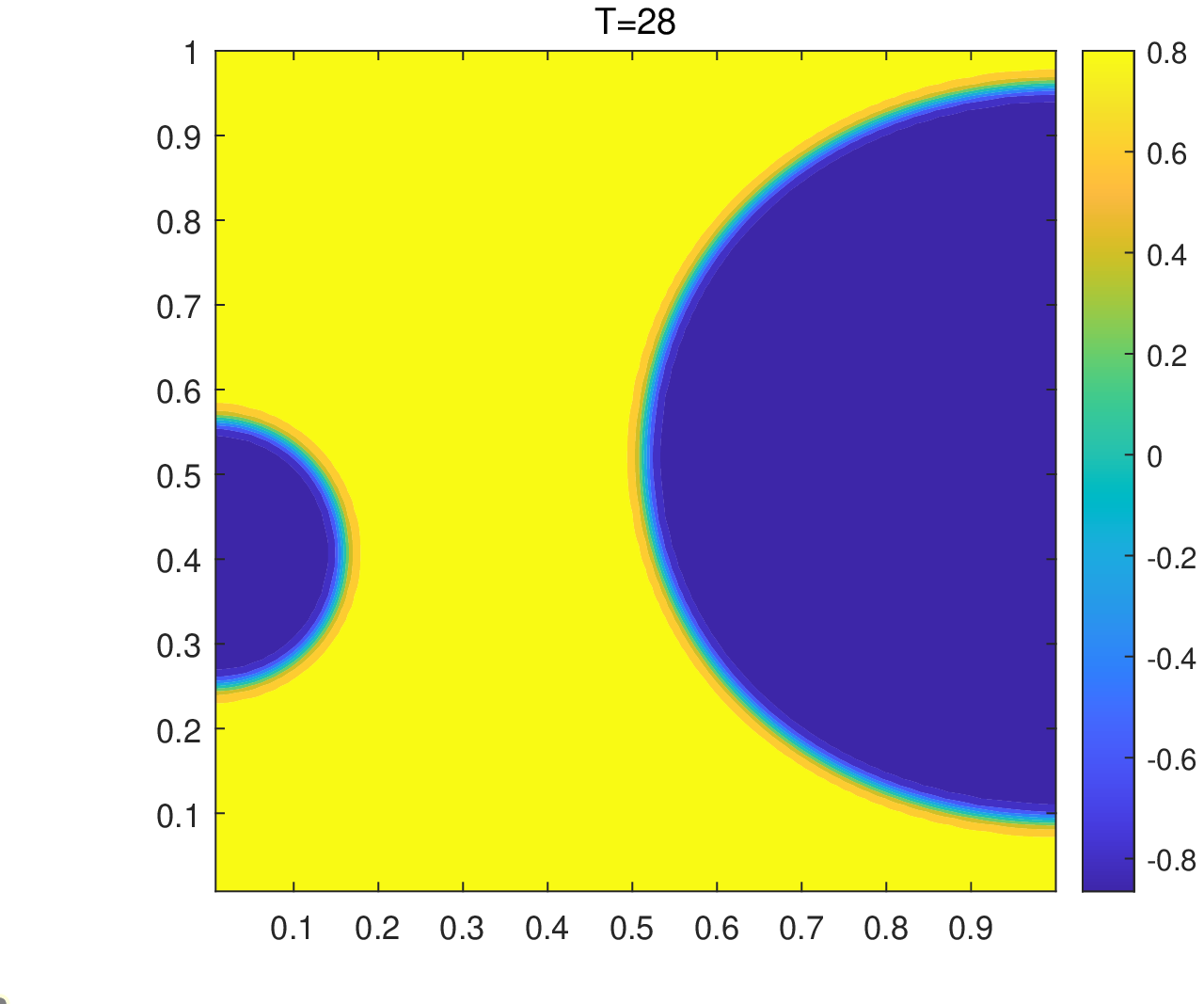}   
}
\subfigure[t=30]{
\includegraphics[height=3cm,width=4cm]{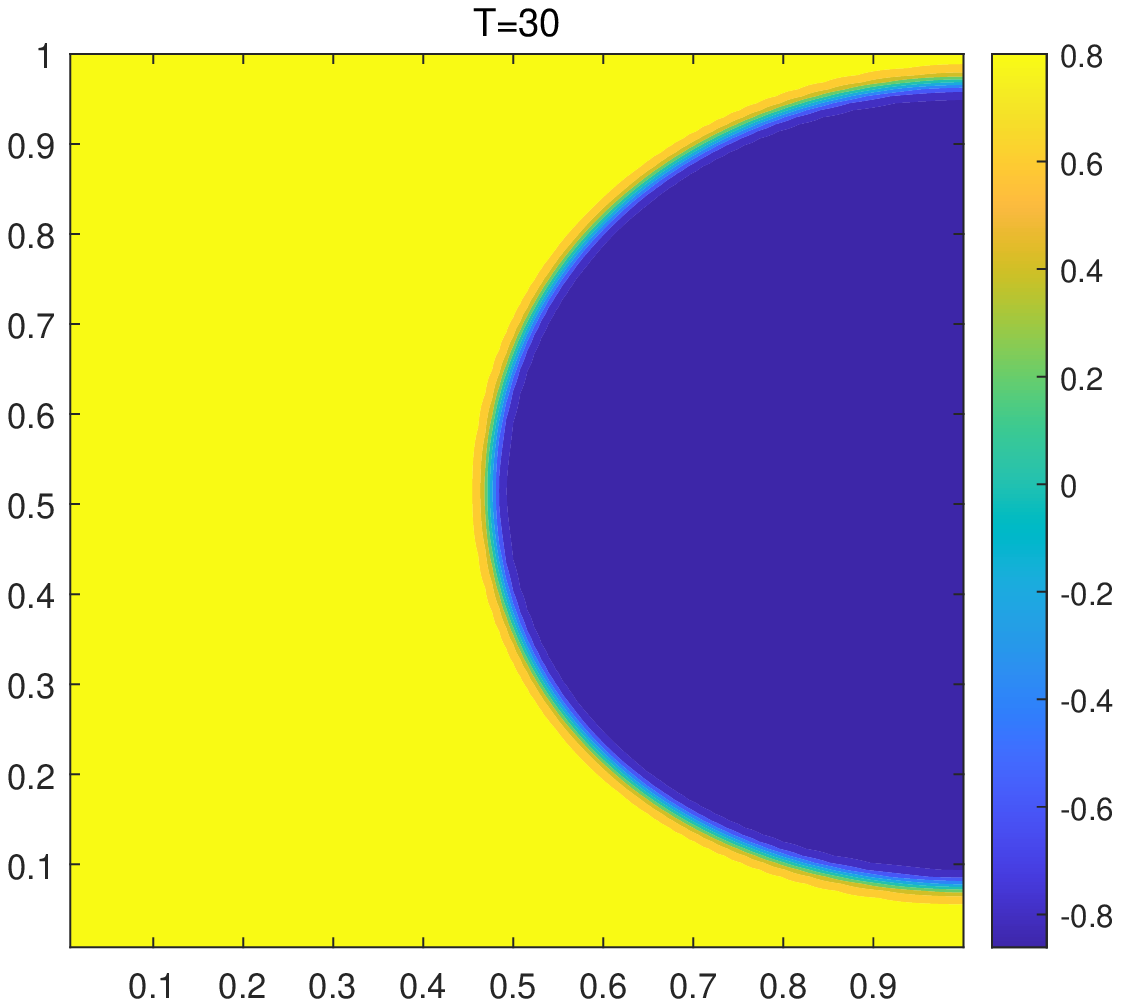}   
}
\caption{Evolution of the phase variable at selected times, with initial condition \eqref{rand initial}. Yellow corresponds to~$\phi\approx 0.8$~and blue corresponds to~$\phi\approx -0.8$~.}
	\label{Spinodal decomposition}
	\end{figure}

For the system with polynomial energy functional~\eqref{polynomial energy-1}, relevant numerical experiments have shown that concentration variable~$\phi$ can overshoot the values $\pm 1$\cite{wise10, collins13, feng12}. Meanwhile, a strict separation property is observed in Figure \ref{Spinodal decomposition}, so that a uniform distance exists between the phase variable extrema and the singular limit values $\pm 1$. This numerical result gives a clear evidence that the singular logarithmic energy potential model leads to a much more powerful phase separation property than the polynomial approximation one. 

\begin{figure}[h]
\center
\subfigure[Energy]{
\includegraphics[height=4cm,width=6cm]{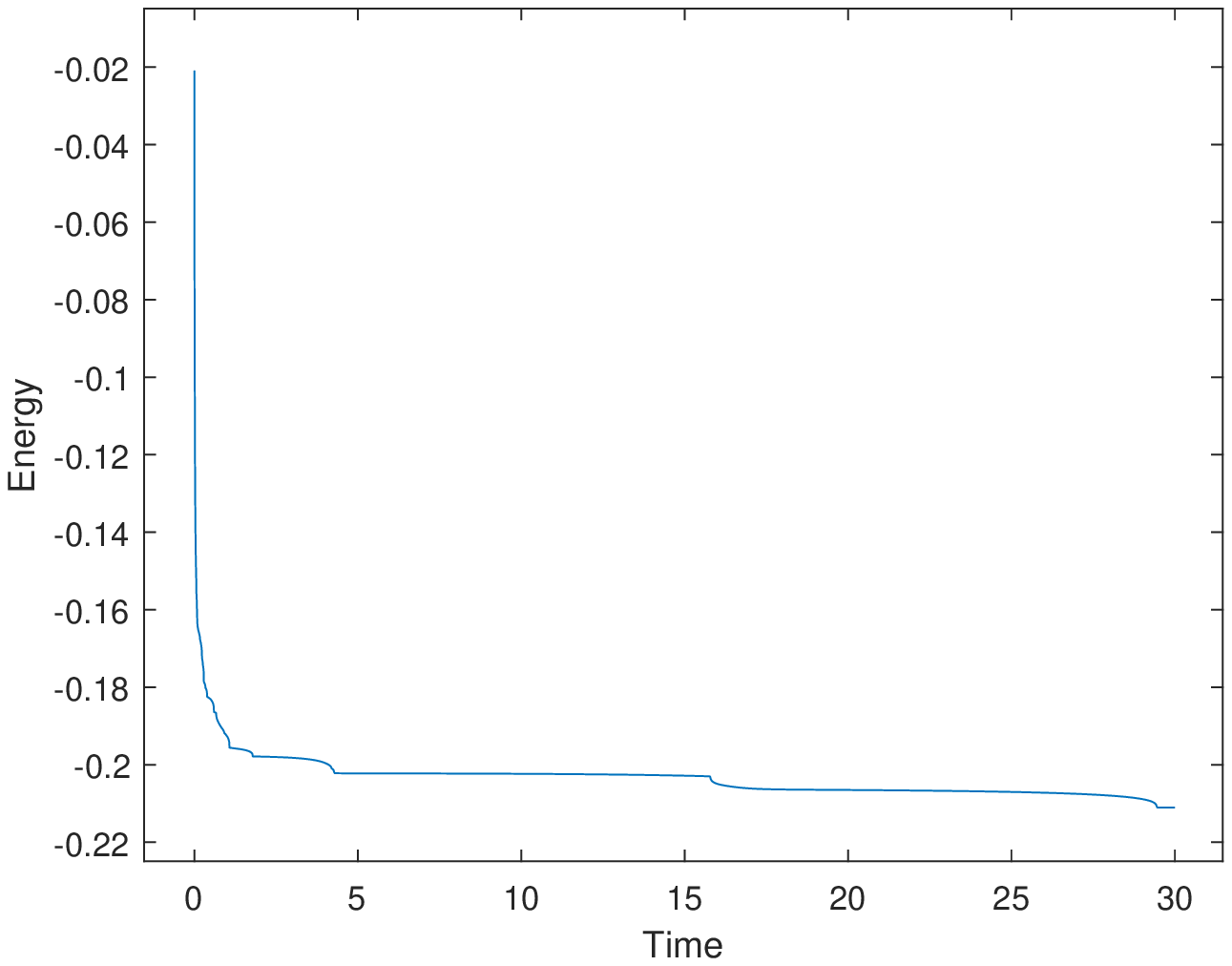}   
}\quad
\subfigure[Mass]{
\includegraphics[height=4cm,width=6cm]{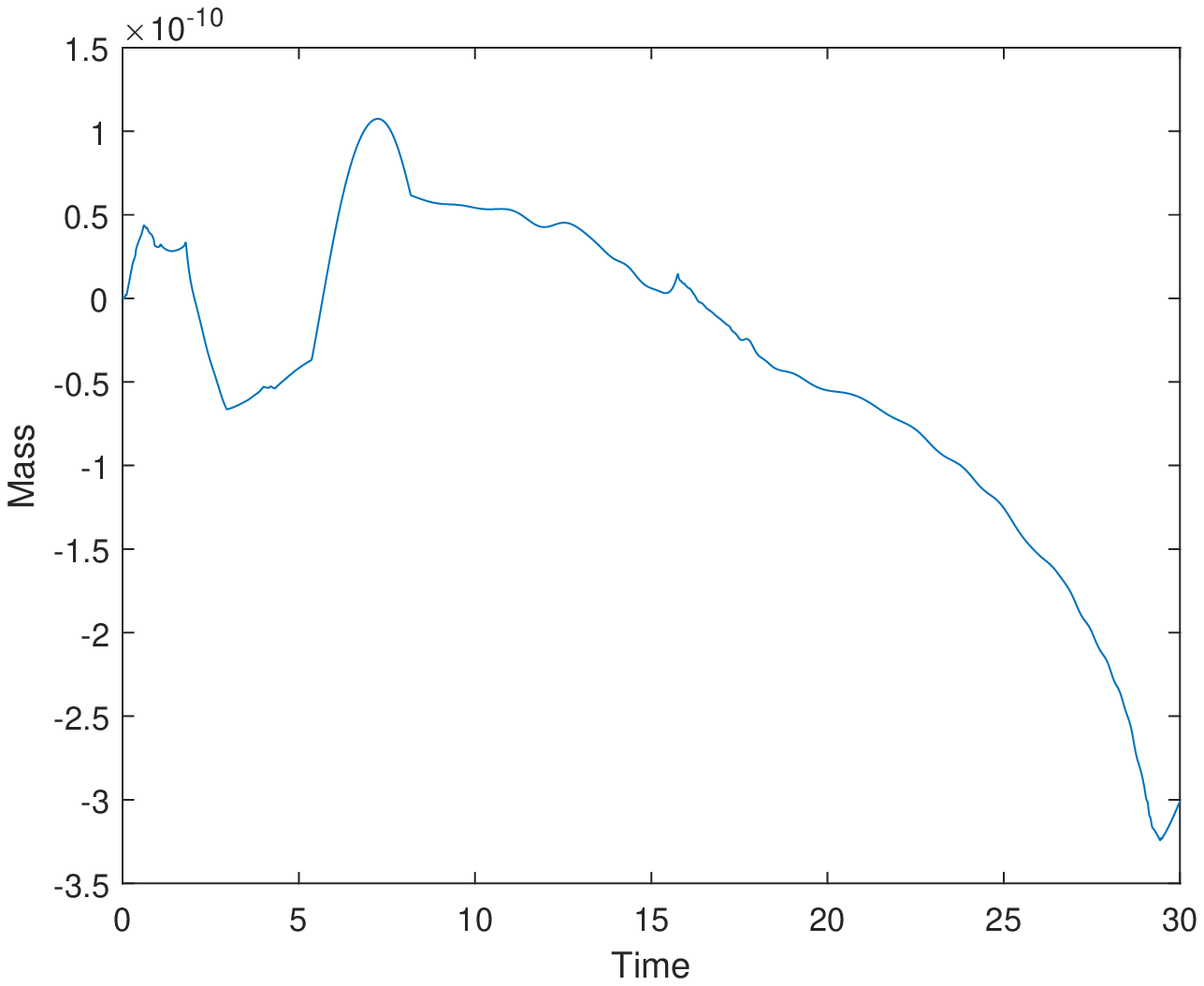}   
}
\caption{Energy decay and mass conservation with rand initial condition \eqref{rand initial}.}\label{energy decay and mass conservation}
\end{figure}

The left plot of Figure \ref{energy decay and mass conservation} illustrates the evolution of discrete energy in terms of time, which confirms the energy dissipation property. The rough estimate of the mass difference computed as $\bar{\phi}^n-\bar{\phi}^0$ is displayed in the right plot of Figure \ref{energy decay and mass conservation}, which numerically verifies the mass conservation property up to a machine error.

In addition, similar computations have been performed with trigonometric initial conditions,
\begin{equation}
\phi^0=0.9*\left(\frac{\left(1-\cos\left(4\pi x\right)\right)\left(1-\cos\left(4\pi y\right)\right)}{2}-1\right),
\label{tri initial data}
\end{equation}
the bound for which is adjusted to make the logarithmic energy meaningful. Parameters are the same as the last numerical test with random initial condition \eqref{rand initial}. Evolution of $\phi$ at selected time instants is displayed in Figure \ref{tri Spinodal decomposition}. Numerical verifications of energy dissipation and mass conservation are presented in Figure \ref{tri energy decay and mass conservation}. 

\begin{figure}[h]
\center
\subfigure[T=0]{
\includegraphics[height=3cm,width=4cm]{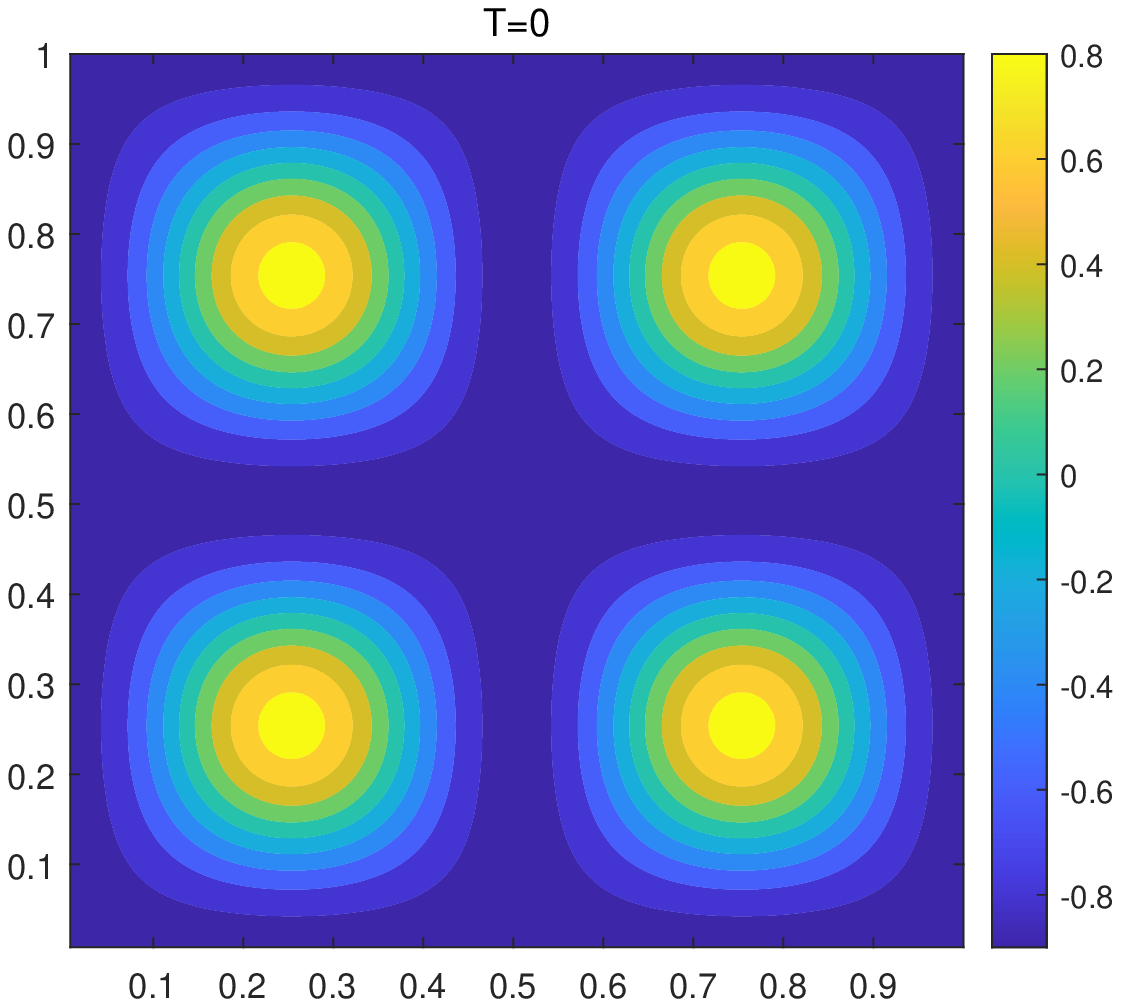}   
}
\subfigure[T=0.0006]{
\includegraphics[height=3cm,width=4cm]{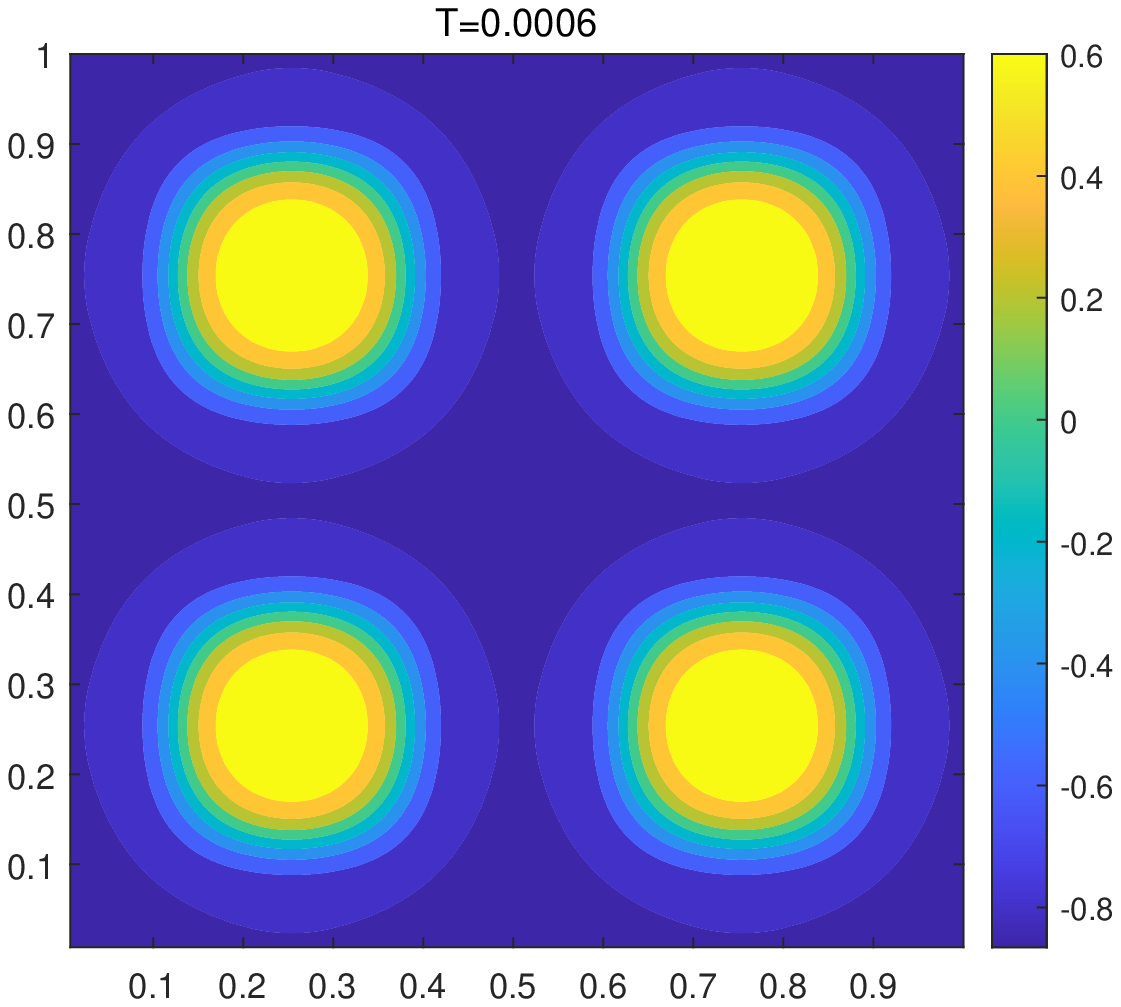}   
}
\subfigure[T=0.002]{
\includegraphics[height=3cm,width=4cm]{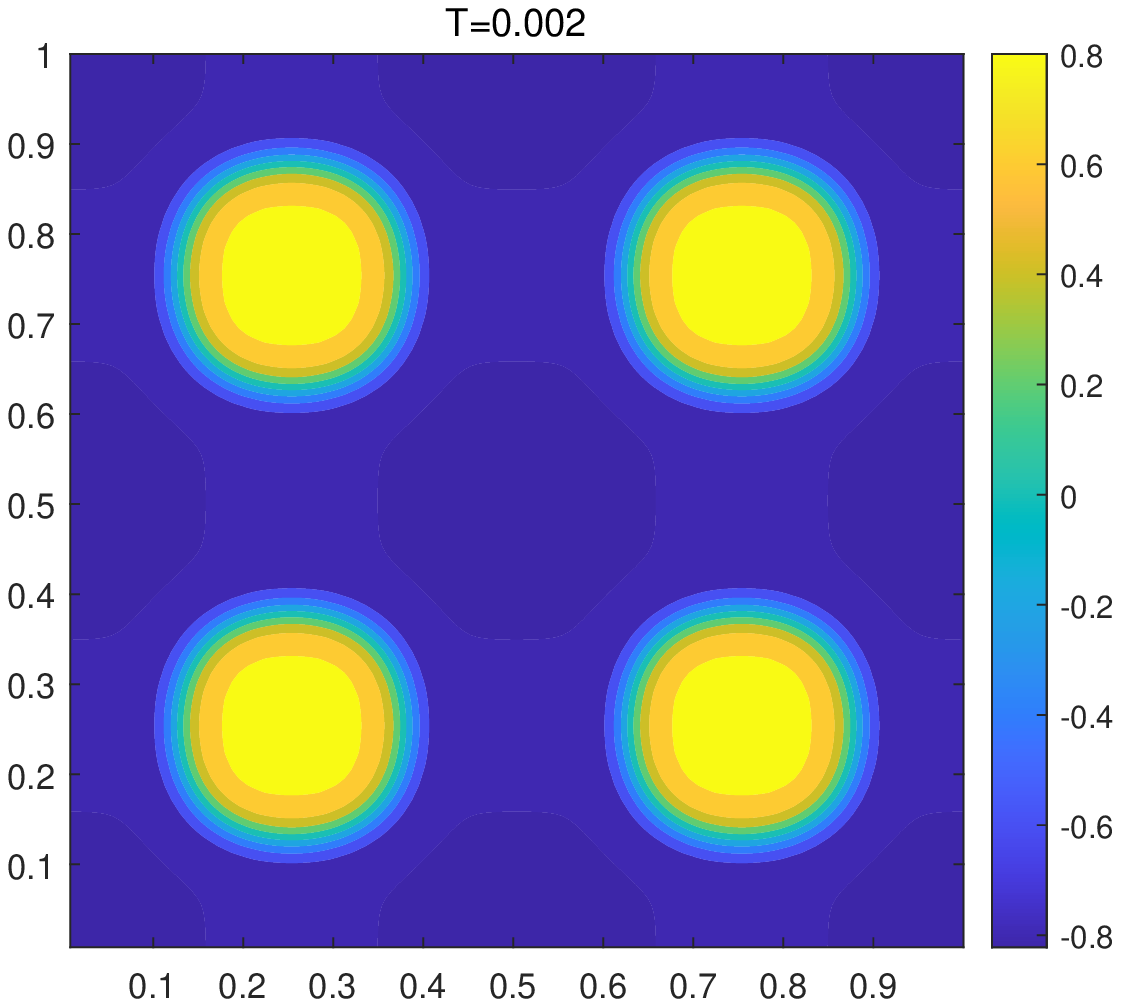}   
}

\subfigure[T=0.4]{
\includegraphics[height=3cm,width=4cm]{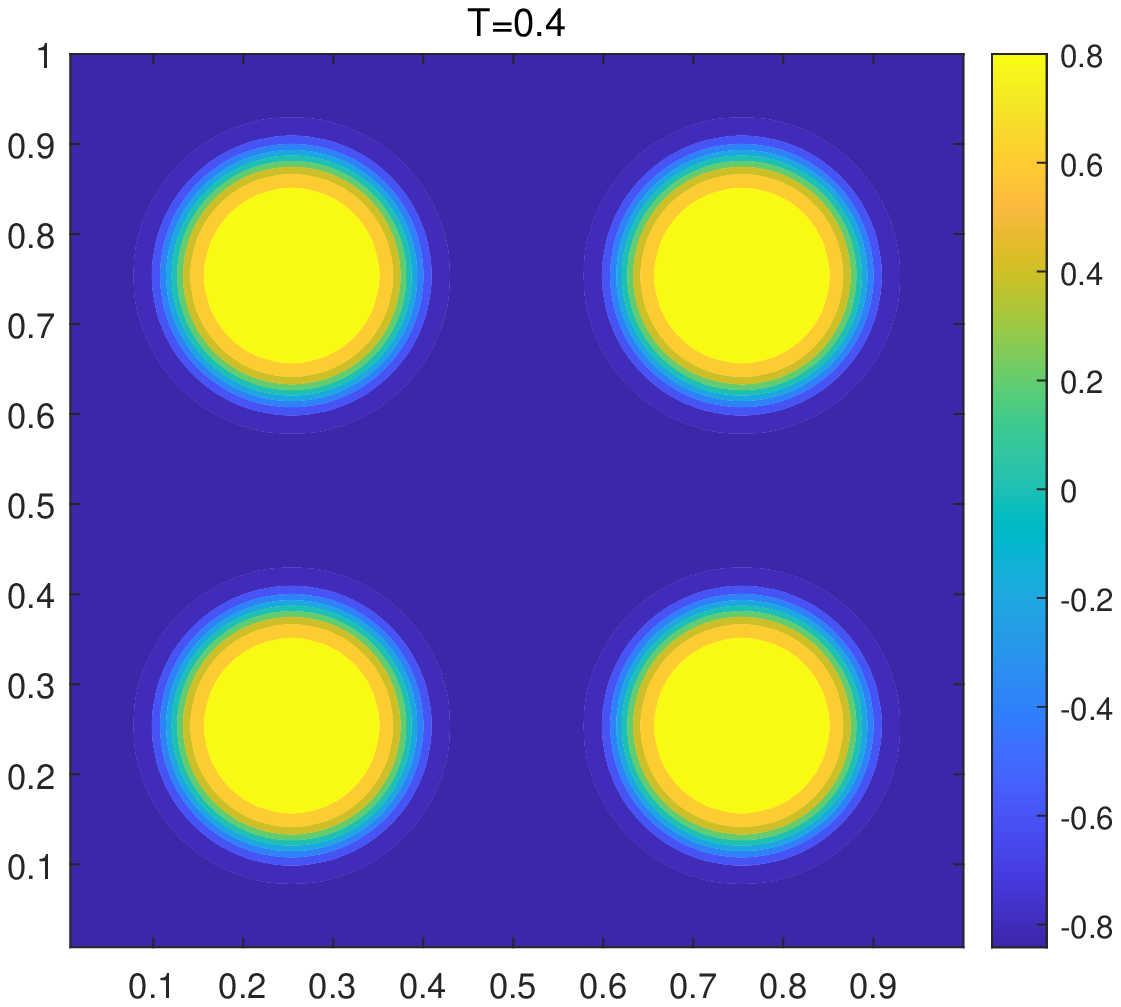}   
}
\subfigure[T=4]{
\includegraphics[height=3cm,width=4cm]{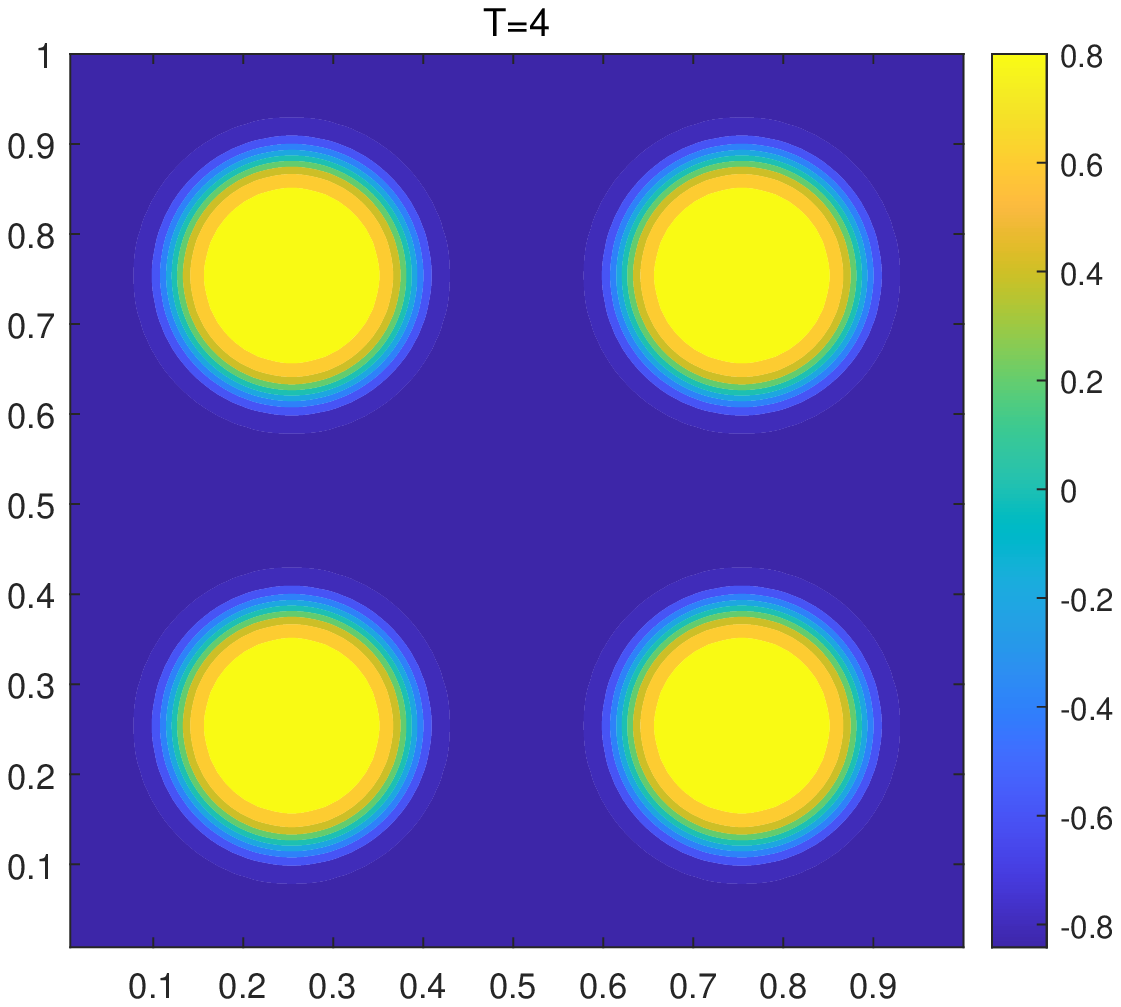}   
}
\caption{Evolution of the phase variable at selected time instants with trigonometric condition \eqref{tri initial data}. } \label{tri Spinodal decomposition}
\end{figure}

\begin{figure}[h]
\center
\subfigure[Energy]{
\includegraphics[height=3cm,width=4cm]{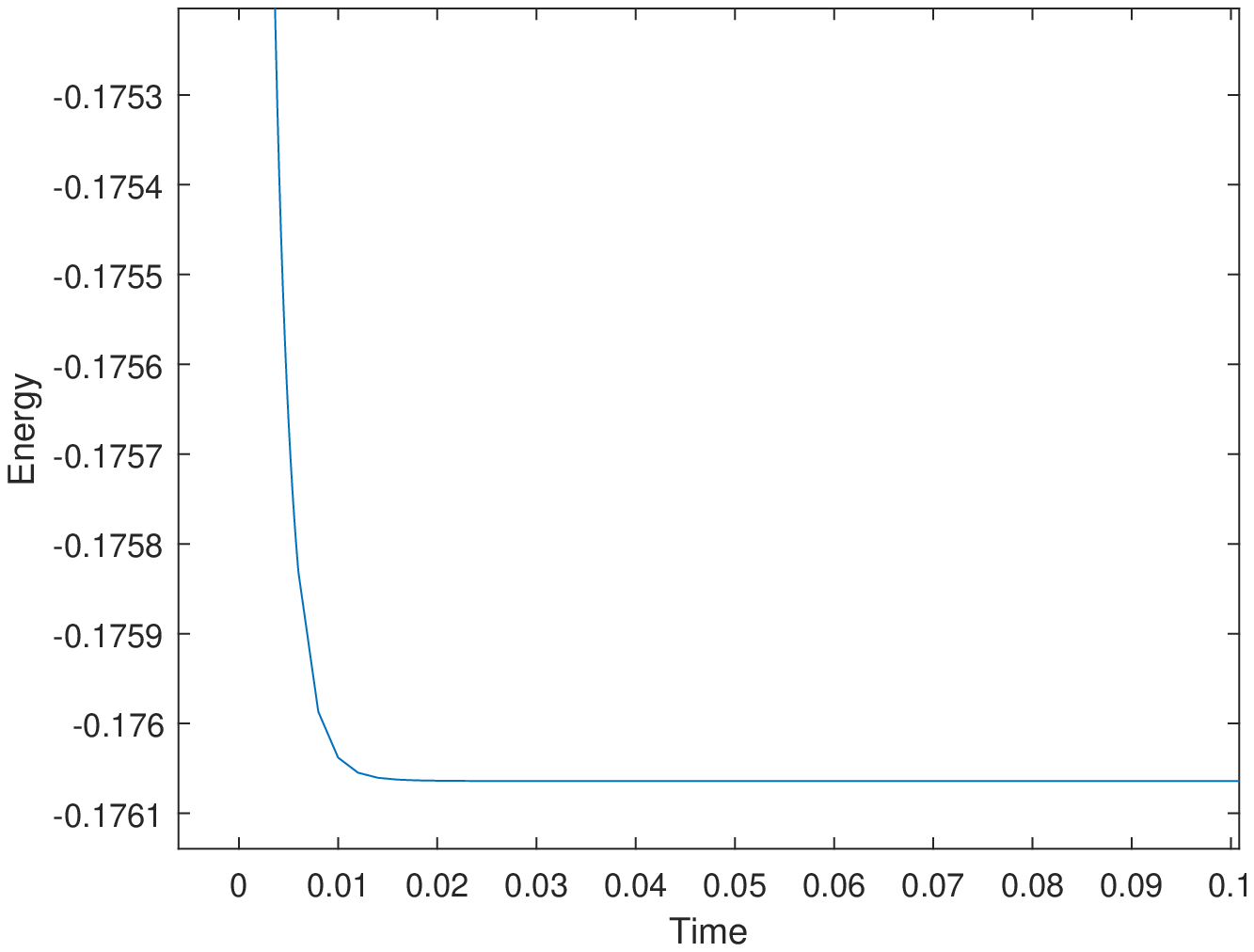}   
}\quad
\subfigure[Mass]{
\includegraphics[height=3cm,width=4cm]{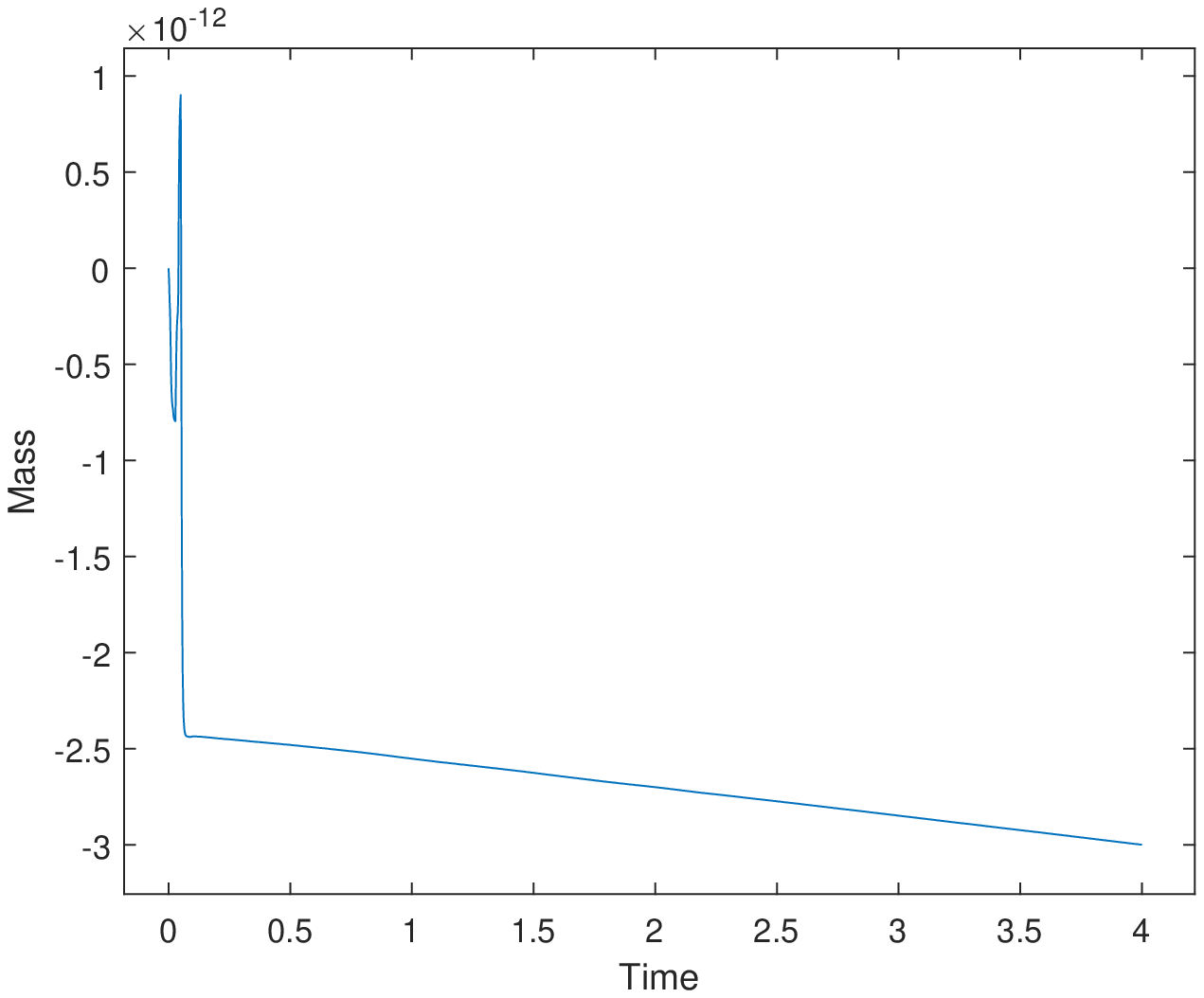}   
}
\caption{Test of energy decay and mass conservation with trigonometric initial condition \eqref{tri initial data}.}\label{tri energy decay and mass conservation}
\end{figure}

It is observed that the concentration variable $\phi$ stays stable, barely changing for a very long time. The same is true for the free energy. The left plot of Figure~\ref{tri energy decay and mass conservation} illustrates the energy evolution from~$t=0$~to~$t=0.1$, since there is an extremely sharp decline in this range. However, the free energy is dissipated up to~$t=4$.

\subsection{Convergence order}\label{Convergence order}

Now we present a convergence test for the numerical scheme \eqref{discrete-CHS-0.1} -- \eqref{discrete-CHS-0.4}, as $s,h \rightarrow 0$. Smooth initial data is taken via
\begin{equation}
\phi^0=0.24*\cos \left(2\pi x\right) \cos \left(2\pi y\right) + 0.4*\cos \left(\pi x\right) \cos \left( 3\pi y \right) . 
\label{convergence_initial}
\end{equation}
The diffuse interface coefficient is set to $\varepsilon=0.05$. We expect that the global error is of order $e_{t=T}=O(s)+O(h^2)$. In turn, with a refinement path $s=Ch^2$, we see that $e_{t=T}=O(h^2)$. In practice, we set $s=0.02h^2$, the tolerant error for the FAS approach is set as $\tau = 10^{-8}$ and the final time is given by $T=0.02$. Considering the multiple grid size and the definition of the cell-center function, the following error expression is proposed:
\begin{equation}
e^{h-h/2}_{i,j}=\phi^h_{i,j}-\frac{1}{4}\left(\phi^{h/2}_{2i,2j}+\phi^{h/2}_{2i-1,2j}+\phi^{h/2}_{2i,2j-1)}+\phi^{h/2}_{2i-1,2j-1}\right).
\end{equation}

The results are displayed in Table \ref{Convergence}, which confirms the second order accuracy in space, as well as the first order accuracy in time. 

\begin{table}[h]
\caption{Numerical convergence test with initial data \eqref{tri initial data}}\label{Convergence}.
\center
\begin{tabular}{@{}lllll@{}}
\toprule
Grid size       & $16^2-32^2 $    & $32^2-64^2$     & $64^2-128^2$    & $128^2-256^2$   \\ 
$L^2$ error     & 1.9287E-02      & 4.5851E-03      & 1.1269E-03      & 2.8061E-04  \\
$L^2$ rate      &                 & 2.0727          & 2.0245          & 2.0057     \\
$L^\infty$ error & 5.1703E-02     & 1.1344E-02      & 2.9196E-03      & 7.3025E-05  \\
$L^\infty$ rate &                 & 2.1882          & 1.9581          & 1.9993     \\ \bottomrule
\end{tabular}
\end{table}

\section{Conclusions}  \label{sec:conclusion} 
In this paper, we have presented a fully discrete finite difference numerical scheme of the Cahn-Hilliard-Stokes (CHS) system with Florry-Huggins energy potential. A convex splitting technique is applied to treat the chemical potential, combined with a semi-implicit computation of the nonlinear convection term, and an implicit update of the static Stokes equation. An implicit treatment of the logarithmic term ensures the positivity-preserving property, which comes from its singular nature as the phase variable approaches the singular limit values. An unconditional energy stability is derived by a careful energy estimate.  Moreover, an optimal rate convergence analysis and error estimate has been established at a theoretical level, with the help of higher order consistency analysis, combined with rough and refined error (RRE) estimates. Some numerical experiments have also been presented, which demonstrate the theoretical properties of the proposed numerical scheme.

	\section*{Acknowledgements}
	
C.~Wang is partially supported by the NSF DMS-2012269. S.M. Wise is partially supported by the NSF DMS-2012634. Z.R.~Zhang is partially supported by the NSFC No.11871105 and Science Challenge Project No. TZ2018002. In addition, Y.Z.~Guo also thanks the Hong Kong Polytechnic University for the generous support and hospitality during his visit.

\appendix
\section{Proof of Lemma \ref{rough convergence lemma}}\label{appendix A}
Taking a discrete inner product with \eqref{error equation-1.1} by $\tilde{\mu}^{n+1}$ leads to 
\begin{equation}
\begin{aligned}
\frac{1}{s}(\tilde{\phi}^{n+1},\tilde{\mu}^{n+1})&+\|\nabla_h \tilde{\mu}^{n+1}\|_2^2-(A_h\phi^n\nabla_h\tilde{\mu}^{n+1},\tilde{\boldsymbol{u}}^{n+1})\\
&=(A_h\tilde{\phi}^n\hat{\boldsymbol{U}}^{n+1},\nabla_h\tilde{\mu}^{n+1})+(\tau_{\phi}^{n+1},\tilde{\mu}^{n+1})+\frac{1}{s}(\tilde{\phi}^n,\tilde{\mu}^{n+1}).
\end{aligned}
\label{proof lemma 1}
\end{equation}
Based on the mean-free property \eqref{consistency-8.3} of truncation error, the following estimate could be obtained
\begin{equation}
(\tau_{\phi}^{n+1},\tilde{\mu}^{n+1}) \leq \|\tau_{\phi}^{n+1}\|_{-1,h}\cdot \|\nabla_h \tilde{\mu}^{n+1}\|_2 \leq 2\|\tau_{\phi}^{n+1}\|_{-1,h}^2+\frac{1}{8}\|\nabla_h \tilde{\mu}^{n+1}\|_2^2 . \label{proof lemma 2}
\end{equation}
For the $(\tilde{\phi}^n,\tilde{\mu}^{n+1})$ term, a similar analysis is valid
\begin{equation}
\frac{1}{s}(\tilde{\phi}^n,\tilde{\mu}^{n+1}) \leq \frac{1}{s}\|\tilde{\phi}^n\|_{-1,h} \cdot \|\nabla_h \tilde{\mu}^{n+1}\|_2 \leq \frac{2}{s^2}\|\tilde{\phi}^n\|_{-1,h}^2+\frac{1}{8}\|\nabla_h \tilde{\mu}^{n+1}\|_2^2.\label{proof lemma 3}
\end{equation}
For the first term of right hand of \eqref{proof lemma 1}, we see that
\begin{equation}
\begin{aligned}
(A_h\tilde{\phi}^n\hat{\boldsymbol{U}}^{n+1},\nabla_h\tilde{\mu}^{n+1}) &\leq \|\hat{\boldsymbol{U}}^{n+1}\|_\infty \cdot \|\tilde{\phi}^n\|_2 \cdot \|\nabla_h\tilde{\mu}^{n+1}\|_2 \leq C^*\|\tilde{\phi}^n\|_2 \cdot \|\nabla_h\tilde{\mu}^{n+1}\|_2 \\
&\leq 2(C^*)^2\|\tilde{\phi}^n\|_2^2 + \frac{1}{8}\|\nabla_h\tilde{\mu}^{n+1}\|_2^2. \label{proof lemma 4}
\end{aligned}
\end{equation}
For the last term of right hand of \eqref{proof lemma 1}, we begin with the following identity 
\begin{equation}
-A_h\phi^n\nabla_h\tilde{\mu}^{n+1}=\frac{1}{\gamma}\left((-\Delta_h+I)\tilde{\boldsymbol{u}}^{n+1}+\nabla_h\tilde{p}^{n+1}-\tau_v^{n+1}\right)+A_h \tilde{\phi}^n \nabla_h \hat{\mathcal{V}}^{n+1} , 
\label{proof lemma 5}
\end{equation}
so that the following estimates are available
\begin{equation}
\begin{aligned}
-(A_h&\phi^n\nabla_h\tilde{\mu}^{n+1},\tilde{\boldsymbol{u}}^{n+1})=\left(\frac{1}{\gamma}(\tilde{\boldsymbol{u}}^{n+1}-\Delta_h\tilde{\boldsymbol{u}}^{n+1}+\nabla_h\tilde{p}^{n+1}-\tau_v^{n+1})+A_h \tilde{\phi}^n \nabla_h \hat{\mathcal{V}}^{n+1},\tilde{\boldsymbol{u}}^{n+1}\right) \\
=&\frac{1}{\gamma}\|\tilde{\boldsymbol{u}}^{n+1}\|_2^2+\frac{1}{\gamma}\|\nabla_h\tilde{\boldsymbol{u}}^{n+1}\|_2^2-\frac{1}{\gamma}(\tau_v^{n+1}, \tilde{\boldsymbol{u}}^{n+1})+(A_h \tilde{\phi}^n \nabla_h \hat{\mathcal{V}}^{n+1},\tilde{\boldsymbol{u}}^{n+1})\\
\geq & \frac{1}{\gamma}\|\tilde{\boldsymbol{u}}^{n+1}\|_2^2+\frac{1}{\gamma}\|\nabla_h\tilde{\boldsymbol{u}}^{n+1}\|_2^2-\frac{1}{\gamma}\|\tau_v^{n+1}\|_2\cdot \|\tilde{\boldsymbol{u}}^{n+1}\|_2-\|\nabla_h \hat{\mathcal{V}}^{n+1}\|_\infty\cdot \|\tilde{\phi}^n\|_2 \cdot \|\tilde{\boldsymbol{u}}^{n+1}\|_2\\
\geq & \frac{1}{\gamma}\|\tilde{\boldsymbol{u}}^{n+1}\|_2^2+\frac{1}{\gamma}\|\nabla_h\tilde{\boldsymbol{u}}^{n+1}\|_2^2-\frac{2}{\gamma}\|\tau_v^{n+1}\|_2^2-\frac{1}{8\gamma}\|\tilde{\boldsymbol{u}}^{n+1}\|_2^2-C\|\tilde{\phi}^n\|_2^2-\frac{1}{8\gamma}\|\tilde{\boldsymbol{u}}^{n+1}\|_2^2\\
\geq & \frac{3}{4\gamma}\|\tilde{\boldsymbol{u}}^{n+1}\|_2^2+\frac{1}{\gamma}\|\nabla_h\tilde{\boldsymbol{u}}^{n+1}\|_2^2-\frac{2}{\gamma}\|\tau_v^{n+1}\|_2^2-C\|\tilde{\phi}^n\|_2^2.\label{proof lemma 6}
\end{aligned}
\end{equation}
Meanwhile, an application of intermediate value theorem implies a point-wise representation:  
\begin{equation} 
   \ln ( 1 + \hat{\Phi}^{n+1} )  - \ln (1 + \phi^{n+1} ) 
   = \frac{\tilde{\phi}^{n+1} }{ 1 + \eta^{(n+1)} } ,  \quad 
   \mbox{$\eta^{(n+1)}$ is between $\phi^{n+1}$ and $\hat{\Phi}^{n+1}$} . 
   \label{proof lemma 7}   
\end{equation} 
By the point-wise bound that $-1 < \phi^{n+1} , \hat{\Phi}^{n+1} < 1$, we have $ 0 < 1 + \eta^{(n+1)}  < 2 $ so that $\frac{1}{1+\eta^{(n+1)}} > \frac{1}{2}$,
\begin{equation}
(\ln(1 + \hat{\Phi}^{n+1} )  - \ln (1 +\phi^{n+1}), \tilde{\phi}^{n+1}) =(\frac{\tilde{\phi}^{n+1}}{1+\eta^{(n+1)}}, \tilde{\phi}^{n+1}) \ge \frac{1}{2} \|\tilde{\phi}^{n+1}\|_2^2 . 
\label{proof lemma 8} 
\end{equation}
A similar analysis could be derived: 
\begin{equation}
(-\ln(1 - \hat{\Phi}^{n+1} )  + \ln (1 -\phi^{n+1}), \tilde{\phi}^{n+1}) \ge \frac{1}{2} \|\tilde{\phi}^{n+1}\|_2^2.
\label{proof lemma 9} 
\end{equation}
The two linear terms in the expansion of $( \tilde{\phi}^{n+1} , \tilde{\mu}^{n+1} )$ could be analyzed in a more straightforward way:
\begin{align} 
     & 
 - \theta_0 (\tilde{\phi}^n ,  \tilde{\phi}^{n+1})
  \ge - \frac{1}{2} \theta_0^2  \| \tilde{\phi}^n \|_2^2 - \frac{1}{2} \|  \tilde{\phi}^{n+1}  \|_2^2   , 
    \label{proof lemma 10}    
\\
     & 
 - (  \Delta_h \tilde{\phi}^{n+1} ,  \tilde{\phi}^{n+1} ) 
  = \|  \nabla_h \tilde{\phi}^{n+1}  \|_2^2  . 
    \label{proof lemma 11}        
\end{align}
Then we conclude that
\begin{equation}
(\tilde{\phi}^{n+1} , \tilde{\mu}^{n+1}) \geq \frac{1}{2}\|\tilde{\phi}^{n+1}\|^2_2+\frac{\varepsilon^2}{2}\|\nabla_h \tilde{\phi}^{n+1}\|^2_2- \frac{\theta_0^2}{2} \| \tilde{\phi}^n\|_2^2.\label{proof lemma 12}
\end{equation}
A substitution of \eqref{proof lemma 1}-\eqref{proof lemma 4}, \eqref{proof lemma 6} and \eqref{proof lemma 12} shows that
\begin{equation}
\begin{aligned}
&
\frac{1}{2}\|\tilde{\phi}^{n+1}\|^2_2+\frac{\varepsilon^2}{2}\|\nabla_h \tilde{\phi}^{n+1}\|^2_2+s(\frac{5}{8}\|\nabla_h \tilde{\mu}^{n+1}\|_2^2+\frac{3}{4\gamma}\|\tilde{\boldsymbol{u}}^{n+1}\|_2^2+\frac{1}{\gamma}\|\nabla_h\tilde{\boldsymbol{u}}^{n+1}\|_2^2)\\
&
\leq \frac{2}{s}\|\tilde{\phi}^n\|_{-1,h}^2+C\| \tilde{\phi}^n\|_2^2+s(\frac{2}{\gamma}\|\tau_v^{n+1}\|_2^2+2\|\tau_{\phi}^{n+1}\|_{-1,h}^2).
\end{aligned}\label{proof lemma con}
\end{equation}
For the right hand side of \eqref{proof lemma con}, the following estimates are available, which come from the a-priori assumption \eqref{a priori-1}: 
\begin{equation}
\begin{aligned}
\frac{2}{s}\|\tilde{\phi}^n\|_{-1,h}^2 &\leq \frac{C}{s}\|\tilde{\phi}^n\|_2^2 \leq C(s^\frac{11}{4}+h^\frac{11}{4}) , 
\\
C\|\tilde{\phi}^n\|_2^2 &\leq C(s^\frac{15}{4}+h^\frac{15}{4}) , 
\\
s(\frac{2}{\gamma}\|\tau_v^{n+1}\|_2^2+2\|\tau_{\phi}^{n+1}\|_{-1,h}^2) &\leq C(s^5+h^5),
\end{aligned}
\end{equation}
where the fact that $\| f \|_{-1,h} \leq C \| f \|_2$, as well as the refinement constraint $C_1 h \le s \le C_2 h$, have been repeatedly used. Going back to \eqref{proof lemma con}, we have
\begin{equation}
\|\tilde{\phi}^{n+1}\|_2+\|\nabla_h \tilde{\phi}^{n+1}\|_2 \leq C(s^\frac{11}{8}+h^\frac{11}{8}) \leq \hat{C}(s^\frac{5}{4}+h^\frac{5}{4}), 
\end{equation}
under the linear refinement requirement $C_1 h \le s \le C_2 h$, provided that $s$ and $h$ are sufficiently small. In addition, $\hat{C}$ depends on the physical parameters, while it is independent on $s$ and $h$. This inequality is exactly the rough error estimate \eqref{rough convergence}. The proof of Lemma \ref{rough convergence lemma} is complete. 

\section{Proof of Lemma \ref{nonlinear error term}}\label{appendix B}
We focus on the nonlinear term $\ln ( 1 + \hat{\Phi}^{n+1} ) - \ln ( 1 + \phi^{n+1} )$. The other terms could be similarly analyzed. The decomposition identity \eqref{proof lemma 7} is still valid. Considering a single mesh cell, we make the following observation
\begin{equation} 
\begin{aligned}  
  &
  D_x (  \ln ( 1 + \hat{\Phi}^{n+1} ) - \ln ( 1 + \phi^{n+1} ) )_{i+\frac{1}{2},j,k} 
  = D_x (  \frac{1}{1 + \eta^{(n+1)} } \cdot \tilde{\phi}^{n+1} )_{i+\frac{1}{2},j,k}  
  =: {\cal NLE}_1 + {\cal NLE}_2 , 
\\
  &
  {\cal NLE}_1 = 
   \frac{1}{1 + \eta^{(n+1)}_{i+1,j,k} }  D_x \tilde{\phi}^{n+1}_{i+\frac{1}{2},j,k}  ,  \quad 
   {\cal NLE}_2 
  = \tilde{\phi}^{n+1}_{i,j,k}  D_x (  \frac{1}{1 + \eta^{(n+1)} } )_{i+\frac{1}{2},j,k}   .   
\end{aligned} 
  \label{nonlinear est-1}   
\end{equation} 
The bound for the first nonlinear expansion is straightforward: 
\begin{align} 
  & 
   0 < \frac{1}{1 + \eta^{(n+1)}_{i+1,j,k} } \le  ( \frac{1}{2} \epsilon_0^* )^{-1} 
   = 2 (\epsilon_0^*)^{-1} ,  \quad 
   \mbox{(by~\eqref{separation property hat}, \eqref{separation property numerical})} , 
   \label{nonlinear est-2-1}      
\\
  & 
  \mbox{so that} \quad 
   \| \frac{1}{1 + \eta^{(n+1)} }   \|_\infty \le 2 (\epsilon_0^*)^{-1} ,   
    \label{nonlinear est-2-2}      
\\
  &   
   \|  {\cal NLE}_1  \|_2 \le \| \frac{1}{1 + \eta^{(n+1)} }   \|_\infty 
   \cdot \| D_x \tilde{\phi}^{n+1} \|_2 \le 2 (\epsilon_0^*)^{-1}  \| D_x \tilde{\phi}^{n+1} \|_2 .  
   \label{nonlinear est-2-3}       
\end{align} 
Meanwhile, by the decomposition identity~\eqref{proof lemma 7}, we denote 
\begin{equation} 
  e_{i,j,k} = \hat{\Phi}^{n+1}_{i,j,k} - \eta^{(n+1)}_{i,j,k} .   \label{nonlinear est-3-1}       
\end{equation} 
It is clear that 
\begin{equation} 
  | e_{i,j,k} | \le | \hat{\Phi}^{n+1}_{i,j,k} - \phi^{n+1}_{i,j,k} | 
  = | \tilde{\phi}^{n+1}_{i,j,k} | ,  \quad \forall (i,j,k) .   \label{nonlinear est-3-2}       
\end{equation} 
By the discrete $\|\cdot\|_4$ rough estimation \eqref{convergence-rough-1.2}, an application of inverse inequation gives
\begin{equation} 
  \| \nabla_h e \|_4  \le \frac{ C \| e \|_4 }{h} 
  \le \frac{ C \| \tilde{\phi}^{n+1} \|_4 }{h}   \le \frac{C ( s^\frac{5}{4} + h^\frac{5}{4} ) }{h} 
  \le C ( s^\frac{1}{4} + h^\frac{1}{4} ) \le \frac{1}{2} ,   
  \label{nonlinear est-3-3}    
\end{equation} 
under the linear refinement requirement $C_1 h \le s \le C_2 h$, provide that $s$ and $h$ are sufficiently small. In turn, we see that
\begin{equation} 
\begin{aligned} 
  & 
     ( D_x \eta^{(n+1)} )_{i+\frac{1}{2},j,k}  = ( D_x \hat{\Phi}^{n+1} )_{i+\frac{1}{2},j,k}  
     -  ( D_x e )_{i+\frac{1}{2},j,k} , 
\\
  & 
     \| D_x \eta^{(n+1)} \|_4  \le \| D_x \hat{\Phi}^{n+1} \|_4  
     + \| D_x e \|_4  \le C^* + \frac{1}{2}. 
\end{aligned} 
   \label{nonlinear est-3-4}    
\end{equation}   
On the other hand, motivated by the following expansion 
\begin{equation} 
  D_x (  \frac{1}{1 + \eta^{(n+1)} } )_{i+\frac{1}{2},j,k}  
  = \frac{ - ( D_x \eta^{(n+1)} )_{i+\frac{1}{2},j,k} }{ ( 1+ \eta^{n+1}_{i,j,k} ) (1 +  \eta^{n+1}_{i+1,j,k} ) } , 
   \label{nonlinear est-3-5}    
\end{equation}   
we conclude that 
\begin{equation} 
  \| D_x (  \frac{1}{1 + \eta^{(n+1)} } ) \|_4   
  \le \max_{i,j,k} \frac{ 1 }{ ( 1 + \eta^{n+1}_{i,j,k} ) (1 + \eta^{n+1}_{i+1,j,k} ) } 
     \cdot \| D_x \eta^{(n+1)} \|_4   
     \le 2 ( \epsilon_0^* )^{-2}(C^* + \frac{1}{2}) ,   
   \label{nonlinear est-3-6}    
\end{equation}   
in which the phase separation estimates \eqref{separation property hat} and \eqref{separation property numerical} have been applied again. Then we arrive at 
\begin{equation} 
   \| {\cal NLE}_2 \|_2 
  \le   \| D_x (  \frac{1}{1 + \eta^{(n+1)} } ) \|_4 \cdot    \| \tilde{\phi}^{n+1} \|_4  
  \le 2 ( \epsilon_0^* )^{-2} \cdot C\| \tilde{\phi}^{n+1} \|_4 .  
   \label{nonlinear est-3-7}    
\end{equation}  
Subsequently, a combination of~\eqref{nonlinear est-2-3} and \eqref{nonlinear est-3-7} leads to 
\begin{equation} 
    \| D_x (  \ln ( 1 + \hat{\Phi}^{n+1} ) - \ln ( 1 + \phi^{n+1} ) ) \|_2 
    \le 2 (\epsilon_0^*)^{-1}  \| D_x \tilde{\phi}^{n+1} \|_2 
    +   2 ( \epsilon_0^* )^{-2} C \| \tilde{\phi}^{n+1} \|_4 . 
    \label{nonlinear est-4}    
\end{equation} 
Similar estimates could be derived in the $y$ and $z$ directions; the technical details are skipped for the sake of brevity: 
\begin{align} 
    & 
    \| D_y (  \ln ( 1 + \hat{\Phi}^{n+1} ) - \ln ( 1 + \phi^{n+1} ) ) \|_2 
    \le 2 (\epsilon_0^*)^{-1}  \| D_y \tilde{\phi}^{n+1} \|_2 
    +   2 ( \epsilon_0^* )^{-2} C \| \tilde{\phi}^{n+1} \|_4 , 
    \label{nonlinear est-5-1}    
\\
  & 
    \| D_z (  \ln ( 1 + \hat{\Phi}^{n+1} ) - \ln ( 1 + \phi^{n+1} ) ) \|_2 
    \le 2 (\epsilon_0^*)^{-1}  \| D_z \tilde{\phi}^{n+1} \|_2 
    +   2 ( \epsilon_0^* )^{-2} C \| \tilde{\phi}^{n+1} \|_4 .  
    \label{nonlinear est-5-2}   
\end{align}    
Therefore, a combination of~\eqref{nonlinear est-4}-\eqref{nonlinear est-5-2} yields 
\begin{equation} 
    \| \nabla_h (  \ln ( 1 + \hat{\Phi}^{n+1} ) - \ln ( 1 + \phi^{n+1} ) ) \|_2 
    \le 2 (\epsilon_0^*)^{-1}  \| \nabla_h \tilde{\phi}^{n+1} \|_2 
    +   2 \sqrt{3} ( \epsilon_0^* )^{-2} C \| \tilde{\phi}^{n+1} \|_4 . 
    \label{nonlinear est-6}    
\end{equation}  
A similar estimate could also derived for the error term of  $-\ln ( 1 - \hat{\Phi}^{n+1} ) + \ln ( 1 - \phi^{n+1} )$:       
\begin{equation} 
    \| \nabla_h (  - \ln ( 1 - \hat{\Phi}^{n+1} ) + \ln ( 1 - \phi^{n+1} ) ) \|_2 
    \le 2 (\epsilon_0^*)^{-1}  \| \nabla_h \tilde{\phi}^{n+1} \|_2 
    +   2 \sqrt{3} ( \epsilon_0^* )^{-2} C \| \tilde{\phi}^{n+1} \|_4 . 
    \label{nonlinear est-7}    
\end{equation}
Finally, a substitution of~\eqref{nonlinear est-6} and \eqref{nonlinear est-7} into the nonlinear error expansion \eqref{nonlinear error-1} results in the desired estimate \eqref{nonlinear error-2}.  This finishes the proof of Lemma \ref{nonlinear error term}.

	\bibliographystyle{plain}
	\bibliography{ref}

\end{document}